\newcommand*{\pplus}{\unitlength=1.5pt
\begin{picture}(3,3)
\linethickness{1.5pt}\put(1.5,0){\line(0,1){3}}
\put(0,1.5){\line(1,0){3}}\end{picture}}
\newtheorem{thm}{Theorem}
\newtheorem{cor}[thm]{Corollary}
\newtheorem{lem}[thm]{Lemma}
\newtheorem{pro}[thm]{Proposition}
\newtheorem*{thmA}{Theorem A}
\newtheorem*{thmB}{Theorem B}
\newcounter{step}
\theoremstyle{remark}
\newtheorem{rem}[thm]{Remark}
\newtheorem{exa}[thm]{Example}
\theoremstyle{definition}
\DeclareMathOperator{\RE}{\mathfrak{Re}}
\DeclareMathOperator{\supp}{supp}
\newcommand*{\cbb}{\mathbb C}
\newcommand*{\D}{\mathrm d}
\newcommand*{\dcal}{\mathcal D}
\newcommand*{\dz}[1]{\mathscr D(#1)}
\newcommand*{\E}{\mathrm e}
\newcommand*{\ebf}{{\boldsymbol e}}
\newcommand*{\fcal}{\mathcal F}
\newcommand*{\Ge}{\geqslant}
\newcommand*{\hh}{\mathcal H}
\newcommand*{\is}[2]{\langle#1,#2\rangle}
\newcommand*{\ima}{{\mathfrak{Im}}}
\newcommand*{\kk}{\mathcal K}
\newcommand*{\I}{\mathrm{i}}
\newcommand*{\Le}{\leqslant}
\newcommand*{\llangle}{\langle\mbox{\hspace{-.6ex}}\langle}
\newcommand*{\llbra}{[\mbox{\hspace{-.27ex}}[}
\newcommand*{\minuss}{\text{\tiny{$-$}}}
\newcommand*{\nfr}{\mathfrak N}
\newcommand*{\nul}{\{0\}}
\newcommand*{\okr}{\stackrel{\scriptscriptstyle{\mathsf{def}}}{=}}
\newcommand*{\pcal}{\mathcal P}
\newcommand*{\pluss}{\text{\tiny{$+$}}}
\newcommand*{\rbb}{{\mathbb R}}
\newcommand*{\rea}{{\mathfrak{Re}}}
\newcommand*{\rrangle}{\rangle\mbox{\hspace{-.6ex}}\rangle}
\newcommand*{\rrbra}{]\mbox{\hspace{-.27ex}}]}
\newcommand*{\scal}{\mathcal S}
\newcommand*{\sfr}{\mathfrak S}
\newcommand*{\xbf}{\boldsymbol x}
\newcommand*{\xfr}{\mathfrak X}
\newcommand*{\zbb}{\mathbb Z}
\newcommand*{\zfr}{\mathfrak Z}
\begin{document}
   \title[Extending positive definiteness]{Extending positive definiteness}

   \author[D.\ Cicho\'n]{Dariusz Cicho\'n}
\address{Instytut Matematyki, Uniwersytet Jagiello\'nski,
ul. {\L}ojasiewicza 6, PL-30348 Kra\-k\'ow} \email{Dariusz.Cichon@im.uj.edu.pl}
   \author[J.\ Stochel]{Jan Stochel}
\address{Instytut Matematyki, Uniwersytet Jagiello\'nski,
ul. {\L}ojasiewicza 6, PL-30348 Kra\-k\'ow} \email{Jan.Stochel@im.uj.edu.pl}
   \author[F.\ H.\ Szafraniec]{Franciszek Hugon Szafraniec}
\address{Instytut Matematyki, Uniwersytet Jagiello\'nski,
ul. {\L}ojasiewicza 6, PL-30348 Kra\-k\'ow} \email{Franciszek.Szafraniec@im.uj.edu.pl}
  \thanks{This work was partially supported by the KBN grant
2 P03A 037 024 and by the MNiSzW grant N201 026 32/1350.
The third author also would like to acknowledge an
assistance of the EU Sixth Framework Programme for the
Transfer of Knowledge ``Operator theory methods for
differential equations'' (TODEQ) \# MTKD-CT-2005-030042.}
   \subjclass{Primary 43A35, 44A60 Secondary 47A20, 47B20}
   \keywords{Positive definite mapping, completely positive mapping, completely
f-positive mapping, complex moment problem,
multidimensional trigonometric moment problem, truncated
moment problems, the 17th Hilbert problem, subnormal
operator, unitary power dilation for several contractions}
    \dedicatory{To the memory of M.G.\ Kre\u{\i}n (1907-1989)}
   \begin{abstract}
The main result of the paper gives criteria for
extendibility of sesquilinear form-valued mappings
defined on symmetric subsets of $*$-semi\-groups
to positive definite ones. By specifying this we obtain new solutions~ of:
    \begin{itemize}
    \item the truncated complex moment problem,
    \item the truncated multidimensional trigonometric moment problem,
    \item the truncated two-sided complex moment problem,
    \end{itemize}
as well as characterizations of unbounded subnormality
and criteria for the existence of unitary power dilation.
   \end{abstract}
   \maketitle
   \subsection*{Introduction}
In \cite{sz-n} a fairly general concept of
$*$-semigroups, which includes groups, $*$-algebras
and quite a number of instances in between, and
positive definite functions on them has been
originated by Sz.-Nagy. On the other hand, there are
different notions which are related to positive
definiteness:\ positivity of sequences in the theory of
moments and complete positivity in $C^*$-algebras.
Positivity understood in the sense of Marcel Riesz and
Haviland\,\footnote{\;For the reader's convenience we
formulate both the real and the complex variants of
the Riesz-Haviland theorem as Theorems A and B in
Appendix.} usually ensures the sequence to be a moment
one while complete positivity works for dilations
(Stinespring, and what is equivalent, Sz.-Nagy, see
\cite{sza} for the argument).

If one goes beyond $C^*$-algebras the two notions,
positive definiteness and complete positivity, still
make sense but are no longer equivalent. This happens
when one deals with unbounded operator valued
functions and moment problems on unbounded sets.
Therefore, there is a need of common treatment of
these by means of forms over $*$-semigroups, like in
\cite{ark}. The aforesaid cases are represented in our
paper by unbounded subnormal operators and the complex
moment problem. In addition to this we also consider
the ``bounded'' case of unitary dilations of several
operators and, what is related to, of the
multidimensional trigonometric moment problem.

A topic attracting attention of quite a number of
mathematicians is extendibility of functions to either
positive definite or completely positive ones. The
most classical result in this matter concerns
\underline{groups}. It states that every continuous
positive definite function on a closed subgroup of a
locally compact abelian (or compact, not necessarily
abelian) group extends to a continuous positive
definite function on the whole group (cf.\
\cite[Section 34.48(a)\&(c)]{h-r} and \cite[Theorem
3.16]{bur}). However, leaving the topological
requirements aside, each positive definite function on
a subgroup of a group $G$ extends by zero to a
positive definite function on the whole $G$ (cf.\
\cite[Section 32.43(a)]{h-r}). This procedure is no
longer applicable to $*$-semigroups other than
groups. What is more, not every positive definite
function on a $*$-subsemigroup extends to a positive
definite function on the whole $*$-semigroup. This is
best exemplified by the interplay between the
$*$-semigroups $\nfr \subset \nfr_\pluss$ as shown in
\cite{st-sz1} in connection with the complex moment
problem (see Example \ref{nag} and Section \ref{s4}
for the definitions).

The situation becomes more complicated when one wants
to extend positive definiteness from subsets of more
relaxed structure, even in the case of groups. The
classical result of Krein \cite{krein} on automatic
extendibility of continuous positive definite
functions from a symmetric interval to the whole real
line suggests that \underline{symmetricity} of the
subset may be essential. This is somehow confirmed by
\cite{st-sz2} which contains a full characterization
of several contractions having commuting unitary
dilations. For more discussion of the role played by
symmetry we refer to Section \ref{bsym}. The results
contained in \cite{ba,ba-ti} and \cite{brudo} also
corroborate the importance of symmetry, the latter
 concerns extensions to indefinite forms with finite number of
negative squares.

One of the main ideas of the present paper is to
employ generalized polynomial functions to the
extendibility criteria invented in \cite{st-sz1} and
\cite{st-sz2}. What we get is strictly related to
complete positivity of associated linear mappings. The
original contribution consists in introducing complete
f-positivity (cf.\ Theorems \ref{main2} and
\ref{main2+}). This results, in particular, in the
complex variant of the Riesz-Haviland theorem (cf.\
Theorem \ref{zespmom}); the complete f-positivity is
now written in terms of positivity of the associated
linear functional on the set of all finite sums of
squares of moduli of very special rational functions
in variables $z$ and $\bar z$.

Carefully selected applications are chosen as follows.
Considering determining subsets of $\xfr_{\nfr_+}$
allows us to apply Theorems \ref{main2} and
\ref{main2+} to the complex moment problem (Theorem
\ref{zespmom}) as well as to subnormal operators
(Theorem \ref{chsub}). Theorem \ref{zespmom} can be
thought of as a truncated moment problem, however not
in the usual sense of \underline{finite} sections.
Analogous results are formulated for the truncated
multidimensional trigonometric moment problem and the
truncated two-sided complex moment problem (cf.\
Theorems \ref{trmpr} and \ref{zespmom+}). Moreover,
Theorem \ref{nagy} contains a new characterization of
finite systems of Hilbert space operators admitting
unitary power dilations.

Section \ref{appr} deals, inter alia, with
approximation of nonnegative polynomials in
indeterminates $z$ and $\bar z$ by sums of finitely
many squares of moduli of rational functions that are
bounded on a neighbourhood of the origin which is
assumed to be their only possible singularity (cf.\
Proposition \ref{dens1}). This can be compared to
Artin's solution of the 17th Hilbert problem stating
that every nonnegative polynomial in $z$ and $\bar z$
is a sum of finitely many squares of moduli of (a
priori arbitrary) rational functions. The above
approximation is no longer possible when considered on
proper closed subsets of $\cbb$, cf.\ Proposition
\ref{sumk} (see also Proposition \ref{sumk+} for the
case of multivariable trigonometric polynomials).
Similar approximation holds for multivariable
trigonometric polynomials (cf.\ Proposition
\ref{dens2}). A more detailed discussion relating the
theme to selected recent articles \cite{dr,g-w} is
contained in Section \ref{appr}.
   \section*{\sc General criteria}
Besides keeping $\zbb$, $\rbb$, $\cbb$ for standard sets of
integer, real and complex numbers, respectively, by $\zbb_\pluss$
we understand the set $\{0,1,2, \ldots\}$. Moreover, we adopt the
notation $\mathbb T = \{z \in \cbb \colon |z| =1\}$ and $\cbb_* =
\cbb \setminus \nul$. As usual, $\chi_\sigma$ stands for the
characteristic function of $\sigma$, a subset of a set
$\varOmega$. A system $\{z_\omega\}_{\omega \in \varOmega}$ of
complex numbers is said to be {\em finite} if the set $\{\omega
\in \varOmega \colon z_\omega \neq 0\}$ is finite.
   \subsection{\label{s1}Polynomials on dual $*$-semigroup}
Given a nonempty set $\varOmega$, we denote by
$\cbb^\varOmega$ the complex $*$-algebra of all complex
functions on $\varOmega$ with the algebra operations
defined pointwisely and the involution
   \begin{align*}
f^*(\omega) = \overline{f(\omega)}, \quad f \in \cbb^\varOmega, \,
\omega \in \varOmega.
   \end{align*}
The following fact reveals the idea standing behind the
known characterization of linear independence of Laplace
transforms of elements of a commutative semigroup (see
\cite{h-z} and \cite[Proposition 6.1.8]{b-ch-r} for a
pattern of the proof).
   \begin{lem} \label{hew}
Let $\varOmega$ be a nonempty set and let $Y\subset\cbb^\varOmega$
be a semigroup $($with respect to the multiplication of
$\cbb^\varOmega$$)$ containing the constant function $1$. Then the
following conditions are equivalent{\rm :}
   \begin{enumerate}
   \item[(i)] $Y$ separates points of $\varOmega$
$($equivalently{\em :} if $\widehat{\omega_1}|_Y =
\widehat{\omega_2}|_Y$, then $\omega_1 = \omega_2$$)$,
   \item[(ii)] the system $\{\widehat \omega|_Y\}_{\omega
\in \varOmega}$ is linearly independent in $\cbb^Y$,
   \end{enumerate}
where for $\omega \in \varOmega$ the function $\widehat \omega
\colon \cbb^\varOmega \to \cbb$ is defined by $\widehat \omega (f)
= f(\omega)$, $f \in \cbb^\varOmega$.
   \end{lem}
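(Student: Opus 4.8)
The plan is to prove the two implications separately, with essentially all of the work residing in $(i)\Rightarrow(ii)$. The implication $(ii)\Rightarrow(i)$ is immediate: if $\omega_1\neq\omega_2$ yet $\widehat{\omega_1}|_Y=\widehat{\omega_2}|_Y$, then $\widehat{\omega_1}|_Y-\widehat{\omega_2}|_Y=0$ is a nontrivial vanishing linear combination of the system $\{\widehat\omega|_Y\}_{\omega\in\varOmega}$, contradicting its linear independence. Equivalently, linear independence forces distinct points of $\varOmega$ to yield distinct restricted evaluations, which is precisely the point-separating property stated in $(i)$.

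For $(i)\Rightarrow(ii)$ I would fix finitely many distinct points $\omega_1,\dots,\omega_n\in\varOmega$ together with scalars $c_1,\dots,c_n\in\cbb$ satisfying $\sum_{k=1}^n c_k\,\widehat{\omega_k}|_Y=0$, i.e. $\sum_{k=1}^n c_k f(\omega_k)=0$ for every $f\in Y$, and aim to deduce $c_1=\dots=c_n=0$. The first observation is that this identity is linear in $f$, so it persists for every $f$ in the linear span $A$ of $Y$ inside $\cbb^\varOmega$. Since $Y$ is a semigroup under pointwise multiplication and contains the constant function $1$, the span $A$ is a unital subalgebra of $\cbb^\varOmega$, and by $(i)$ it still separates the points $\omega_1,\dots,\omega_n$. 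Thus the problem is reduced to a finite, purely algebraic interpolation statement.

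The decisive step, which I expect to be the main obstacle, is precisely this interpolation: the evaluation map $A\ni f\mapsto(f(\omega_1),\dots,f(\omega_n))\in\cbb^n$ has as its range a subalgebra $B$ of $\cbb^n$ that contains $(1,\dots,1)$ and separates the $n$ coordinates. One shows such a $B$ must be all of $\cbb^n$: for each pair $j\neq k$ choose $b^{(j)}\in B$ with $b^{(j)}_k\neq b^{(j)}_j$, normalise it (via an affine combination with the constant, which stays in $B$) to an element of $B$ that equals $1$ at coordinate $k$ and $0$ at coordinate $j$, and multiply these elements over all $j\neq k$; the resulting product lies in $B$ and equals the $k$-th coordinate vector. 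Pulling this back through the evaluation map produces $f_1,\dots,f_n\in A$ with $f_k(\omega_j)=\delta_{jk}$. Substituting $f=f_k$ into $\sum_j c_j f(\omega_j)=0$ then gives $c_k=0$ for each $k$, which is exactly the linear independence claimed in $(ii)$. It is worth noting where the hypotheses enter: the semigroup assumption is used to ensure that $A$ is closed under the products appearing in the interpolation, while the presence of the constant $1$ supplies the constants needed for the normalisation step.
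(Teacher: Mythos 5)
Your proof is correct. Note, however, that the paper does not actually write out a proof of Lemma \ref{hew}: it only points to \cite{h-z} and \cite[Proposition 6.1.8]{b-ch-r} "for a pattern of the proof", and that pattern is the classical Dedekind--Artin independence-of-characters argument. There one observes that each $\widehat\omega|_Y$ is multiplicative on the semigroup $Y$ with $\widehat\omega(1)=1$, takes a shortest nontrivial vanishing relation $\sum_{k=1}^n c_k\widehat{\omega_k}|_Y=0$, picks $g\in Y$ with $g(\omega_1)\neq g(\omega_2)$, and compares the relation evaluated at $fg$ with $g(\omega_1)$ times the relation evaluated at $f$ to produce a strictly shorter nontrivial relation, a contradiction (the constant $1$ only being needed to settle the case $n=1$). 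Your route is genuinely different: instead of the inductive shortening trick you pass to the unital subalgebra $A=\operatorname{span}Y$, push it forward under the evaluation homomorphism into $\cbb^n$, and show that a unital coordinate-separating subalgebra of $\cbb^n$ is all of $\cbb^n$ by explicitly manufacturing the coordinate idempotents; the dual system $f_k(\omega_j)=\delta_{jk}$ then kills the coefficients one at a time. Both arguments use exactly the same hypotheses (closure under products and the presence of $1$); the Dedekind-style proof is shorter and works verbatim for characters with values in any field, while yours is more constructive, exhibiting an explicit interpolating family in $\operatorname{span}Y$, and is essentially the finite-set Stone--Weierstrass argument. Either is a complete and acceptable proof of the lemma.
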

   If the $Y$ in Lemma \ref{hew} is not semigroup, then
the implication (i)$\Rightarrow$(ii) may not longer be true (cf.\
Example \ref{nag}).

As far as abstract semigroups are concerned, we adhere
to the multiplicative notation. A mapping $\sfr \ni s
\longmapsto s^* \in \sfr$ defined on a semigroup
$\sfr$ is called an {\it involution} if $(st)^* = t^*
s^*$ and $(s^*)^* = s$ for all $s,t \in \sfr$. A
semigroup $\sfr$ equipped with an involution is said
to be a {\it $*$-semigroup}. It is clear that if
$\sfr$ has a unit $\varepsilon$, then $\varepsilon^* =
\varepsilon$. The set $\{s \in \sfr \colon s=s^*\}$ of
all {\em Hermitian elements}\label{hermi} of a
$*$-semigroup $\sfr$ is denoted by $\sfr_{\mathfrak
h}$. For a nonempty subset $T$ of a $*$-semigroup
$\sfr$, we write $T^* = \{s^* \colon s \in T\}$; $T$
is said to be {\em symmetric} if $T=T^*$. Put $[T] =
\bigcup_{n=1}^\infty T^{[n]}$, where $T^{[n]}$ stands
for the set of all products of length $n$ with factors
in $T$. The set $[T]$ is the smallest subsemigroup of
$\sfr$ containing $T$. Under the assumption
of commutativity of $\sfr$, the set $\llbra T \rrbra
\okr \{u^*v \colon u, v \in [T]\}$ is a
$*$-subsemigroup of $\sfr$ which does not have
to contain any of the sets $T$ and $T^*$. Neither $[T]$ nor $\llbra T \rrbra$ has to contain the unit of $\sfr$ even if it exists.

Let $\sfr$ be a commutative $*$-semigroup with a unit
$\varepsilon$. A function $\chi\colon \sfr \to \cbb$
is called a {\it character} of $\sfr$ if
   \allowdisplaybreaks
   \begin{align} \label{chi1}
& \chi(st) = \chi(s) \chi(t), \quad s,t \in \sfr,
   \\
& \chi(s^*) = \overline {\chi(s)}, \quad s \in \sfr, \label{chi2}
   \\
& \chi(\varepsilon) = 1. \label{chi3}
   \end{align}
The set $\xfr_\sfr$ of all characters of $\sfr$ is a
$*$-semigroup with respect to the multiplication and
the involution of $\cbb^\sfr$; $\xfr_\sfr$ is called a
{\em dual} $*$-semigroup of $\sfr$. For every $s \in
\sfr$, we define the function $\hat s \colon \xfr_\sfr
\to \cbb$, modelled on the Fourier, Gelfand or Laplace
transform, via
   \begin{align*}
\hat s(\chi) = \chi(s), \quad \chi \in \xfr_\sfr.
   \end{align*}
The set $\{\hat s \colon s \in \sfr\}$ will be denoted by $\widehat
\sfr$. It follows from \eqref{chi1}, \eqref{chi2} and \eqref{chi3}
that $\widehat{st} = \hat s \cdot \hat t$ and $\widehat{s^*} =
\overline{\hat s}$ for all $s,t \in \sfr$, and $\hat\varepsilon
\equiv 1$. This means that $\widehat \sfr$ is a $*$-semigroup with
respect to the multiplication and the involution of
$\cbb^{\xfr_\sfr}$.

The set $\fcal_\sfr$ of all complex functions on $\sfr$ vanishing
off finite sets is a complex $*$-algebra with pointwise defined
linear algebra operations, the algebra multiplication of
convolution type
   $$(f \star g) (u) = \sum_{\substack {s,t \in \sfr \\ u = s
t}} f(s) g(t), \quad f,g \in \fcal_\sfr, \; u \in \sfr,
   $$
   and involution
   $$ f^* (s) = \overline {f(s^*)}, \quad f \in \fcal_\sfr, \; s
\in
\sfr.
   $$
For a nonempty subset $T$ of $\sfr$, we define the linear subspace
$\fcal_{\sfr,T}$ of $\fcal_\sfr$ via
   \begin{align*}
\fcal_{\sfr,T} = \{f \in \fcal_\sfr \colon f \text{ vanishes off
the set } T\}.
   \end{align*}
With the algebra operations defined above, $\fcal_{\sfr,T}$ is a
subalgebra of $\fcal_\sfr$ if and only if $T$ is a subsemigroup of
$\sfr$; what is more, $\fcal_{\sfr,T}$ is a symmetric subset of
$\fcal_\sfr$ if and only if $T$ is a symmetric subset of $\sfr$.
The reader should be aware of the fact that if $\sfr$ is finite,
then though the sets $\fcal_\sfr$ and $\cbb^\sfr$ coincide their
$*$-algebra structures differ unless $\sfr$ is a singleton.

Given a nonempty subset $Y$ of $\xfr_\sfr$, we write
$\pcal (Y)$ for the linear span of $\{\hat s|_Y \colon
s \in \sfr\}$ in $\cbb^Y$. Clearly, $\pcal(Y)$ is a
$*$-subalgebra of the $*$-algebra $\cbb^Y$. Notice
that there exists a unique $*$-algebra epimorphism
$\varDelta_Y \colon \fcal_\sfr \to \pcal(Y)$ such that
   \begin{align*}
\varDelta_Y(\delta_s) = \hat s|_Y, \quad s \in \sfr,
   \end{align*}
where $\delta_s \in \fcal_\sfr$ is the characteristic
function of $\{s\}$. For a nonempty subset $T$ of
$\sfr$, we write
$\pcal_T(Y)=\varDelta_Y(\fcal_{\sfr,T})$; the linear
space $\pcal_T(Y)$ coincides with the linear span of
$\{\hat s|_Y \colon s \in T\}$.

   In this paper we are interested in the case in
which the $*$-algebra homomorphism $\varDelta_Y$ is
injective. The following is a direct consequence of
Lemma \ref{hew}.
   \begin{pro}\label{char}
If $Y$ is a subsemigroup of $\xfr_\sfr$ containing the constant
function $1$, then the following conditions are equivalent
   \begin{enumerate}
   \item[(i)] $\varDelta_Y$ is a $*$-algebra isomorphism,
   \item[(ii)] the system $\{\hat s|_Y\}_{s \in \sfr}$
is linearly independent in $\cbb^Y$,
   \item[(iii)] $Y$ separates the points of $\sfr$
$($equivalently{\em :} if $\hat s|_Y = \hat t|_Y$, then $s =
t$$)$.
   \end{enumerate}
   \end{pro}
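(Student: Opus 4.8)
The plan is to deduce the whole statement directly from Lemma~\ref{hew}, applied with $\varOmega = \sfr$ and with the lemma's semigroup taken to be $Y$ itself: since $Y$ is a subsemigroup of $\xfr_\sfr$ and $\xfr_\sfr \subset \cbb^\sfr$, we have $Y \subset \cbb^\sfr$, a semigroup (under the multiplication of $\cbb^\sfr$) containing the constant function $1$, which is exactly the hypothesis of Lemma~\ref{hew} with $\varOmega = \sfr$. The crucial bookkeeping step is to match the two families of ``hatted'' evaluation maps living over different ambient sets. Namely, for every $s \in \sfr$ the function $\hat s|_Y$ appearing in the Proposition coincides with the function $\widehat\omega|_Y$ from Lemma~\ref{hew} taken at $\omega = s$: both assign to $\chi \in Y$ the value $\chi(s)$. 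Once this identification is recorded, condition (ii) of the Proposition becomes verbatim condition (ii) of Lemma~\ref{hew}, while ``$Y$ separates the points of $\sfr$'' in (iii) is precisely ``$Y$ separates points of $\varOmega$'' in (i) of the lemma, together with the same equivalent reformulation in terms of $\hat s|_Y = \hat t|_Y \Rightarrow s = t$. Hence Lemma~\ref{hew} yields the equivalence (ii)$\Leftrightarrow$(iii) with no further work.

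It remains to link (i) with (ii). Here I would use that $\varDelta_Y \colon \fcal_\sfr \to \pcal(Y)$ is, by construction, a surjective $*$-algebra homomorphism determined by $\varDelta_Y(\delta_s) = \hat s|_Y$ for $s \in \sfr$. Since $\{\delta_s \colon s \in \sfr\}$ is a linear (Hamel) basis of $\fcal_\sfr$, a standard linear-algebra observation applies: the linear map $\varDelta_Y$ is injective if and only if the images $\{\varDelta_Y(\delta_s)\}_{s \in \sfr} = \{\hat s|_Y\}_{s \in \sfr}$ of this basis form a linearly independent family in $\cbb^Y$, which is exactly condition (ii). As $\varDelta_Y$ is already onto $\pcal(Y)$, injectivity is equivalent to $\varDelta_Y$ being a $*$-algebra isomorphism, that is, to (i). Combining this with the equivalence of the previous paragraph closes the cycle.

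I do not anticipate a genuine obstacle, the statement being flagged as a direct consequence of Lemma~\ref{hew}; the only point demanding care is the correct identification of the evaluation functionals across the two ambient spaces $\xfr_\sfr$ and $\sfr$, and the explicit use of the fact that $\{\delta_s\}_{s \in \sfr}$ is a basis of $\fcal_\sfr$ so that injectivity of $\varDelta_Y$ translates into linear independence of its values on that basis.
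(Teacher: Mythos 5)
Your proposal is correct and follows exactly the route the paper intends: the paper gives no written proof beyond declaring the proposition ``a direct consequence of Lemma \ref{hew}'', and your application of that lemma with $\varOmega=\sfr$ and $Y\subset\cbb^\sfr$, together with the standard observation that the epimorphism $\varDelta_Y$ is injective precisely when the images $\{\hat s|_Y\}_{s\in\sfr}$ of the basis $\{\delta_s\}_{s\in\sfr}$ are linearly independent, supplies precisely the details the authors left implicit.
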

   \subsection{Determining sets}
A nonempty subset $Y$ of $\xfr_\sfr$ is called {\em determining
$($\/for $\pcal(\xfr_\sfr)$}) if it satisfies any of the equivalent
conditions (i) and (ii) of Proposition \ref{char}. If a subset $Y$
of $\xfr_\sfr$ is determining, then the mapping
   \begin{align*}
\pi_{T,Y} \colon \pcal_T(\xfr_\sfr) \ni w \mapsto w|_Y \in
\pcal_T(Y)
   \end{align*}
is a well defined $*$-algebra isomorphism (but not conversely, see
the next section).

It may happen that the whole dual $*$-semigroup $\xfr_\sfr$ does
not separate the points of $\sfr$ and consequently $\xfr_\sfr$ is
not determining (cf.\ \cite[Remarks 4.6.9\,(1)]{b-ch-r}). If $Y$
is not a subsemigroup of $\xfr_\sfr$, then implication
(iii)$\Rightarrow$(ii) of Proposition \ref{char} may fail to hold
(implication (ii)$\Rightarrow$(iii) is always true). To see this,
we shall discuss a $*$-semigroup introduced in 1955 by Sz.-Nagy
\cite{sz-n} for which the set $\pcal(\xfr_\sfr)$ can be
interpreted as the ring of all polynomials in $z$ and $\bar z$.
   \begin{exa} \label{nag}
Equip the Cartesian product $\zbb_\pluss \times
\zbb_\pluss$ with coordinatewise defined addition as
semigroup operation, i.e.\ $(i,j) + (k,l) = (i+k,j+l)$, and
involution $(m,n)^*=(n,m)$. The $*$-semigroup so obtained
will be denoted by $\nfr$. If $\cbb$ is thought of as a
$*$-semigroup with multiplication as semigroup operation
and complex conjugation as involution, then the mapping
   \begin{align*}
\xfr_\nfr \ni \chi \mapsto \chi(1,0) \in \cbb,
   \end{align*}
being a $*$-semigroup isomorphism, enables us to identify
algebraically $\xfr_\nfr$ with $\cbb$. Under this identification,
we have
   \begin{align} \label{1+}
\widehat{(m,n)} (z) = z^m \bar z^n, \quad m,n \in
\zbb_\pluss,\, z \in \cbb.
   \end{align}
Note that by \eqref{1+} and Lemma \ref{1} below the
system $\big\{\widehat{(m,n)}\big\}_ {(m,n) \in \nfr}$
is linearly independent in $\cbb^\cbb$. Hence, the set
$\pcal(\xfr_\nfr)$ can be thought of as the ring
$\cbb[z,\bar z]$ of all complex polynomials in $z$ and
$\bar z$.

It turns out that it is possible to give satisfactory
description of all subset of $\cbb$ separating the points
of $\nfr$.
   \begin{pro}\label{separuje}
A subset $Y$ of $\cbb$ separates the points of $\nfr$
if and only if the following two conditions hold{\em
:}
   \begin{enumerate}
   \item[(i)] $Y \not\subset \mathbb T \cup \nul$,
   \item[(ii)] $Y_1 \okr \big\{ \frac z{|z|}
\colon z \in Y\setminus \nul \big\} \not \subset \{w\in\cbb \colon
w^\varkappa=1\}$ for every integer $\varkappa \Ge 1$.
   \end{enumerate}
   \end{pro}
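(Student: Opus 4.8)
The plan is to reduce the separation of points of $\nfr$ to relations between the moduli and the arguments of the points of $Y$. Writing a nonzero $z\in\cbb$ in polar form $z=r\E^{\I\theta}$ (with $r=|z|>0$), formula \eqref{1+} gives $\widehat{(m,n)}(z)=r^{m+n}\E^{\I(m-n)\theta}$, so for $(m,n),(k,l)\in\nfr$ the identity $\widehat{(m,n)}(z)=\widehat{(k,l)}(z)$ splits, by comparison of moduli and arguments, into the two independent conditions
\[
r^{a}=1\quad\text{and}\quad \E^{\I c\theta}=1,\qquad a=(m+n)-(k+l),\ \ c=(k-l)-(m-n).
\]
Since $r>0$ these are genuinely independent, and $(m,n)=(k,l)$ holds exactly when $a=c=0$. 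At the point $0$ (if it lies in $Y$) the monomial $\widehat{(m,n)}$ takes the value $1$ for the unit $(0,0)$ and $0$ otherwise, so $0$ can only separate the unit from the remaining elements and will be irrelevant to the argument below.

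First I would record the two subgroups of $\zbb$ collecting the radial and angular relations holding on $Y$, namely
\[
A=\{a\in\zbb\colon r^{a}=1\text{ for all } z=r\E^{\I\theta}\in Y\setminus\nul\},\qquad
C=\{c\in\zbb\colon w^{c}=1\text{ for all } w\in Y_1\}.
\]
Two elementary remarks do the work. If some nonzero point of $Y$ has modulus $\neq 1$, then $r^{a}=1$ forces $a=0$, so $A=\{0\}$; otherwise every nonzero point lies on $\mathbb T$ and $A=\zbb$. Thus condition (i) is exactly $A=\{0\}$. On the other hand $C$, as a subgroup of $\zbb$, equals $\varkappa_0\zbb$ for a unique $\varkappa_0\Ge 0$, and $C=\{0\}$ means precisely that no $\varkappa\Ge 1$ satisfies $w^{\varkappa}=1$ throughout $Y_1$; that is, (ii) is exactly $C=\{0\}$.

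It then remains to show that $Y$ separates the points of $\nfr$ if and only if $A=\{0\}$ and $C=\{0\}$. For sufficiency, suppose a distinct pair $(m,n)\neq(k,l)$ agreed on all of $Y$; it would in particular agree on the set $Y\setminus\nul$, which is nonempty by (i), and each nonzero point yields both $r^{a}=1$ and $\E^{\I c\theta}=1$, whence $a\in A=\{0\}$ and $c\in C=\{0\}$. This gives $a=c=0$ and $(m,n)=(k,l)$, a contradiction. For necessity I would argue contrapositively: if $A\neq\{0\}$, so $Y\setminus\nul\subset\mathbb T$, then the distinct monomials $\widehat{(2,2)}$ and $\widehat{(1,1)}$ coincide on $Y$; and if $C\neq\{0\}$, choosing $\varkappa\Ge 1$ with $w^{\varkappa}=1$ on $Y_1$, the distinct monomials $\widehat{(0,\varkappa)}$ and $\widehat{(\varkappa,0)}$ coincide on $Y$ (they agree since $w^{2\varkappa}=1$ on $Y_1$). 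In either case $Y$ fails to separate.

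The only real care needed is bookkeeping rather than depth: not every $(a,c)\in\zbb^2$ comes from a pair in $\nfr$, since $a+c=2(n-l)$ is forced to be even, and one must also keep the unit and the point $0$ from interfering. Both issues are handled by selecting the explicit witnessing monomials above with even $a$ and $c$ and away from the unit, so no genuine obstacle remains beyond this matching.
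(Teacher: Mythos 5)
Your proof is correct and follows essentially the same route as the paper's: split $\widehat{(m,n)}(z)=\widehat{(k,l)}(z)$ at nonzero points into a modulus condition (handled by (i)) and an argument condition (handled by (ii)), and in the converse direction exhibit explicit coinciding monomial pairs with even exponent differences. Your packaging via the subgroups $A$ and $C$ of $\zbb$ and your particular witnesses $(2,2)$ vs.\ $(1,1)$ and $(0,\varkappa)$ vs.\ $(\varkappa,0)$ are cosmetic variants of the paper's $(2,1)$ vs.\ $(1,0)$ and $(2\varkappa,0)$ vs.\ $(\varkappa,\varkappa)$.
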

   \begin{proof}
Suppose that $Y$ does not satisfy the conjunction of conditions
(i) and (ii). Then either $Y \subset \mathbb T\cup\nul$, and hence
$z^2\bar z = z$ for all $z\in Y$, or $Y_1 \subset \{w\in\cbb
\colon w^\varkappa=1\}$ for some $\varkappa \Ge 1$, and hence
$z^{2\varkappa} = z^\varkappa \bar z^\varkappa$ for all $z\in Y$.
In both cases, $Y$ cannot separate the points of $\nfr$.

Assume now that $Y$ satisfies (i) and (ii). Take $m,n,k,l
\in \zbb_\pluss$ and suppose that $z^m \bar z^n = z^k \bar
z^l$ for all $z\in Y$. Taking absolute value of both sides
of the equality and employing (i) we get
   \begin{align}\label{m+n}
j\okr m+n = k+l.
   \end{align}
Dividing both sides of $z^m \bar z^n = z^k \bar z^l$ by
$|z|^j$ gives $w^\varkappa =1$ for all $w \in Y_1$ with
$\varkappa = m-n - (k-l)$. By (ii) this implies that
$\varkappa = 0$, which when combined with \eqref{m+n} leads
to $(m,n)=(k,l)$. The proof is complete.
   \end{proof}
We now indicate a class of subsets $Y$ of $\xfr_\nfr$ which
separate the points of $\nfr$ but which are not determining
for $\pcal(\xfr_\nfr) = \cbb[z,\bar z]$. Take a nonzero
polynomial $p \in \cbb[z,\bar z]$ such that the set
   \begin{align}\label{y-p}
   Y_p \okr \{z \in \cbb \colon p(z,\bar z) = 0\}
   \end{align}
is nonempty. Then evidently the functions
$\{\widehat{(m,n)}|_{Y_p} \colon m,n \in \zbb_\pluss\}$ are
linearly dependent in $\cbb^{Y_p}$, and hence $Y_p$ is not
determining for $\cbb[z,\bar z]$. Let us focus on the case
of circles and straight lines (which are always of the form
\eqref{y-p}). It follows from Proposition \ref{separuje}
that the ensuing sets do not separate the points of $\nfr$:
   \begin{enumerate}
   \item[--] the unit circle $\mathbb T$ centered at the origin,
   \item[--] a straight line $L$ such that $0\in L$ and the
points of $L\cap \mathbb T$ are complex $\varkappa$-roots
of $1$ for some (necessarily even) integer $\varkappa \Ge 2$.
   \end{enumerate}
All the other circles and straight lines do separate the
points of $\nfr$ (in many cases they embrace $1$).
Surprisingly, one point sets, which are still of the form
\eqref{y-p}, may separate the points of $\nfr$. Indeed, by
Proposition \ref{separuje}, if $z \in \cbb \setminus
(\mathbb T \cup \nul)$ and $\frac z{|z|}$ is not a complex
$\varkappa$-root of $1$ for any integer $\varkappa \Ge 1$,
then $\{z\}$ does separate the points of $\nfr$.
   \end{exa}
Contrary to the case of sets separating the points of
$\nfr$, one cannot expect any neat description of all
determining sets for $\cbb[z,\bar z]$. Nevertheless, we may
indicate some determining sets explicitly (see also Lemma
\ref{niepust}).
   \begin{lem} \label{1} Suppose that
$Y \subset \cbb$ is either a union of infinitely many
parallel straight lines or a union of infinitely many
concentric circles. If $\{a_{m,n}\}_{m,n \in \zbb}$
is a finite system of complex numbers such that
$\sum_{m,n \in \zbb} a_{m,n} z^m \bar z^n = 0$ for
all $z \in Y \setminus \nul$, then $a_{m,n} = 0$ for
all $m,n \in \zbb$.
   \end{lem}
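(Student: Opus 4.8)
The plan is to reduce the two-sided (Laurent) statement to the case of ordinary polynomials and then treat the two geometric configurations separately, in each case turning ``vanishing on the curves'' into a genuine polynomial identity in two independent variables. The first move is to clear the negative exponents: since $z^K \bar z^K = |z|^{2K}$ does not vanish on $\cbb \setminus \nul$, multiplying the hypothesis by $z^K \bar z^K$ with $K$ large enough to make every exponent nonnegative changes neither the zero set on $Y \setminus \nul$ nor (after relabelling) the question of whether all coefficients vanish. Hence I may assume throughout that $\sum_{m,n} a_{m,n} z^m \bar z^n$ is an ordinary polynomial in $z$ and $\bar z$, i.e.\ $m,n \Ge 0$.

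For the parallel lines, write the common direction as $\gamma \in \mathbb T$, so that the line at height $b$ is $\{\gamma(x+\I b) \colon x \in \rbb\}$ and the hypothesis ranges over infinitely many values of $b$. On such a line $z = \gamma(x+\I b)$ and $\bar z = \bar\gamma(x - \I b)$, so the assumption reads $\sum_{m,n} a_{m,n}\gamma^m \bar\gamma^n (x+\I b)^m (x-\I b)^n = 0$. For fixed $b$ this is a polynomial in the real variable $x$ vanishing at all but at most one value (the possible excluded point $z=0$), hence identically; then for fixed $x$ it is a polynomial in $b$ vanishing at infinitely many values, hence identically. Thus the two-variable polynomial $\sum_{m,n} a_{m,n}\gamma^m\bar\gamma^n (x+\I b)^m(x-\I b)^n$ is identically zero, and passing to the independent coordinates $p=x+\I b$, $q = x - \I b$ turns it into $\sum_{m,n} a_{m,n}\gamma^m\bar\gamma^n p^m q^n \equiv 0$. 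Linear independence of the monomials $p^m q^n$, together with $\gamma \neq 0$, forces $a_{m,n}=0$.

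For the concentric circles with centre $c$, on the circle of radius $r$ I would use $(z-c)(\bar z - \bar c) = r^2$ to substitute $\bar z = \bar c + r^2/(z-c)$. After multiplying by $(z-c)^N$ with $N = \max\{n\colon a_{m,n}\neq 0\}$, the expression becomes a genuine polynomial in $z$ that vanishes at all but at most one point of the circle, hence identically in $z$. Writing $t=r^2$ and letting $r$ run through the infinitely many radii, each coefficient (in $z$) is a polynomial in $t$ with infinitely many roots, so $\Phi(z,t) := \sum_{m,n} a_{m,n} z^m (z-c)^{N-n}(\bar c(z-c)+t)^n$ vanishes identically in $(z,t)$. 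Extracting the coefficient of the top power $t^N$ leaves exactly $\sum_m a_{m,N} z^m$, which must then be the zero polynomial; this contradicts the maximality of $N$ unless every $a_{m,n}$ vanishes.

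The step I expect to be the main obstacle is the circle case, because the circles are allowed to be off-centre and one cannot separate radial and angular parts as cleanly as in the origin-centred situation. The real work is in choosing the substitution and clearing denominators so that vanishing on a single circle becomes a polynomial identity in $z$, and then in exploiting the infinitude of radii through a second ``a polynomial with infinitely many roots is zero'' argument in the auxiliary variable $t=r^2$; the line case, by contrast, collapses immediately once the right affine change of variables $p=x+\I b$, $q=x-\I b$ is used.
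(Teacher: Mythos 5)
Your proof is correct and follows essentially the same route as the paper's: clear the negative exponents by multiplying by $z^K\bar z^K$, then exploit the infinitude of lines or circles through a two-stage ``polynomial with infinitely many roots'' argument. The only difference is one of packaging: the paper first rotates (resp.\ translates) to put the lines parallel to the real axis (resp.\ the circles' centre at the origin) and then appeals to a ``well known identity principle'' for polynomials in $z$ and $\bar z$, whereas you work in general position and make that principle explicit --- via the linear change of variables $p=x+\I b$, $q=x-\I b$ for the lines, and the substitution $\bar z=\bar c+r^2/(z-c)$ followed by the degree count in $t=r^2$ for the circles.
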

   \begin{proof}
Since the set $\{(m,n) \in \zbb \times \zbb \colon
a_{m,n} \neq 0\}$ is finite, there exists an integer
$N \Ge 1$ such that $a_{m,n}=0$ for all integers $m,n$
such that $m < -N$ or $n < -N$. This implies that
$p(z) = \sum_{m,n \Ge -N} a_{m,n} z^{m+N} \bar
z^{n+N}$ is a complex polynomial in $z$ and $\bar z$
vanishing on $Y$. In the case of straight lines, we
can always find $\theta \in (-\pi/2,\pi/2]$ such that
the polynomial $p(\E^{\I \theta}z)$ vanishes on a
union of infinitely many straight lines parallel to
the real axis. Next, considering the complex
polynomial $p(\E^{\I \theta}(x+ \I y))$ in two real
variables $x$ and $y$, we deduce that $p(z)=0$ for all
$z\in \cbb$. In the other case, applying a suitable
translation of the argument, we can assume that the common
center of the circles is in the origin. Employing a
well known identity principle for complex polynomials
in $z$ and $\bar z$ completes the proof in both cases.
   \end{proof}
   \subsection{Complete positivity}
Let $\sfr$ be a unital commutative $*$-semigroup, $T$ be a
nonempty subset of $\sfr$ and $Y$ be a nonempty subset of
$\xfr_\sfr$. Denote by $\pcal_T(Y,\ell^2)$ the set of all
functions from $Y$ to $\ell^2$ of the form $\chi\mapsto\sum_{s\in
T} \chi(s) x_s$, where $\{x_s\}_{s\in T} \subset \ell^2$ and
$x_s=0$ for all but a finite number of $s$'s. Note that $\pcal_T
(Y,\ell^2)$ can be thought of as the algebraic tensor product
$\pcal_T(Y) \otimes \ell^2$. We abbreviate $\pcal_\sfr (Y,\ell^2)$
to $\pcal(Y,\ell^2)$. The standard notation $[a_{k,l}]_{k,l=1}^m
\Ge 0$ is used for {\em nonnegativity} of the scalar matrix
$[a_{k,l}]_{k,l=1}^m$.

For the linear space
   \begin{align*}
{\mathrm M}^m(\pcal_T(Y)) & \okr \big\{[w_{k,l}]_{k,l=1}^m \colon
w_{k,l} \in \pcal_T(Y) \text{ for all } k,l = 1, \ldots, m \big\},
\quad m\Ge1,
   \end{align*}
   its subsets
    \begin{align*}
{\mathrm M}_\pluss^m(\pcal_T(Y)) &\okr \big\{[w_{k,l}]_{k,l=1}^m
\in {\mathrm M}^m(\pcal_T(Y)) \colon [w_{k,l}(\chi)]_{k,l=1}^m \Ge
0 \text{ for all } \chi \in Y\big\}
\end{align*}
and
\begin{align*}
{\mathrm M}_{\mathrm f}^m(\pcal_T(Y)) & \okr {\mathrm
M}^m(\pcal_T(Y))\cap \big\{[\llangle p_k, p_l \rrangle]_{k,l=1}^m
\colon p_1, \ldots, p_m \in \pcal(Y,\ell^2) \big\},
   \end{align*}
where $\llangle p_k, p_l \rrangle$ is the function $\chi
\mapsto \is{p_k(\chi)} {p_l(\chi)}_{\ell^2}$, turn out to be
convex cones. Note that if $p_1, \ldots, p_m \in \pcal(Y,\ell^2)$,
then $[\llangle p_k, p_l \rrangle]_{k,l=1}^m \in {\mathrm
M}_\pluss^m(\pcal(Y))$. This implies that ${\mathrm
M}_{\mathrm f}^m(\pcal_T(Y)) \subset {\mathrm
M}_\pluss^m(\pcal_T(Y))$. One can also check that the square of each matrix
from ${\mathrm M}_\mathrm{f}^m(\pcal_T(Y))$ (respectively
${\mathrm M}_\pluss^m(\pcal_T(Y))$) belongs to ${\mathrm
M}_\mathrm{f}^m(\pcal_{T^{[2]}}(Y))$ (respectively
${\mathrm M}_\pluss^m(\pcal_{T^{[2]}}(Y))$). What is more, if $Y$ is a determining subset of $\xfr_\sfr$,
then the mappings
   \allowdisplaybreaks
   \begin{gather} \notag
\pcal(\xfr_\sfr,\ell^2) \ni p \mapsto p|_Y \in \pcal(Y,\ell^2),
\\
\pi_{T,Y}^{(m)} \colon {\mathrm
M}^m(\pcal_T(\xfr_\sfr)) \ni [w_{k,l}]_{k,l=1}^m
\longmapsto [w_{k,l}|_Y]_{k,l=1}^m \in {\mathrm M}
^m(\pcal_T(Y)), \label{pimty}
   \end{gather}
are linear isomorphisms and consequently
   \begin{align} \label{pitym}
\pi_{T,Y}^{(m)} ({\mathrm M}_{\mathrm f}^m(\pcal_T(\xfr_\sfr))) =
{\mathrm M}_{\mathrm f}^m(\pcal_T(Y)).
   \end{align}
   \begin{lem} \label{factor}
If $p_1, \ldots, p_m \in \pcal(Y,\ell^2)$ and $A = [\llangle p_k,
p_l \rrangle]_{k,l=1}^m$, then there exists a matrix
$P=[p_{k,j}]_{k=1}^m{}_{j=1}^n$ with entries $p_{k,j} \in \pcal(Y)$
such that $A = PP^*$, where $P^* = [\bar
p_{j,k}]_{k=1}^n{}_{j=1}^m$. Conversely, if $P$ is a matrix of size
$m \times n$ with entries in $\pcal(Y)$, then $PP^* = [\llangle p_k,
p_l \rrangle]_{k,l=1}^m $ with some $p_1, \ldots, p_m \in
\pcal(Y,\ell^2)$. In particular, if $p \in \pcal(Y,\ell^2)$, then
$\llangle p, p \rrangle = \sum_{j=1}^n |q_j|^2$ with some $q_j \in
\pcal(Y)$.
   \end{lem}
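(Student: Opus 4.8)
The plan is to exploit the fact that finitely many elements of $\pcal(Y,\ell^2)$ have coefficient vectors spanning only a finite-dimensional subspace of $\ell^2$; choosing an orthonormal basis there turns the $\ell^2$-valued functions into finite vectors of scalar functions in $\pcal(Y)$, after which the identity $A = PP^*$ is a direct Gram-matrix computation.

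First I would write each $p_k$ as $p_k(\chi) = \sum_{s} \chi(s)\, x_s^{(k)}$ with $x_s^{(k)} \in \ell^2$ vanishing off a finite set. Since there are only finitely many functions $p_1, \ldots, p_m$, a single finite set $F \subset \sfr$ serves all of them, and the finitely many vectors $\{x_s^{(k)} \colon s \in F,\ k = 1, \ldots, m\}$ span a finite-dimensional subspace of $\ell^2$. Fixing an orthonormal basis $e_1, \ldots, e_n$ of this subspace, I would expand $x_s^{(k)} = \sum_{j=1}^n c_{s,j}^{(k)} e_j$ and set $p_{k,j}(\chi) = \sum_{s \in F} c_{s,j}^{(k)} \chi(s)$, so that each $p_{k,j}$ is a finite combination of the generators $\hat s|_Y$ and hence lies in $\pcal(Y)$, while $p_k(\chi) = \sum_{j=1}^n p_{k,j}(\chi)\, e_j$. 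Orthonormality of the $e_j$ then gives
\[
\llangle p_k, p_l \rrangle(\chi) = \sum_{j=1}^n p_{k,j}(\chi)\, \overline{p_{l,j}(\chi)} = (P P^*)_{k,l}(\chi),
\]
with $P = [p_{k,j}]$, which is exactly $A = PP^*$. The converse I would obtain by running this backwards: given $P = [p_{k,j}]$ with entries in $\pcal(Y)$, I would define $p_k(\chi) = \sum_{j=1}^n p_{k,j}(\chi)\, e_j$ for any fixed orthonormal system $e_1, \ldots, e_n$ in $\ell^2$; writing each $p_{k,j}$ as a finite combination of the $\hat s|_Y$ shows $p_k \in \pcal(Y,\ell^2)$, and the identical computation yields $PP^* = [\llangle p_k, p_l \rrangle]$. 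The final assertion is the case $m = 1$: then $P$ is a row $[q_1, \ldots, q_n]$ with $q_j \in \pcal(Y)$, so $\llangle p, p \rrangle = PP^* = \sum_{j=1}^n |q_j|^2$.

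There is no serious obstacle here; the one point deserving care is the passage to a common finite-dimensional coefficient space, namely that the countably many coordinates of $\ell^2$ may be treated as effectively finite once only finitely many $\ell^2$-vectors are in play. Everything else is bookkeeping, and it is precisely the orthonormality of the chosen basis that converts the inner-product (Gram) structure $\llangle p_k, p_l \rrangle$ into the matrix product $PP^*$.
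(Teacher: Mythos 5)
Your proposal is correct and follows essentially the same route as the paper: the paper also picks a finite orthonormal basis of a finite-dimensional subspace of $\ell^2$ containing the values of $p_1,\ldots,p_m$, expands $p_k=\sum_{j=1}^n p_{k,j}e_j$ with $p_{k,j}\in\pcal(Y)$, reads off $A=PP^*$ as a Gram-matrix identity, and obtains the converse by reversing the argument. Your only (harmless) variation is to span the finitely many $\ell^2$-coefficient vectors rather than the union of the ranges $p_k(Y)$.
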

   \begin{proof} Take $p_1, \ldots, p_m \in
\pcal(Y,\ell^2)$. Then there exists a finite orthonormal
basis $\{e_j\}_{j=1}^n$ of the linear span of
$\bigcup_{k=1}^m p_k(Y)$. As a consequence, $p_k =
\sum_{j=1}^n p_{k,j} e_j$ with some $p_{k,j} \in \pcal(Y)$,
and hence $P=[p_{k,j}]_{k=1}^m{}_{j=1}^n$ is the required
matrix. Reversing the above reasoning concludes the proof.
   \end{proof}
   We now examine the behavior of the classes ${\mathrm
M}_{\mathrm f}^m(\pcal_T(Y))$ and ${\mathrm
M}_\pluss^m(\pcal_T(Y))$ under the operation of transposing
their members.
   \begin{lem}\label{niska}
{\em (i)} If a matrix $[w_{k,l}]_{k,l=1}^m$ is a member of
${\mathrm M}_\pluss^m(\pcal_T(Y))$, then the transposed matrix
$[w_{l,k}]_{k,l=1}^m$ belongs to ${\mathrm
M}_\pluss^m(\pcal_{T^*}(Y))$.

{\em (ii)} If $p_1, \ldots, p_m \in \pcal_T(Y,\ell^2)$, then there
exist $q_1, \ldots, q_m \in \pcal_{T^*}(Y,\ell^2)$ such that
   \begin{align} \label{plqk}
\llangle p_l, p_k \rrangle = \llangle q_k, q_l \rrangle, \quad k,l
= 1, \ldots, m.
   \end{align}
   \end{lem}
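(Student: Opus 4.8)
The plan is to establish the two assertions separately, each resting on the single identity $\overline{\chi(s)} = \chi(s^*)$ supplied by \eqref{chi2}; restricted to $Y$ this reads $\overline{\hat s|_Y} = \widehat{s^*}|_Y$, so complex conjugation carries $\pcal_T(Y)$ onto $\pcal_{T^*}(Y)$. Both proofs are then direct computations.

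For part (i) I would first note that a pointwise-nonnegative matrix is in particular pointwise Hermitian, so $w_{l,k}(\chi) = \overline{w_{k,l}(\chi)}$ for every $\chi \in Y$. Hence, as a matrix of functions on $Y$, the transpose $[w_{l,k}]_{k,l=1}^m$ is the entrywise complex conjugate of $[w_{k,l}]_{k,l=1}^m$. Since the complex conjugate of a nonnegative scalar matrix is again nonnegative, evaluation at each $\chi \in Y$ gives $[w_{l,k}(\chi)]_{k,l=1}^m \Ge 0$. To locate the entries, I would write $w_{k,l} = \sum_{s \in T} c_s\, \hat s|_Y$ and conjugate, obtaining $w_{l,k} = \overline{w_{k,l}} = \sum_{s \in T} \overline{c_s}\, \widehat{s^*}|_Y \in \pcal_{T^*}(Y)$; together these two facts are exactly what membership in ${\mathrm M}_\pluss^m(\pcal_{T^*}(Y))$ requires.

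For part (ii) I would simply exhibit the $q_k$. Writing $p_k(\chi) = \sum_{s \in T} \chi(s)\, x_{k,s}$ with $\{x_{k,s}\}_{s \in T} \subset \ell^2$ a finite system, I set $q_k(\chi) = \sum_{s \in T} \chi(s^*)\, \overline{x_{k,s}}$, the bar denoting coordinatewise conjugation in $\ell^2$; after the reindexing $s \mapsto s^*$ this displays $q_k$ as an element of $\pcal_{T^*}(Y,\ell^2)$. Expanding the two sides of \eqref{plqk}, and using $\overline{\chi(s)} = \chi(s^*)$ and $\overline{\chi(s^*)} = \chi(s)$ from \eqref{chi2} together with $\is{\overline x}{\overline y}_{\ell^2} = \is{y}{x}_{\ell^2}$, both $\llangle p_l, p_k \rrangle(\chi)$ and $\llangle q_k, q_l \rrangle(\chi)$ collapse to the common expression $\sum_{s,t} \chi(s)\chi(t^*)\, \is{x_{l,s}}{x_{k,t}}_{\ell^2}$, which yields the required equality.

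No step is a genuine obstacle; the only care needed is the double bookkeeping of the conjugation, which must be applied both to the scalar factors $\chi(s)$ (converting $s$ into $s^*$ and thereby moving the support from $T$ to $T^*$) and to the $\ell^2$ vectors (flipping the order of the Hermitian inner product), so that the two effects combine to reproduce precisely the transposed Gram form.
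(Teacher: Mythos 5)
Your proof is correct and follows essentially the same route as the paper: part (i) is word-for-word the paper's argument (Hermitian entries plus \eqref{chi2}), and part (ii) rests on the same conjugation trick, the only cosmetic difference being that the paper expands $p_k = \sum_j p_{k,j}e_j$ in a finite orthonormal basis (as in Lemma \ref{factor}) and conjugates the scalar components $p_{k,j}$, whereas you conjugate the $\ell^2$ coefficient vectors $x_{k,s}$ directly. Both choices of $q_k$ work, and your bookkeeping of the two conjugations is accurate.
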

   \begin{proof} (i) By nonnegativity of $[w_{k,l}]_{k,l=1}^m$, we have
$w_{l,k}(\chi) = \overline{w_{k,l}(\chi)}$ for all $\chi
\in Y$ and $k,l = 1, \ldots, m$. Hence, the application of
\eqref{chi2} justifies (i).

   (ii) Arguing as in the proof of Lemma \ref{factor}, we
can write $p_k = \sum_{j=1}^n p_{k,j} e_j$ with some
$p_{k,j} \in \pcal_T(Y)$. Then
   \begin{align*}
\llangle p_l, p_k \rrangle (\chi) = \sum_{j=1}^n p_{l,j}
(\chi) \overline{p_{k,j} (\chi)} = \llangle q_k, q_l
\rrangle (\chi), \quad \chi \in Y,
   \end{align*}
where $q_k(\chi) = \sum_{j=1}^n \overline{p_{k,j}(\chi)}
e_j$ for $\chi \in Y$. By \eqref{chi2}, the functions $Y
\ni \chi \mapsto \overline{p_{k,j}(\chi)} \in \cbb$ belong
to $\pcal_{T^*}(Y)$. This completes the proof.
   \end{proof}
Let $F$ be a topological linear space and $\tau$ be
its topology. The closure of a subset $W$ of $F$ with
respect to $\tau$ is denoted by
$\overline{W}^{\,\tau}$. Given an integer $m\Ge 1$, we
write ${\mathrm M}^m(F)$ for the linear space (with
entrywise defined linear operations) of all $m$ by $m$
matrices with entries in $F$. Identifying ${\mathrm
M}^m(F)$ with the Cartesian product of $m^2$ copies of
$F$, we may regard ${\mathrm M}^m(F)$ as a topological
linear space with the product topology $\tau^{(m)}$.

Call a nonempty subset $Z$ of $\xfr_\sfr$ $T$-{\em
bounded}\/\footnote{\label{wysoka}\;Note that in view of Tychonoff's
theorem $\sfr$-bounded sets coincide with subsets of
$\xfr_\sfr$ which are relatively compact in the
topology of pointwise convergence on $\sfr$.} if
$\sup_{\chi \in Z} |\chi(s)| < \infty$ for every $s
\in T$. It is obvious that a nonempty subset $Z$ of
$\xfr_\sfr$ is $T$-bounded if and only if it is
$\langle T \rangle$-bounded, where $\langle T \rangle$
stands for the unital $*$-semigroup generated by $T$.
Note also that nonempty $Z \subset \xfr_\sfr$ is
$T$-bounded if and only if for every (equivalently:\
for some) integer $m \Ge 1$ and for every $w \in
{\mathrm M}^m(\pcal_T(Z))$, $\sup_{\chi \in Z}
\|w(\chi)\| < \infty$; here $\|w(\chi)\|$ stands for
the operator norm of the matrix $w(\chi)$. Denote by
$\tau_{T,Y}$ the locally convex topology on
$\pcal_T(Y)$ given by the system of seminorms $w
\mapsto \sup_{\chi \in Z} |w(\chi)|$ indexed by
$T$-bounded subsets $Z$ of $Y$. Observe that the
topology $\tau_{T,Y}^{(m)}$ on ${\mathrm
M}^m(\pcal_T(Y))$ is identical with the locally convex
topology given by the system of seminorms $w \mapsto
\sup_{\chi \in Z} \|w(\chi)\|$ with $Z$ ranging over
all $T$-bounded subsets of $Y$. It is clear that the
topology $\tau_{T,Y}^{(m)}$ is stronger than the
topology of pointwise convergence on $Y$. If
$\xfr_\sfr$ is equipped with the topology of pointwise
convergence on $\sfr$, then $\tau_{T,Y}^{(m)}$ is
still stronger than the topology of uniform
convergence on compact subsets of $Y$ (see Footnote \ref{wysoka}). In turn, if
there exist integers $i,j,k,l \Ge 0$ such that $i+j
\neq k+l$ and
   \begin{align} \label{i+j}
s^{*i}s^j=s^{*k}s^l,\quad s\in T,
   \end{align}
then $\sup_{\chi \in \xfr_\sfr } |\chi(s)| \Le 1$ for every $s \in
T$. Hence, in this specific situation, the topology
$\tau_{T,Y}^{(m)}$ describes exactly the uniform convergence on
$Y$. Property \eqref{i+j} holds for any $T$ which is a unital
commutative inverse semigroup (take $i=l=1$, $j=2$ and $k=0$, cf.\
\cite{cl-pr1}); in particular, this is the case for $T$ being an
abelian group with involution $s^*=s^{-1}$.

Below we stick to the notations declared at the beginning of this section.
   \begin{lem}\label{closure}
${\mathrm M}_{\mathrm f}^m(\pcal_T(Y)) \subset
{\mathrm M}_\pluss^m(\pcal_T(Y)) \subset
\overline{{\mathrm M}_{\mathrm f}^m(\pcal_{\llbra T
\rrbra}(Y))}^{\,\tau};$ $\tau = \tau_{\llbra T
\rrbra,Y}^{(m)}$.
   \end{lem}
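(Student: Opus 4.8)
The plan is to treat the two inclusions separately, the first being essentially routine and the second carrying all the content. For the first, if $[w_{k,l}]_{k,l=1}^m\in{\mathrm M}_{\mathrm f}^m(\pcal_T(Y))$ then, as already observed before the statement, for each $\chi\in Y$ the matrix $[w_{k,l}(\chi)]_{k,l=1}^m$ is a Gram matrix in $\ell^2$, hence nonnegative, while its entries lie in $\pcal_T(Y)$ by hypothesis; this is exactly membership in ${\mathrm M}_\pluss^m(\pcal_T(Y))$. So I would concentrate on the second inclusion and approximate a given $W=[w_{k,l}]_{k,l=1}^m\in{\mathrm M}_\pluss^m(\pcal_T(Y))$ by matrices of the form $RR^*$, where $R$ is a polynomial in $W$ built from a polynomial approximant of the square root that is arranged to have vanishing constant term.

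Two preliminary observations set the stage. First I would note that a set $Z\subset Y$ is $T$-bounded if and only if it is $\llbra T\rrbra$-bounded: for $u,v\in[T]$ one has $|\chi(u^*v)|=|\chi(u)|\,|\chi(v)|$, while conversely $s^*s\in\llbra T\rrbra$ together with $\chi(s^*s)=|\chi(s)|^2$ recovers $T$-boundedness. Hence the seminorms defining $\tau=\tau_{\llbra T\rrbra,Y}^{(m)}$ range over exactly those $Z$ on which, by the boundedness characterisation recalled in the text, $\sup_{\chi\in Z}\|W(\chi)\|<\infty$. Second, since $w_{k,l}\in\pcal_T(Y)$ and $\widehat{st}=\hat s\cdot\hat t$, every power of $W$ has entries $(W^j)_{k,l}\in\pcal_{T^{[j]}}(Y)\subset\pcal_{[T]}(Y)$ for $j\Ge1$.

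With this in hand I would fix a basic $\tau$-neighbourhood of $W$; it is controlled by finitely many $\llbra T\rrbra$-bounded sets, whose union $Z$ is again $\llbra T\rrbra$-bounded, so it suffices to approximate $W$ to within a prescribed $\epsilon$ uniformly on $Z$. Put $R=\sup_{\chi\in Z}\|W(\chi)\|<\infty$ and choose a real polynomial $r$ with $r(0)=0$ and $\sup_{t\in[0,R]}|r(t)^2-t|<\epsilon$; such an $r$ exists because $\sqrt{\,\cdot\,}$ is uniformly approximable by polynomials on $[0,R]$ and subtracting the value at $0$ (which tends to $\sqrt0=0$) kills the constant term without harming the approximation. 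Setting $R_r\okr r(W)$, the second observation and $r(0)=0$ give that every entry of $R_r$ lies in $\pcal_{[T]}(Y)$. Writing $p_k\okr\sum_{p=1}^m(R_r)_{k,p}\,e_p\in\pcal_{[T]}(Y,\ell^2)$ one has $R_rR_r^*=[\llangle p_k,p_l\rrangle]_{k,l=1}^m$, whose $(k,l)$ entry is a linear combination of functions $\hat u\cdot\overline{\hat v}=\hat u\cdot\widehat{v^*}=\widehat{uv^*}$ with $u,v\in[T]$; by commutativity $uv^*=v^*u\in\llbra T\rrbra$, so these entries lie in $\pcal_{\llbra T\rrbra}(Y)$ and therefore $R_rR_r^*\in{\mathrm M}_{\mathrm f}^m(\pcal_{\llbra T\rrbra}(Y))$. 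Convergence then follows from the functional calculus: as $r$ is real and $W(\chi)$ Hermitian, $R_r(\chi)R_r(\chi)^*=r(W(\chi))^2$, so with the spectrum of $W(\chi)$ contained in $[0,R]$ for $\chi\in Z$,
\[
\sup_{\chi\in Z}\bigl\|R_r(\chi)R_r(\chi)^*-W(\chi)\bigr\|\Le\sup_{t\in[0,R]}\bigl|r(t)^2-t\bigr|<\epsilon .
\]

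The step I expect to be the crux is the bookkeeping of which function algebra the factors inhabit. A naive square-root substitute $r(W)$ would carry a nonzero constant term, placing a multiple of the identity on the diagonal; but $\llbra T\rrbra$ (and already $[T]$) need not contain the unit $\varepsilon$, so constants are unavailable and the approximant would fall outside $\pcal_{\llbra T\rrbra}(Y)$. Imposing $r(0)=0$ is exactly what repairs this. The companion point is that the products in $R_rR_r^*$ must close back into $\pcal_{\llbra T\rrbra}(Y)$, which is where commutativity enters through $uv^*=v^*u$. Everything else—the existence of the approximating polynomial, the power computation, and the spectral estimate—is routine.
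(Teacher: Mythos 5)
Your proof is correct and takes essentially the same route as the paper's: approximate the square root of the nonnegative matrix by real polynomials with vanishing constant term (so that no constant function, which may be unavailable in $\pcal_{\llbra T\rrbra}(Y)$, is produced) and conclude by the spectral estimate uniformly over a $\llbra T\rrbra$-bounded set. The only cosmetic difference is that the paper first passes to the transpose $v$ of $w$, which lies in ${\mathrm M}_\pluss^m(\pcal_{T^*}(Y))$, and factors through the columns $\rho_n(v)e_k\in\pcal_{[T^*]}(Y,\ell^2)$, whereas you factor $r(W)$ through its rows, landing the vector-valued factors in $\pcal_{[T]}(Y,\ell^2)$; either way the entries of the approximants end up in $\pcal_{\llbra T\rrbra}(Y)$.
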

   \begin{proof}
Only the second inclusion has to be justified.
Consider a $\llbra T \rrbra$-bounded subset $Z$ of $Y$ and take $w =
[w_{k,l}]_{k,l=1}^m\in {\mathrm
M}_\pluss^m(\pcal_T(Y))$. By Lemma \ref{niska},
the transpose $v$ of $w$ is a member of ${\mathrm
M}_\pluss^m(\pcal_{T^*}(Y))$. Note also that
$\|v(\chi)\| = \|w(\chi)\|$ for every $\chi \in Y$,
and consequently, $M \okr \sup_{\chi \in Z}
\|v(\chi)\| < \infty$. Without loss of generality we
can assume that $M > 0$. By the Weierstrass theorem,
there exists a sequence of real polynomials $\{
\rho_n\}_{n=1}^\infty$ vanishing at $0$ which tends to
the square root function uniformly on $[0,M]$. For
$\chi \in Y$, we denote by $\sqrt{v(\chi)}$ the square
root of $v(\chi)$. As for every $\chi \in Z$ the norm
of the nonnegative matrix $v(\chi)$ is less than or
equal to $M$, we get
   \begin{align}\label{spec}
\sup_{\chi \in Z} \| \sqrt{v(\chi)} - \rho_n(v(\chi))\| \Le
\sup_{x\in [0,M]} |\sqrt x - \rho_n(x)|, \quad n \Ge 1.
   \end{align}
Let $\{e_k\}_{k=1}^m$ be the standard `0\,-1' basis of
$\cbb^m$. Write $\rho_n(v)e_k$ for the function $Y \ni
\chi \to \rho_n(v(\chi))e_k \in \ell^2$ (after a
natural identification). Since $\rho_n(0)=0$, we see
that $\rho_n(v) e_k \in \pcal_{[T^*]}(Y,\ell^2)$ and
hence $[\llangle \rho_n(v) e_k, \rho_n(v)e_l
\rrangle]_{k,l=1}^m \in {\mathrm M}_{\mathrm
f}^m(\pcal_{\llbra T \rrbra}(Y))$. It follows from
\eqref{spec} that $[\llangle \rho_n(v) e_k,
\rho_n(v)e_l \rrangle]_{k,l=1}^m$ converges uniformly
on $Z$ to $\tilde v= [\tilde v_{k,l}]_{k,l=1}^m$ as $n
\to \infty$, where $\tilde v_{k,l}(\chi) \okr
\is{\sqrt{v(\chi)}e_k}{\sqrt{v(\chi)}{e_l}}$ for $\chi
\in Y$. Since
   \begin{align*}
\tilde v_{k,l}(\chi) = \is{v(\chi)e_k}{e_l} = v_{l,k}(\chi) =
w_{k,l}(\chi), \quad \chi \in Y, \, k,l = 1, \ldots, m,
   \end{align*}
and the class of $\llbra T \rrbra$-bounded subsets of $Y$ is directed upwards by
inclusion, the proof is complete.
   \end{proof}
   \begin{rem} \label{onie}
By the proof of Lemma \ref{closure}, every matrix in ${\mathrm
M}_\pluss^m(\pcal_T(Y))$ can be approximated in the
topology $\tau_{\llbra T \rrbra,Y}^{(m)}$ by means of
matrices of the form $[\llangle
p_k,p_l\rrangle]_{k,l=1}^m \in {\mathrm M}_{\mathrm
f}^m(\pcal_{\llbra T \rrbra}(Y))$, where
$p_1,\ldots,p_m \in \pcal_{[T^*]}(Y,\ell^2)$.
      \end{rem}
Let $\dcal$ be a complex linear space. Denote by $\scal(\dcal)$
the set of all sesquilinear forms on $\dcal$. For every integer $m
\Ge 1$, we define:
   \allowdisplaybreaks
   \begin{align*}
\mathrm M^m(\scal(\dcal)) & = \big\{[\phi_{k,l}]_{k,l=1}^m \colon
\phi_{k,l} \in \scal (\dcal) \text{ for all } k,l=1, \ldots, m
\big\},
   \\
\mathrm M_\pluss^m(\scal(\dcal)) & = \big\{[\phi_{k,l}]_{k,l=1}^m
\in \mathrm M^m(\scal(\dcal)) \colon [\phi_{k,l}]_{k,l=1}^m \gg
0\big\},
   \end{align*}
where the notation $[\phi_{k,l}]_{k,l=1}^m \gg 0$ means that
   \begin{align*}
\sum_{k,l=1}^m \phi_{k,l} (h_k,h_l) \Ge 0 \quad \text{for all }
h_1, \ldots, h_m \in \dcal.
   \end{align*}
We say that a mapping $\varPsi\colon \sfr \to \scal(\dcal)$ is
{\em positive definite} if
   \begin{align*}
\sum_{k,l=1}^m \varPsi(s_l^*s_k)(h_k,h_l) \Ge 0
   \end{align*}
for every integer $m \Ge 1$ and for all $s_1, \ldots, s_m \in
\sfr$ and $h_1, \ldots, h_m \in \dcal$.

Suppose that $Y$ is a determining subset of $\xfr_\sfr$. Then for
every mapping $\varPhi\colon T \to \scal(\dcal)$, there exists a
unique linear mapping $\varLambda_{\varPhi,Y} \colon \pcal_T(Y)
\to \scal(\dcal)$ such that
   \begin{align} \label{1maja}
\varLambda_{\varPhi,Y}(\hat s|_Y) = \varPhi(s), \quad s \in T.
   \end{align}
We say that the mapping $\varLambda_{\varPhi,Y}$ is {\em
completely positive} if for every integer $m \Ge 1$,
   \begin{align*}
\varLambda_{\varPhi,Y}^{(m)}(\mathrm{M}_\pluss^m(\pcal_T(Y)))
\subset \mathrm{M}_\pluss^m(\scal(\dcal)),
   \end{align*}
where $\varLambda_{\varPhi,Y}^{(m)} \colon
\mathrm{M}^m(\pcal_T(Y)) \to \mathrm M^m(\scal(\dcal))$ is a
linear mapping defined by
   \begin{align*}
\varLambda_{\varPhi,Y}^{(m)}([w_{k,l}]_{k,l=1}^m) =
[\varLambda_{\varPhi,Y}(w_{k,l})]_{k,l=1}^m, \quad
[w_{k,l}]_{k,l=1}^m \in \mathrm{M}^m(\pcal_T(Y)).
   \end{align*}
$\varLambda_{\varPhi,Y}$ is said to be {\em completely f-positive}
if for every integer $m \Ge 1$,
   \begin{align*}
\varLambda_{\varPhi,Y}^{(m)}(\mathrm{M}_{\mathrm f}^m(\pcal_T(Y)))
\subset \mathrm{M}_\pluss^m(\scal(\dcal)).
   \end{align*}
Apparently, complete positivity implies complete f-positivity. If
$Y=\xfr_\sfr$, we shall abbreviate $\varLambda_{\varPhi,Y}$
($\varLambda_{\varPhi,Y}^{(m)}$ respectively) to
$\varLambda_{\varPhi}$ ($\varLambda_{\varPhi}^{(m)}$
respectively).

We now show that the notion of complete f-positivity
of $\varLambda_{\varPhi,Y}$ does not depend on the
choice of determining set $Y$.
   \begin{pro} \label{row}
Suppose that $\sfr$ is a unital commutative
$*$-semigroup and $T$ is a nonempty subset of $\sfr$.
Let $\dcal$ be a complex linear space and
$\varPhi\colon T \to \scal(\dcal)$ be a mapping. If
$Y$ and $Z$ are determining subsets of $\xfr_\sfr$,
then $\varLambda_{\varPhi,Y}$ is completely f-positive
if and only if $\varLambda_{\varPhi,Z}$ is completely
f-positive.
   \end{pro}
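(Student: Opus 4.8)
The plan is to reduce everything to the language of matrix cones already developed, and to exploit the fact that the two determining sets $Y$ and $Z$ give isomorphic copies of the \emph{same} abstract object $\pcal_T(\xfr_\sfr)$. Recall that for a determining set, say $Y$, the restriction map $\pi_{T,Y}\colon \pcal_T(\xfr_\sfr)\to\pcal_T(Y)$ is a $*$-algebra isomorphism, and more importantly the matrix-level map $\pi_{T,Y}^{(m)}$ of \eqref{pimty} is a linear isomorphism carrying $\mathrm M_{\mathrm f}^m(\pcal_T(\xfr_\sfr))$ onto $\mathrm M_{\mathrm f}^m(\pcal_T(Y))$, by \eqref{pitym}. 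The same holds with $Z$ in place of $Y$. So both $\varLambda_{\varPhi,Y}$ and $\varLambda_{\varPhi,Z}$ factor through the canonical mapping $\varLambda_{\varPhi}\colon\pcal_T(\xfr_\sfr)\to\scal(\dcal)$ attached to the \emph{full} dual semigroup, via $\varLambda_{\varPhi,Y}=\varLambda_{\varPhi}\circ\pi_{T,Y}^{-1}$ and likewise for $Z$.

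First I would verify this factorization precisely. By \eqref{1maja} applied to $Y=\xfr_\sfr$, the mapping $\varLambda_{\varPhi}$ is the unique linear map sending $\hat s$ to $\varPhi(s)$ for $s\in T$; and $\pi_{T,Y}^{-1}$ sends $\hat s|_Y$ to $\hat s$, so $\varLambda_{\varPhi}\circ\pi_{T,Y}^{-1}$ sends $\hat s|_Y$ to $\varPhi(s)$, whence by uniqueness it equals $\varLambda_{\varPhi,Y}$. The key structural identity is then obtained at the matrix level: for every $m\Ge1$,
\begin{align*}
\varLambda_{\varPhi,Y}^{(m)}=\varLambda_{\varPhi}^{(m)}\circ\big(\pi_{T,Y}^{(m)}\big)^{-1},
\end{align*}
which follows by applying the scalar identity entrywise to an arbitrary matrix $[w_{k,l}|_Y]_{k,l=1}^m\in\mathrm M^m(\pcal_T(Y))$.

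The heart of the argument is the following chain of equivalences, read off from the displayed definitions of complete f-positivity together with \eqref{pitym}. The map $\varLambda_{\varPhi,Y}$ is completely f-positive precisely when $\varLambda_{\varPhi,Y}^{(m)}(\mathrm M_{\mathrm f}^m(\pcal_T(Y)))\subset\mathrm M_\pluss^m(\scal(\dcal))$ for all $m$; substituting the factorization and then \eqref{pitym} rewrites the left-hand cone as
\begin{align*}
\varLambda_{\varPhi}^{(m)}\big(\big(\pi_{T,Y}^{(m)}\big)^{-1}(\mathrm M_{\mathrm f}^m(\pcal_T(Y)))\big)
=\varLambda_{\varPhi}^{(m)}\big(\mathrm M_{\mathrm f}^m(\pcal_T(\xfr_\sfr))\big),
\end{align*}
which \emph{no longer refers to $Y$}. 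Hence complete f-positivity of $\varLambda_{\varPhi,Y}$ is equivalent to $\varLambda_{\varPhi}^{(m)}(\mathrm M_{\mathrm f}^m(\pcal_T(\xfr_\sfr)))\subset\mathrm M_\pluss^m(\scal(\dcal))$ for all $m$, i.e.\ to complete f-positivity of the intrinsic map $\varLambda_{\varPhi}$. The identical computation with $Z$ gives the same intrinsic condition, and the two are therefore equivalent to each other.

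I expect the only genuine point requiring care — the main (mild) obstacle — to be the invocation of \eqref{pitym}: one must be sure that the cone $\mathrm M_{\mathrm f}^m$ of matrices of Gram type $[\llangle p_k,p_l\rrangle]$ is transported \emph{exactly}, not merely included, under the restriction isomorphism. This is guaranteed by the first displayed isomorphism in \eqref{pimty}, namely $p\mapsto p|_Y$ from $\pcal(\xfr_\sfr,\ell^2)$ onto $\pcal(Y,\ell^2)$, which shows that every $\ell^2$-valued polynomial on $Y$ lifts to one on $\xfr_\sfr$ inducing the same Gram matrix after restriction, so $\pi_{T,Y}^{(m)}$ indeed maps $\mathrm M_{\mathrm f}^m(\pcal_T(\xfr_\sfr))$ onto, not just into, $\mathrm M_{\mathrm f}^m(\pcal_T(Y))$. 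Everything else is the formal bookkeeping of composing linear isomorphisms and reading the definitions, so once the equality of cones is in hand the proof closes immediately.
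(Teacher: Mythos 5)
Your proposal is correct and follows essentially the same route as the paper: reduce to the case $Z=\xfr_\sfr$, establish the identity $\varLambda_{\varPhi,Y}^{(m)}\circ\pi_{T,Y}^{(m)}=\varLambda_\varPhi^{(m)}$ from \eqref{pimty} and \eqref{1maja}, and conclude via the equality of cones \eqref{pitym}. The only detail worth making explicit is that the existence of a determining subset $Y$ already forces $\xfr_\sfr$ itself to be determining, which is what legitimizes working with $\varLambda_\varPhi$ at all.
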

   \begin{proof}
The proof reduces to the case $Z=\xfr_\sfr$. By \eqref{pimty} and
\eqref{1maja} we have
   \begin{align*}
\varLambda_{\varPhi,Y}^{(m)} \circ \pi_{T,Y}^{(m)} =
\varLambda_\varPhi^{(m)}, \quad m=1,2, \ldots,
   \end{align*}
 which, together with \eqref{pitym}, implies the desired equivalence.
   \end{proof}
   For a complex linear space $\dcal$, we denote by
$\varrho_\dcal$ the locally convex topology on
$\scal(\dcal)$ given by the system of seminorms $\phi
\mapsto |\phi(f,g)|$ indexed by all the pairs $(f,g)
\in \dcal \times \dcal$. Clearly, the topology
$\varrho_\dcal$ is nothing else than that of pointwise
convergence on $\dcal \times \dcal$, and therefore it
can be regarded as an analogue of the weak operator
topology on the set of bounded linear operators on a
Hilbert space.
   \begin{pro} \label{cont}
Assume that $\sfr$ is a unital commutative
$*$-semigroup, $T$ is a nonempty subset of $\sfr$ such
that $\llbra T \rrbra \subset T$, and $Y$ is a
determining subsets of $\xfr_\sfr$. Let $\dcal$ be a
complex linear space and $\varPhi\colon T \to
\scal(\dcal)$ be a mapping. Suppose that the mapping
$\varLambda_{\varPhi,Y} \colon (\pcal_T(Y),
\tau_{T,Y}) \to (\scal(\dcal), \varrho_\dcal)$ is
continuous. Then $\varLambda_{\varPhi,Y}$ is
completely f-positive if and only if
$\varLambda_{\varPhi,Y}$ is completely positive.
   \end{pro}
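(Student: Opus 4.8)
The implication that complete positivity entails complete f-positivity needs no argument, since ${\mathrm M}_{\mathrm f}^m(\pcal_T(Y)) \subset {\mathrm M}_\pluss^m(\pcal_T(Y))$ for every $m \Ge 1$. Thus the content of the proposition is the reverse implication, and the plan is to pass from positivity on the cone ${\mathrm M}_{\mathrm f}^m$ to positivity on the larger cone ${\mathrm M}_\pluss^m$ by an approximation-and-limit argument. First I would fix $m \Ge 1$ and a matrix $w \in {\mathrm M}_\pluss^m(\pcal_T(Y))$, and invoke Lemma \ref{closure}: it realizes $w$ as a limit, in the topology $\tau = \tau_{\llbra T \rrbra,Y}^{(m)}$, of a net of matrices from ${\mathrm M}_{\mathrm f}^m(\pcal_{\llbra T \rrbra}(Y))$ (by Remark \ref{onie} one may even take them of the form $[\llangle p_k,p_l\rrangle]_{k,l=1}^m$ with $p_k \in \pcal_{[T^*]}(Y,\ell^2)$). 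Because $\llbra T \rrbra \subset T$ by hypothesis, we have $\pcal_{\llbra T \rrbra}(Y) \subset \pcal_T(Y)$, so every term of the approximating net already lies in ${\mathrm M}_{\mathrm f}^m(\pcal_T(Y))$; complete f-positivity then sends each term into ${\mathrm M}_\pluss^m(\scal(\dcal))$.

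The key point, and the step I would treat most carefully, is to reconcile the topology of the approximation with the topology in which $\varLambda_{\varPhi,Y}$ is assumed continuous. The continuity hypothesis concerns $\tau_{T,Y}$, whereas Lemma \ref{closure} delivers convergence in $\tau_{\llbra T \rrbra,Y}$. These match because the two associated classes of bounded sets coincide: a $T$-bounded subset of $Y$ is trivially $\llbra T \rrbra$-bounded since $\llbra T \rrbra \subset T$, while conversely, for $s \in T$ one has $s^*s \in \llbra T \rrbra$ and $\chi(s^*s) = |\chi(s)|^2$ by \eqref{chi1} and \eqref{chi2}, so boundedness of $|\chi(s^*s)|$ on a set forces boundedness of $|\chi(s)|$ there. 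Consequently the two families of seminorms agree, $\tau_{\llbra T \rrbra,Y}^{(m)}$ coincides with $\tau_{T,Y}^{(m)}$ on ${\mathrm M}^m(\pcal_T(Y))$, and in particular the approximating net converges to $w$ in $\tau_{T,Y}^{(m)}$ as well.

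It remains to pass to the limit under $\varLambda_{\varPhi,Y}^{(m)}$. Since this map acts entrywise and both sides carry the product topology, continuity of $\varLambda_{\varPhi,Y}\colon (\pcal_T(Y),\tau_{T,Y}) \to (\scal(\dcal),\varrho_\dcal)$ entails continuity of $\varLambda_{\varPhi,Y}^{(m)}\colon ({\mathrm M}^m(\pcal_T(Y)),\tau_{T,Y}^{(m)}) \to ({\mathrm M}^m(\scal(\dcal)),\varrho_\dcal^{(m)})$. Hence $\varLambda_{\varPhi,Y}^{(m)}(w)$ is the $\varrho_\dcal^{(m)}$-limit of the images of the net, each lying in ${\mathrm M}_\pluss^m(\scal(\dcal))$. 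Finally I would check that ${\mathrm M}_\pluss^m(\scal(\dcal))$ is $\varrho_\dcal^{(m)}$-closed: for fixed $h_1,\dots,h_m \in \dcal$ the functional $[\phi_{k,l}]_{k,l=1}^m \mapsto \sum_{k,l=1}^m \phi_{k,l}(h_k,h_l)$ is $\varrho_\dcal^{(m)}$-continuous, so requiring its value to lie in $[0,\infty)$ defines a closed set, and intersecting over all tuples shows the cone to be closed, hence preserved under limits. This gives $\varLambda_{\varPhi,Y}^{(m)}(w) \gg 0$, and therefore $\varLambda_{\varPhi,Y}^{(m)}({\mathrm M}_\pluss^m(\pcal_T(Y))) \subset {\mathrm M}_\pluss^m(\scal(\dcal))$ for every $m$, which is complete positivity. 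The only genuine obstacle is the topological bookkeeping above; once the two topologies are identified, the limiting argument is routine.
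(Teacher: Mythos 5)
Your proof is correct and follows essentially the same route as the paper's: approximate a matrix in ${\mathrm M}_\pluss^m(\pcal_T(Y))$ by matrices from ${\mathrm M}_{\mathrm f}^m(\pcal_{\llbra T\rrbra}(Y))\subset{\mathrm M}_{\mathrm f}^m(\pcal_T(Y))$ via Lemma \ref{closure}, pass to the limit using continuity of $\varLambda_{\varPhi,Y}^{(m)}$, and conclude by the $\varrho_\dcal^{(m)}$-closedness of ${\mathrm M}_\pluss^m(\scal(\dcal))$. The paper compresses all of this into two sentences; your reconciliation of $\tau_{\llbra T\rrbra,Y}$ with $\tau_{T,Y}$ via the identity $\chi(s^*s)=|\chi(s)|^2$ is exactly the detail hidden behind the paper's ``one can check,'' and it is carried out correctly.
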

   \begin{proof}
One can check that the assumed continuity of
$\varLambda_{\varPhi,Y}$ implies that of
   \begin{align*}
\varLambda_{\varPhi,Y}^{(m)} \colon \big({\mathrm
M}^m(\pcal_T(Y)), \tau_{T,Y}^{(m)}\big) \to
\big({\mathrm M}^m(\scal(\dcal)),
\varrho_\dcal^{(m)}\big), \quad m =1, 2, \ldots
   \end{align*}
Hence, by the $\varrho_\dcal^{(m)}$-closedness of $\mathrm M_\pluss^m(\scal(\dcal))$
and Lemma \ref{closure}, we arrive at the desired conclusion.
   \end{proof}
Regarding Proposition \ref{cont}, note that if $T$ is
a $*$-subsemigroup of a unital commutative
$*$-semigroup $\sfr$ ($T$ need not be unital), then
$\llbra T \rrbra \subset T$. The reverse implication
is not true in general. In fact, it may happen that
$\llbra T \rrbra \subset T$ although $T$ is not a
subsemigroup of $\sfr$ and $T \neq T^*$ (thus neither
$T \subset \llbra T \rrbra$ nor $T^* \subset \llbra T
\rrbra$ holds). We leave it to the reader to verify
that this is the case for the subset
   \begin{align*}
T \okr \{(k,l)\in\nfr \colon k\Ge 1,\, l\Ge 1 \} \cup
T_0
   \end{align*}
of the $*$-semigroup $\nfr$ considered in Example
\ref{nag}, where $T_0$ is a proper subset of
$\{(k,0)\in\nfr \colon k\Ge 1\}$ containing $(1,0)$.
   \subsection{\label{cfe}Criteria for extendibility}
As mentioned in Introduction, not every positive
definite function on $\nfr$ extends to a positive
definite function on the $*$-semigroup $\nfr_\pluss$ (see Example \ref{nag} and Section \ref{s4} for the definitions). However, if we impose a stronger positivity-like condition on a
function defined on $\nfr$ (in the language of \cite{bis-01} this is
positive definiteness with respect to $\nfr_\pluss$),
then it is extendable to a positive definite function
on $\nfr_\pluss$, and reversely (cf.\ \cite{st-sz1}). The property of
extendibility of positive definite functions was
characterized likewise in \cite{st-sz1} also in the
case of $*$-subsemigroups of abstract (unital commutative) $*$-semigroups.
This led the authors of \cite{st-sz1} to find a new
solution of the complex moment problem (not to mention
other extension results). The key feature that made this approach successful was the semiperfectness of
$\nfr_\pluss$, the property guaranteeing that every
positive definite function on $\nfr_\pluss$ is a
moment function, i.e.\ it has the Laplace-type integral
representation on the dual $*$-semigroup of
$\nfr_\pluss$. Inspired by this, Bisgaard attached to
any $*$-semigroup $\sfr$ an enveloping
perfect\footnote{\;i.e.\ semiperfect with the
uniqueness of integral representation} $*$-semigroup
$\mathfrak Q$ such that the set of all moment
functions on $\sfr$ coincides with the set of all
functions which are positive definite with respect to
$\mathfrak Q$ (cf.\ \cite{bis-01}). The instance of
semiperfect (but not perfect) $*$-semigroup
$\nfr_\pluss$ as an extending $*$-semigroup for $\nfr$
shows that semiperfectness is sufficient as far as
moment problems are concerned. Though there is a
limited freedom of choice of an extending semiperfect
$*$-semigroup for a fixed $*$-semigroup, it can by no
means be chosen arbitrarily, as indicated in the
discussion concerning the inclusions (22) in
\cite{st-sz1}. In this section we are looking for criteria that guarantee positive definite extendibility of mappings defined on symmetric subsets of (operator) semiperfect $*$-semigroups. As a result, we obtain the characterizations of ``truncated '' moment functions enriching those in \cite{st-sz1, st-sz2} with the new positivity conditions of the Riesz-Haviland type.

Suppose that ${\sf M}$ is a $\sigma$-algebra of
subsets of a set $X\neq \varnothing$ and $\dcal$ is a
complex linear space. A mapping $\mu\colon{\sf M} \to
\scal(\dcal)$ is called a {\it semispectral
measure}\/\footnote{\;With the natural identification
of bounded linear operators with sesquilinear forms,
our definition subsumes the classical semispectral
operator-valued measures (cf.\ \cite{Ml}).} on ${\sf
M}$ if $\mu(\,\cdot\,)(f,f)$ is a finite positive
measure for every $f\in \dcal$.

Let $\sfr$ be
a unital commutative $*$-semigroup. Denote by ${\sf
M}_\sfr$ the smallest $\sigma$-algebra of subsets of
$\xfr_\sfr$ with respect to which all the transforms
$\hat s$, $s\in\sfr$, are measurable. Following
\cite{bis}, we say that $\sfr$ is {\it operator
semiperfect} if for any complex linear space $\dcal$
and for any positive definite mapping
$\varPsi\colon\sfr\to\scal(\dcal)$ there exists a
semispectral measure $\mu\colon {\sf M}_\sfr \to
\scal(\dcal)$ such that
   \begin{equation} \label{sigma}
   \varPsi(s)=\int_{\xfr_\sfr}\hat s(\chi)\, \mu(\D \chi),\quad
s\in\sfr.
   \end{equation}
This equality is to be understood in the following sense
   \begin{equation*}
   \varPsi(s)(f,g)=\int_{\xfr_\sfr}\hat s(\chi)\, \mu(\D \chi)(f,g),\quad
f,g \in \dcal, \; s\in\sfr;
   \end{equation*}
here and forth all the integrands are tacitly assumed to be
summable. An equivalent definition of operator semiperfectness may
be stated in a matrix-type form, as shown in \cite{bis}.

Note that if $\xfr_\sfr$ is equipped with the topology
of pointwise convergence on $\sfr$, then the
transforms $\hat s$, $s\in\sfr$, are continuous, and
consequently the $\sigma$-algebra ${\sf M}_\sfr$ is
contained in the $\sigma$-algebra of all Borel subsets
of $\xfr_\sfr$ (the equality does not hold in
general). It is also clear that if ${\sf M}$ is a
$\sigma$-algebra of subsets of $\xfr_\sfr$ such that
${\sf M}_\sfr \subset {\sf M}$ and $\mu$ is a
semispectral measure on ${\sf M}$ satisfying
\eqref{sigma}, then the restriction of $\mu$ to ${\sf
M}_\sfr$ satisfies \eqref{sigma} as well.
   \begin{lem} \label{wn1}
Let $\sfr$ be a unital commutative $*$-semigroup whose dual
$*$-semi\-group $\xfr_\sfr$ is determining and let $T$ be a
nonempty subset of $\sfr$. Suppose that a complex linear space
$\dcal$ and a mapping $\varPhi\colon T \to \scal(\dcal)$ are
given.
   \begin{enumerate}
   \item[(i)] If $\mu\colon{\sf M}\to\scal(\dcal)$ is a
semispectral measure on a $\sigma$-algebra ${\sf M}$ of subsets of
$\xfr_\sfr$, ${\sf M}_\sfr \subset {\sf M}$ and $Y$ is a
determining subset of $\xfr_\sfr$ such that $Y \in {\sf M}$ and
   \begin{align*}
\varPhi(s) = \int_Y \hat s \, \D \mu,\quad s \in T,
   \end{align*}
then $\varLambda_{\varPhi,Y}$ is completely positive.
   \item[(ii)] If $T=\sfr$ and $\varLambda_\varPhi$ is completely
f-positive, then $\varPhi$ is positive definite.
   \end{enumerate}
In particular, if $\sfr$ is operator semiperfect and $T=\sfr$,
then $\varPhi$ is positive definite if and only if
$\varLambda_\varPhi$ is completely f-positive or equivalently if
$\varLambda_\varPhi$ is completely positive.
   \end{lem}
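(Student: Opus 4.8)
The plan is to prove the two numbered assertions in turn and then read off the concluding ``in particular'' clause as a formal combination of them with the implication recorded earlier in the text (complete positivity $\Rightarrow$ complete f-positivity). The analytic content sits entirely in~(i); part~(ii) is a one-line algebraic construction, and the final equivalence is bookkeeping.

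For part~(i) I would fix $m\Ge1$ and $[w_{k,l}]_{k,l=1}^m\in\mathrm M_\pluss^m(\pcal_T(Y))$, so that $[w_{k,l}(\chi)]_{k,l=1}^m\Ge0$ for every $\chi\in Y$, and must verify $[\varLambda_{\varPhi,Y}(w_{k,l})]_{k,l=1}^m\gg0$. Writing each $w_{k,l}=\sum_s c_{k,l,s}\,\hat s|_Y$ as a finite combination and using linearity of $\varLambda_{\varPhi,Y}$, relation \eqref{1maja}, and the hypothesis $\varPhi(s)=\int_Y\hat s\,\D\mu$, one obtains $\varLambda_{\varPhi,Y}(w_{k,l})=\int_Y w_{k,l}\,\D\mu$ as a form (integrability is guaranteed by the standing summability convention), whence for $h_1,\dots,h_m\in\dcal$
\[
\sum_{k,l=1}^m\varLambda_{\varPhi,Y}(w_{k,l})(h_k,h_l)=\int_Y\sum_{k,l=1}^m w_{k,l}(\chi)\,\mu(\D\chi)(h_k,h_l).
\]
To see this is $\Ge0$ I would pass to the finite positive measure $\lambda\okr\sum_{k=1}^m\mu(\cdot)(h_k,h_k)$. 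Each complex measure $\mu(\cdot)(h_k,h_l)$ is $\lambda$-continuous (Cauchy--Schwarz for the nonnegative form $\mu(\sigma)$), with Radon--Nikodym density $g_{k,l}$; testing against scalars $c_1,\dots,c_m$ via $\sum_{k,l}c_k\overline{c_l}\,\mu(\cdot)(h_k,h_l)=\mu(\cdot)(\sum_k c_k h_k,\sum_l c_l h_l)\Ge0$ shows $[g_{k,l}(\chi)]_{k,l=1}^m\Ge0$ for $\lambda$-a.e.\ $\chi$. The displayed integral then equals $\int_Y\big(\sum_{k,l}w_{k,l}(\chi)\,g_{k,l}(\chi)\big)\,\D\lambda$, and for a.e.\ $\chi$ the integrand is of the form $\sum_{k,l}W_{k,l}G_{k,l}$ for two positive semidefinite matrices $W=[w_{k,l}(\chi)]$ and $G=[g_{k,l}(\chi)]$, which is nonnegative (diagonalize $G$ and note $\sum_{k,l}W_{k,l}u_k\overline{u_l}=\is{W\bar u}{\bar u}\Ge0$). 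Integrating against $\lambda\Ge0$ closes part~(i).

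For part~(ii), with $T=\sfr$, I fix $s_1,\dots,s_m\in\sfr$, choose a unit vector $e\in\ell^2$, and set $p_k(\chi)=\chi(s_k)e$, so $p_k\in\pcal(\xfr_\sfr,\ell^2)$. Since $\is{\chi(s_k)e}{\chi(s_l)e}=\chi(s_k)\overline{\chi(s_l)}=\widehat{s_l^*s_k}(\chi)$, the matrix $A\okr[\widehat{s_l^*s_k}]_{k,l=1}^m$ equals $[\llangle p_k,p_l\rrangle]_{k,l=1}^m$ and hence belongs to $\mathrm M_{\mathrm f}^m(\pcal(\xfr_\sfr))$. Applying $\varLambda_\varPhi^{(m)}$ and invoking \eqref{1maja} (each $s_l^*s_k\in\sfr=T$) gives $\varLambda_\varPhi^{(m)}(A)=[\varPhi(s_l^*s_k)]_{k,l=1}^m$, which by complete f-positivity lies in $\mathrm M_\pluss^m(\scal(\dcal))$; unwinding the definition of $\gg0$ this is exactly $\sum_{k,l}\varPhi(s_l^*s_k)(h_k,h_l)\Ge0$, i.e.\ the positive definiteness of $\varPhi$.

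Finally, for the equivalence I would argue in a cycle: complete positivity implies complete f-positivity (already noted in the text), complete f-positivity implies positive definiteness by part~(ii), and positive definiteness implies complete positivity by operator semiperfectness—such a $\varPhi$ admits a semispectral measure $\mu$ on ${\sf M}_\sfr$ with $\varPhi(s)=\int_{\xfr_\sfr}\hat s\,\D\mu$, and part~(i) applied with $Y=\xfr_\sfr$ (determining by hypothesis, lying in ${\sf M}_\sfr$) yields complete positivity of $\varLambda_\varPhi$. The main obstacle is the measure-theoretic core of part~(i): showing that integrating a pointwise positive semidefinite matrix of transforms against a form-valued semispectral measure produces a positive semidefinite form. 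The Radon--Nikodym reduction to a single scalar dominating measure is the cleanest route; everything else is routine verification.
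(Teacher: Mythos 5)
Your proposal is correct and follows essentially the same route as the paper's proof: part (i) is handled by a Radon--Nikodym reduction to a single scalar dominating measure (the paper takes $\nu=\sum_i\mu(\cdot)(e_i,e_i)$, exactly your $\lambda$), almost-everywhere positive semidefiniteness of the density matrix, and nonnegativity of the entrywise pairing of two PSD matrices (the paper via the matrix square root, you via diagonalization -- the same computation); part (ii) is the same observation that $[\widehat{s_ks_l^*}]_{k,l}$ lies in $\mathrm M_{\mathrm f}^m(\pcal(\xfr_\sfr))$, which you merely make explicit with the rank-one choice $p_k(\chi)=\chi(s_k)e$. The concluding cycle for the ``in particular'' clause is also the intended one.
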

   \begin{proof} (i) Take $[w_{k,l}]_{k,l=1}^m \in {\mathrm
M}_\pluss^m(\pcal_T(Y))$ and $e_1, \ldots, e_m \in \dcal$. Notice
that the complex measures $\mu(\,\cdot\,)(e_i,e_j)$, $i,j \in \{1,
\ldots, m\}$, are absolutely continuous with respect to some
finite positive measure $\nu$ on ${\sf M}$ (e.g.\
$\nu(\,\cdot\,)=\sum_{i=1}^m \mu(\,\cdot\,)(e_i,e_i)$). By the
Radon-Nikodym theorem, there exists a system
$\{h_{i,j}\}_{i,j=1}^m$ of ${\sf M}$-measurable complex functions
on $\xfr_\sfr$ such that $\mu(\sigma)(e_i,e_j) = \int_{\sigma}
h_{i,j} \, \D \nu$ for all $\sigma \in {\sf M}$. Since
$[\mu(\sigma)(e_i,e_j)]_{i,j=1}^m \Ge 0$ for every $\sigma \in
{\sf M}$, one can show (see the proof of \cite[Theorem 6.4]{Ml})
that there exists $Z \in {\sf M}$ such that $\nu(\xfr_\sfr
\setminus Z)=0$ and $[h_{k,l}(\chi)]_{k,l=1}^m \Ge 0$ for every
$\chi \in Z$. Let for $\chi \in Z$, $[a_{k,l}(\chi)]_{k,l=1}^m$ be
the square root of $[h_{k,l}(\chi)]_{k,l=1}^m$. Since
$[w_{k,l}(\chi)]_{k,l=1}^m \Ge 0$ for every $\chi \in Y$ and
$\nu(Y \setminus (Y \cap Z)) = 0$, we get
   \begin{align*}
\sum_{k,l=1}^m \varLambda_{\varPhi,Y}(w_{k,l}) (e_k,e_l) =
\int\limits_{Y\cap Z} \sum_{k,l=1}^m w_{k,l} h_{k,l} \, \D \nu =
\int\limits_{Y \cap Z} \sum_{j=1}^m \sum_{k,l=1}^m w_{k,l} a_{k,j}
\overline{a_{l,j}} \, \D \nu \Ge 0.
   \end{align*}

   (ii) Take finite sequences $s_1, \ldots, s_m \in
\sfr$ and $e_1, \ldots, e_m \in \dcal$. It is easily
seen that $[\widehat {s_k s_l^*}]_{k,l=1}^m \in
{\mathrm M}_{\mathrm f}^m(\pcal(\xfr_\sfr))$. By
complete f-positivity of $\varLambda_\varPhi$, we get
$\sum_{k,l=1}^m \varPhi(s_l^*s_k)(e_k,e_l) \Ge 0$,
which finishes the proof.
   \end{proof}
   We now deal with the question of when a
$\scal(\dcal)$-valued mapping extends from a subset $T$ of
$\sfr$ to a positive definite mapping on the whole of
$\sfr$. The following result is the main tool in our
considerations. It is proved as a consequence of Theorem 1
of \cite{st-sz2} (in fact, a prototype of this theorem
appeared in \cite{st-sz1}). Below, we interpret the
algebraic tensor product $\fcal_\sfr \otimes \ell^2$ as the
collection of all $\ell^2$-valued functions on $\sfr$
vanishing off finite sets. Revoking the definition of
$\sfr_{\mathfrak h}$ from page \pageref{hermi} it may be
convenient for further references to detach the following
condition
   \begin{equation} \label{sym_diag}
   \text{$T$ is a symmetric subset of $\sfr$ containing
$\sfr_{\mathfrak h}$.}
   \end{equation}
   \begin{lem} \label{main}
Suppose that $\sfr$ is a unital commutative $*$-semigroup,
$T$ satisfies \eqref{sym_diag} and $Y$ is a determining
subset of $\xfr_\sfr$. If $\dcal$ is a complex linear
space, then for every mapping $\varPhi\colon T \to
\scal(\dcal)$ the following three conditions are
equivalent{\em :}
   \begin{enumerate}
   \item[(i)] $\varPhi$ extends to a positive definite mapping
$\varPsi \colon \sfr \to \scal(\dcal)$,
   \item[(ii)] $\sum_{k,l = 1}^m \sum_{\substack {s,t \in
\sfr \\ t^* s \in T}} \varPhi (t^*s)(e_k, e_l) \, \is{\lambda_k
(s)} {\lambda_l (t)}_{\ell^2} \Ge 0$ for every integer $m \Ge 1$
and for all finite systems $\{e_n\}_{n=1}^m \subset \dcal$ and
$\{\lambda_n\}_{n=1}^m \subset \fcal_\sfr \otimes \ell^2$ such
that
   \begin{align} \label{21.11.2006+}
\sum\nolimits_{\substack {s,t \in \sfr \\ t^*s = u}} \is{\lambda_i
(s)} {\lambda_j (t)}_{\ell^2} = 0, \quad u \in \sfr \setminus T,
\, i,j = 1, \dots, m,
   \end{align}
   \item[(iii)] $\varLambda_{\varPhi,Y}$ is completely f-positive.
   \end{enumerate}
   \end{lem}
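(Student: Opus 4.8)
The plan is to reduce the statement to its core by first invoking Proposition \ref{row}: complete f-positivity of $\varLambda_{\varPhi,Y}$ does not depend on the determining set $Y$, so for (iii) I may take $Y=\xfr_\sfr$ and work throughout with $\varLambda_\varPhi$. The genuinely new content is the reformulation (ii)$\Leftrightarrow$(iii) in terms of complete f-positivity, whereas the extendibility equivalence (i)$\Leftrightarrow$(ii) is obtained from Theorem 1 of \cite{st-sz2}. Accordingly I would prove (i)$\Rightarrow$(ii) by a direct computation, then (ii)$\Leftrightarrow$(iii) by an algebraic translation, and finally lean on the cited theorem for the hard implication (ii)$\Rightarrow$(i).

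For (i)$\Rightarrow$(ii), suppose $\varPhi$ extends to a positive definite $\varPsi$. Given $\{e_n\}_{n=1}^m$ and $\{\lambda_n\}_{n=1}^m$ subject to \eqref{21.11.2006+}, I would rewrite the sum in (ii), whose pairs satisfy $t^*s\in T$, as a sum over all $s,t\in\sfr$: grouping the terms according to $u=t^*s$, the contributions with $u\notin T$ vanish precisely by \eqref{21.11.2006+}, so nothing changes when the restriction is dropped and $\varPhi(t^*s)$ is replaced by $\varPsi(t^*s)$. Expanding each $\is{\lambda_k(s)}{\lambda_l(t)}_{\ell^2}$ coordinatewise in an orthonormal basis of $\ell^2$ splits the full sum into finitely many slices; in the slice indexed by the $j$-th coordinate one recognizes $\sum \varPsi(t^*s)(\lambda_{k,s}^{(j)}e_k,\lambda_{l,t}^{(j)}e_l)$, a positive definiteness sum for $\varPsi$ with the points $s$ and the vectors $\lambda_{k,s}^{(j)}e_k$, hence $\Ge 0$. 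Summing over the finitely many coordinates yields (ii).

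For (ii)$\Leftrightarrow$(iii) I would take $Y=\xfr_\sfr$ and unwind the definitions of $\pcal(\xfr_\sfr,\ell^2)$ and ${\mathrm M}_{\mathrm f}^m(\pcal_T(\xfr_\sfr))$. Writing $p_k(\chi)=\sum_{s}\chi(s)\lambda_k(s)$ for $\lambda_k\in\fcal_\sfr\otimes\ell^2$ and using \eqref{chi1} and \eqref{chi2}, one computes $\llangle p_k,p_l\rrangle=\sum_u c_{k,l}(u)\,\hat u$ with $c_{k,l}(u)=\sum_{t^*s=u}\is{\lambda_k(s)}{\lambda_l(t)}_{\ell^2}$; since $\xfr_\sfr$ is determining the transforms $\hat u$ are linearly independent, so the requirement that every entry of $[\llangle p_k,p_l\rrangle]_{k,l=1}^m$ lie in $\pcal_T(\xfr_\sfr)$ is exactly $c_{k,l}(u)=0$ for $u\notin T$, that is, \eqref{21.11.2006+}. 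Applying $\varLambda_\varPhi$ and \eqref{1maja} gives $\varLambda_\varPhi(\llangle p_k,p_l\rrangle)=\sum_{t^*s\in T}\is{\lambda_k(s)}{\lambda_l(t)}_{\ell^2}\,\varPhi(t^*s)$, so the assertion $[\varLambda_\varPhi(\llangle p_k,p_l\rrangle)]_{k,l=1}^m\gg 0$ coincides term for term with the inequality in (ii). As this correspondence runs in both directions (every admissible $\{e_n\},\{\lambda_n\}$ produces such a matrix and conversely), (ii) and (iii) are equivalent.

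The remaining implication (ii)$\Rightarrow$(i) is the main obstacle, and it is exactly here that the hypothesis \eqref{sym_diag} — that $T$ be symmetric and contain $\sfr_{\mathfrak h}$ — enters. Elementary algebra no longer suffices: one must actually \emph{construct} the positive definite extension $\varPsi$, and this is supplied by Theorem 1 of \cite{st-sz2} (with a prototype in \cite{st-sz1}), whose hypothesis is precisely condition (ii), the indices $k,l$ and the vectors $e_k$ carrying the sesquilinear-form-valued content. I would therefore check that the data of Lemma \ref{main} meet the assumptions of that theorem and invoke it directly. The symmetry requirement is indispensable, since without it — as the interplay $\nfr\subset\nfr_\pluss$ shows — positive definite extension may fail even for strengthened positivity conditions.
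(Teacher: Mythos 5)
Your proposal is correct and follows essentially the same route as the paper: the extendibility equivalence (i)$\Leftrightarrow$(ii) is delegated to Theorem 1 of \cite{st-sz2}, and (ii)$\Leftrightarrow$(iii) is obtained by the same translation $\lambda_k \leftrightarrow p_k$ via \eqref{lambdas} and \eqref{21.11.2006}, with the linear independence of the transforms identifying the constraint \eqref{21.11.2006+} with membership of the matrix entries in $\pcal_T$. The only (harmless) deviations are that you first normalize to $Y=\xfr_\sfr$ by Proposition \ref{row} where the paper carries a general determining $Y$ through the computation, and that you supply a self-contained verification of (i)$\Rightarrow$(ii) which the paper also absorbs into the citation.
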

   \begin{proof}
(i)$\Leftrightarrow$(ii) The reader can convince himself that
condition (ii) of \cite[Theorem 1]{st-sz2} is equivalent to our
condition (ii); note that this can be shown directly, without
recourse to the proof given therein. Hence, our equivalence
(i)$\Leftrightarrow$(ii) is a consequence of \cite[Theorem
1]{st-sz2}.

(ii)$\Rightarrow$(iii) Fix an integer $m \Ge 1$. Take $w =
[\llangle p_k, p_l \rrangle]_{k,l=1}^m \in {\mathrm M}_{\mathrm
f}^m(\pcal_T(Y))$ with $p_1, \ldots, p_m \in \pcal(Y,\ell^2)$.
Then there exist $\lambda_1, \dots, \lambda_m \in \fcal_\sfr
\otimes \ell^2$ such that
   \begin{align} \label{lambdas}
p_k(\chi) = \sum_{s \in \sfr} \chi(s) \lambda_k(s), \quad \chi \in
Y, \, k=1, \ldots, m.
   \end{align}
It is clear that
   \begin{align} \label{21.11.2006}
\is{p_k(\chi)}{p_l(\chi)}_{\ell^2} = \sum_{u \in \sfr} \chi(u) \,
\sum_{\substack{s,t \in \sfr \\ t^*s = u}}
\is{\lambda_k(s)}{\lambda_l(t)}_{\ell^2}, \quad \chi \in Y, \, k,l
=1, \ldots, m.
   \end{align}
Since $w \in {\mathrm M}^m(\pcal_T(Y))$ and $Y$ is determining, we
see that \eqref{21.11.2006} implies \eqref{21.11.2006+}. It
follows from \eqref{1maja}, \eqref{21.11.2006+},
\eqref{21.11.2006} and (ii) that
   \allowdisplaybreaks
   \begin{align*}
\sum_{k,l=1}^m \varLambda_{\varPhi,Y}(\llangle p_k, p_l
\rrangle)(e_k,e_l) & = \sum_{k,l=1}^m \; \sum_{u \in T} \;
\sum_{\substack{s,t \in \sfr \\ t^*s=u}} \varPhi(u) (e_k,e_l)
\is{\lambda_k(s)}{\lambda_l(t)}_{\ell^2}
   \\
& = \sum_{k,l=1}^m \; \sum_{\substack{s,t \in \sfr \\
t^*s \in T}} \varPhi(t^*s) (e_k,e_l)
\is{\lambda_k(s)}{\lambda_l(t)}_{\ell^2} \Ge 0
   \end{align*}
for all vectors $e_1, \ldots, e_m \in \dcal$. This shows that
$\varLambda_{\varPhi,Y}$ is completely f-positive.

Reversing the above reasoning, we infer (ii) from (iii); to see
this, for fixed $\lambda_1, \ldots, \lambda_m \in \fcal_\sfr
\otimes \ell^2$ consider $p_1, \ldots, p_m \in \pcal(Y,\ell^2)$
defined by \eqref{lambdas}. This completes the proof.
   \end{proof}
   With the above discussions, we are in a position to state
the main result of the paper which supplies criteria for
extendibility to a positive definite function.
   \begin{thm} \label{main2}
Suppose that $\sfr$ is an operator semiperfect
$*$-semigroup, $T$ satisfies \eqref{sym_diag} and $Y$ is a
determining subset of $\xfr_\sfr$. If $\dcal$ is a complex
linear space, then for every mapping $\varPhi\colon T \to
\scal(\dcal)$ the following four conditions are
equivalent{\em :}
   \begin{enumerate}
   \item[(i)] $\varPhi$ extends to a positive definite mapping
$\varPsi \colon \sfr \to \scal(\dcal)$,
   \item[(ii)] $\varPhi(s) = \int_{\xfr_\sfr} \hat s \, \D \mu$
for all $s \in T$ with some semispectral measure $\mu$ on ${\sf
M}_\sfr$,
   \item[(iii)] $\varLambda_\varPhi$ is completely
positive\,\footnote{\;It follows from our assumptions that
$\xfr_\sfr$ is determining, which makes it legitimate to
consider $\varLambda_\varPhi$.},
   \item[(iv)] $\varLambda_{\varPhi,Y}$ is completely f-positive.
   \end{enumerate}
   \end{thm}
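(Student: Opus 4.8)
The plan is to establish the four equivalences as a single cycle of implications $(\mathrm{i})\Rightarrow(\mathrm{ii})\Rightarrow(\mathrm{iii})\Rightarrow(\mathrm{iv})\Rightarrow(\mathrm{i})$, drawing entirely on the groundwork already laid. A preliminary observation makes everything run smoothly: since $Y$ is determining, the linear independence of $\{\hat s|_Y\}_{s\in\sfr}$ forces that of $\{\hat s\}_{s\in\sfr}$ on all of $\xfr_\sfr$, so $\xfr_\sfr$ is itself determining. This is what legitimizes speaking of $\varLambda_\varPhi=\varLambda_{\varPhi,\xfr_\sfr}$ in conditions (ii) and (iii), as noted in the footnote to (iii).

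For $(\mathrm{i})\Rightarrow(\mathrm{ii})$ I would invoke operator semiperfectness directly: a positive definite extension $\varPsi\colon\sfr\to\scal(\dcal)$ admits, by definition, a semispectral measure $\mu$ on ${\sf M}_\sfr$ with $\varPsi(s)=\int_{\xfr_\sfr}\hat s\,\D\mu$ for every $s\in\sfr$, and restricting to $s\in T$ (where $\varPsi$ agrees with $\varPhi$) yields (ii). For $(\mathrm{ii})\Rightarrow(\mathrm{iii})$ I would apply Lemma \ref{wn1}(i) with $Y$ replaced by the whole of $\xfr_\sfr$ and ${\sf M}={\sf M}_\sfr$, which turns the integral representation into complete positivity of $\varLambda_\varPhi$. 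For $(\mathrm{iii})\Rightarrow(\mathrm{iv})$ I would use that complete positivity trivially entails complete f-positivity of $\varLambda_\varPhi$, and then transport this to the prescribed determining set $Y$ via Proposition \ref{row}, which guarantees that complete f-positivity is insensitive to the choice of determining set. Finally, $(\mathrm{iv})\Rightarrow(\mathrm{i})$ is precisely the implication (iii)$\Rightarrow$(i) of Lemma \ref{main}, available because $T$ satisfies \eqref{sym_diag}.

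The proof is thus an assembly of earlier results, and no genuinely new estimate is needed; the substantive content sits in Lemma \ref{main} (via Theorem~1 of \cite{st-sz2}) and in the Radon--Nikodym argument behind Lemma \ref{wn1}. The one point I would flag as the crux is the role of operator semiperfectness: Lemma \ref{main} only delivers an \emph{abstract} positive definite extension $\varPsi$, and it is exactly the semiperfectness hypothesis that upgrades this to the \emph{concrete} integral representation in (ii), thereby closing the loop between the purely algebraic extendibility criteria and the measure-theoretic, Riesz--Haviland-type statement. Everything else — the passage between $\varLambda_\varPhi$ and $\varLambda_{\varPhi,Y}$, and between complete positivity and complete f-positivity — is taken care of by Proposition \ref{row} together with the elementary implication recorded just after the definitions.
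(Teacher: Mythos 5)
Your proof is correct and follows essentially the same route as the paper: the paper likewise derives (i)$\Rightarrow$(ii) from operator semiperfectness, (ii)$\Rightarrow$(iii) from Lemma \ref{wn1}\,(i), (iii)$\Rightarrow$(iv) from the trivial implication plus Proposition \ref{row}, and the equivalence of (i) and (iv) from Lemma \ref{main}. The only (immaterial) difference is organizational: you close the loop as a single cycle via Lemma \ref{main}, whereas the paper additionally gives a direct proof of (ii)$\Rightarrow$(i) using that $s^*s\in T$ makes every $\hat s$ square $\mu$-summable --- a step your cycle renders redundant.
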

   \begin{proof}
(i)$\Rightarrow$(ii) Use operator semiperfectness of $\sfr$.

(ii)$\Rightarrow$(i) Since $|\hat s|^2 = \widehat{s^*s}$ and $s^*s
\in T$ for every $s\in \sfr$, we see that the function $\hat s$ is
square summable with respect to $\mu$ for every $s\in \sfr$. This
enables us to define the mapping $\varPsi \colon \sfr \to
\scal(\dcal)$ by
   \begin{align*}
\varPsi(s) = \int_{\xfr_\sfr} \hat s \, \D \mu, \quad s \in \sfr,
   \end{align*}
It follows from Lemma \ref{wn1} that $\varPsi$ is a positive
definite extension of $\varPhi$.

(i)$\Leftrightarrow$(iv) Apply Lemma \ref{main}.

(ii)$\Rightarrow$(iii) This is a consequence of Lemma \ref{wn1}.

(iii)$\Rightarrow$(iv) Since (iii) implies (iv) with $Y =
\xfr_\sfr$, we see that $\varLambda_\varPhi$ is completely
f-positive. An application of Proposition \ref{row} guarantees
that $\varLambda_{\varPhi,Y}$ is completely f-positive as well.
This completes the proof.
   \end{proof}
We now turn to the the case of scalar functions. A
unital commutative $*$-semigroup $\sfr$ is called {\it
semiperfect} (cf.\ \cite{bis}) if for any positive
definite function $\varPsi\colon\sfr\to\cbb$ there
exists a finite positive measure $\mu$ on ${\sf M}_\sfr$ such
that
   \begin{equation*}
   \varPsi(s)=\int_{\xfr_\sfr}\hat s(\chi)\, \mu(\D \chi),\quad
s\in\sfr.
   \end{equation*}
Evidently, operator
semiperfectness implies semiperfectness but not conversely as indicated by Bisgaard in
\cite{bis2}. The following is a scalar counterpart of Theorem
\ref{main2}.
   \begin{thm} \label{main2+}
Suppose that $\sfr$ is a semiperfect $*$-semigroup, $T$
satisfies \eqref{sym_diag} and $Y$ is a determining subset
of $\xfr_\sfr$. Then for every function $\varphi\colon T
\to \cbb$ the following four conditions are equivalent{\em
:}
   \begin{enumerate}
   \item[(i)] $\varphi$ extends to a positive definite function
$\psi \colon \sfr \to \cbb$,
   \item[(ii)] $\varphi(s) = \int_{\xfr_\sfr} \hat s \, \D \mu$
for all $s \in T$ with some positive measure $\mu$ on ${\sf
M}_\sfr$,
   \item[(iii)] $\varLambda_{\varphi}(p) \Ge 0$ for every $p \in
\pcal_T(\xfr_\sfr)$ such that $p(\chi) \Ge 0$ for all $\chi \in
\xfr_\sfr$,
   \item[(iv)] $\varLambda_{\varphi,Y}(p) \Ge 0$ for every
$p \in \pcal_T(Y)$ for which there exist finitely many functions $q_1,
\ldots, q_n \in \pcal(Y)$ such that $p(\chi) = \sum_{j=1}^n
|q_j(\chi)|^2$, $\chi \in Y$.
   \end{enumerate}
   \end{thm}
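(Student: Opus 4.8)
The plan is to deduce the theorem from the form-valued apparatus already at our disposal by regarding a scalar function as a mapping into $\scal(\cbb)$, the forms on the one-dimensional space $\dcal = \cbb$. Identifying $\phi \in \scal(\cbb)$ with $\phi(1,1) \in \cbb$, one has $[\phi_{k,l}]_{k,l=1}^m \gg 0$ exactly when the scalar matrix $[\phi_{k,l}(1,1)]_{k,l=1}^m \Ge 0$; hence positive definiteness of a form-valued $\varPsi$ amounts to positive definiteness of $\psi = \varPsi(\cdot)(1,1)$, and, since linear independence of $\{\hat s|_Y\}_{s \in \sfr}$ forces that of $\{\hat s\}_{s\in\sfr}$, the set $\xfr_\sfr$ is determining too, so that both $\varLambda_\varphi$ and $\varLambda_{\varphi,Y}$ are well defined via \eqref{1maja}. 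The crux of the reduction lies in two scalarization facts: \textbf{(a)} $\varLambda_\varphi$ is completely positive if and only if (iii) holds; and \textbf{(b)} $\varLambda_{\varphi,Y}$ is completely f-positive if and only if (iv) holds. Thus (iii) and (iv) are precisely the \emph{single-element} ($m=1$) shadows of complete positivity and complete f-positivity.

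To establish (b) I would only pass between $m=1$ and arbitrary $m$. The trivial direction is that complete f-positivity specializes to (iv) at $m=1$. For the converse, take $[w_{k,l}] = [\llangle p_k, p_l \rrangle] \in {\mathrm M}_{\mathrm f}^m(\pcal_T(Y))$ with $p_1, \ldots, p_m \in \pcal(Y,\ell^2)$; fixing $c \in \cbb^m$ and putting $q = \sum_k c_k p_k \in \pcal(Y,\ell^2)$, a direct computation gives $\sum_{k,l} c_k \bar c_l \llangle p_k, p_l \rrangle = \llangle q, q \rrangle$. This element lies in $\pcal_T(Y)$, being a linear combination of the entries $w_{k,l}$, and by Lemma \ref{factor} equals $\sum_j |q_j|^2$ with $q_j \in \pcal(Y)$; so (iv) applies and yields $\sum_{k,l} \varLambda_{\varphi,Y}(w_{k,l}) c_k \bar c_l = \varLambda_{\varphi,Y}(\llangle q, q \rrangle) \Ge 0$. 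As $c$ is arbitrary, $[\varLambda_{\varphi,Y}(w_{k,l})] \Ge 0$, i.e.\ complete f-positivity. Fact (a) is the very same computation carried out on $\xfr_\sfr$, with the nonnegative element $p = \sum_{k,l} c_k \bar c_l w_{k,l} \in \pcal_T(\xfr_\sfr)$ replacing the sum of squares.

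With (a) and (b) at hand the four equivalences assemble quickly. The equivalence (i)$\Leftrightarrow$(iv) is Lemma \ref{main} taken with $\dcal = \cbb$ together with (b). For (i)$\Rightarrow$(ii) I invoke semiperfectness of $\sfr$ to represent the extending positive definite $\psi$ and restrict to $T$; for (ii)$\Rightarrow$(i) I note that $\varepsilon \in \sfr_{\mathfrak h} \subset T$ by \eqref{sym_diag}, so $\mu(\xfr_\sfr) = \varphi(\varepsilon) < \infty$ and $\mu$ is automatically \emph{finite}, while $s^*s \in T$ gives $\int |\hat s|^2 \, \D \mu = \varphi(s^*s) < \infty$; hence $\psi(s) = \int_{\xfr_\sfr} \hat s \, \D \mu$ is defined on all of $\sfr$, extends $\varphi$, and is positive definite since $\sum_{k,l} \psi(s_l^* s_k) \zeta_k \bar\zeta_l = \int |\sum_k \zeta_k \hat s_k|^2 \, \D \mu \Ge 0$. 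Next, (ii)$\Rightarrow$(iii) follows from Lemma \ref{wn1}(i) with $Y = \xfr_\sfr$, which makes $\varLambda_\varphi$ completely positive, hence (iii) by (a). Finally (iii)$\Rightarrow$(iv): by (a) condition (iii) gives complete positivity of $\varLambda_\varphi$, so a fortiori complete f-positivity, which Proposition \ref{row} transfers to $\varLambda_{\varphi,Y}$, whence (iv) by (b). These arrows close the cycle.

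The genuine content, and the one spot where an actual hypothesis on $\sfr$ is spent, is the coincidence of complete f-positivity with complete positivity expressed by (iii)$\Leftrightarrow$(iv): these notions differ in general, and it is semiperfectness, entering through (i)$\Rightarrow$(ii), that collapses them in the scalar setting. I expect this to be the main obstacle; the remaining work is the routine $m=1 \Leftrightarrow$ all-$m$ bookkeeping behind (a) and (b) and the verification that the representing measure is finite.
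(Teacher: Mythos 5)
Your proposal is correct and follows essentially the same route as the paper: the two scalarization facts (a) and (b) — that (iii) is equivalent to complete positivity of $\varLambda_\varphi$ and (iv) to complete f-positivity of $\varLambda_{\varphi,Y}$, proved via the quadratic-form trick with $p_\lambda=\sum_{k,l}\lambda_k\bar\lambda_l w_{k,l}$, respectively $q_\lambda=\sum_k\lambda_k p_k$ and Lemma \ref{factor} — are exactly the two observations the paper makes explicitly, after which it ``adapts the proof of Theorem \ref{main2}'' just as you do (Lemma \ref{main}, Lemma \ref{wn1}, Proposition \ref{row}, and semiperfectness for the integral representation). Your added remarks on the finiteness of $\mu$ via $\varepsilon\in\sfr_{\mathfrak h}\subset T$ and on $\xfr_\sfr$ being determining are correct details the paper leaves implicit.
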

   \begin{proof}
Note first that the functional $\varLambda_\varphi$ is completely
positive if and only if (iii) holds. Indeed, if (iii) holds and
$w=[w_{i,j}]_{i,j=1}^m \in {\mathrm M}_\pluss^m(\pcal_T(\xfr_\sfr))$, then
for every $\lambda = (\lambda_1, \ldots, \lambda_m) \in \cbb^m$,
the function $p_\lambda \okr \sum_{i,j=1}^m \lambda_i
\overline{\lambda_j} w_{i,j} \in \pcal_T(\xfr_\sfr)$ is nonnegative on
$\xfr_\sfr$, and consequently
   \begin{align*}
\sum_{i,j=1}^m \lambda_i \overline{\lambda_j} \varLambda_\varphi
(w_{i,j}) = \varLambda_\varphi(p_\lambda) \Ge 0.
   \end{align*}
This means that $[\varLambda_\varphi(w_{i,j})]_{i,j=1}^m
\Ge 0$. The reverse implication is obvious.

The next observation is that the functional
$\varLambda_{\varphi,Y}$ is completely f-positive if and
only if (iv) holds. Indeed, if (iv) holds and $w =
[\llangle p_k, p_l \rrangle]_{k,l=1}^m \in {\mathrm
M}_{\mathrm f}^m(\pcal_T(Y))$ with some $p_1, \ldots, p_m
\in \pcal(Y,\ell^2)$, then for every $\lambda = (\lambda_1,
\ldots, \lambda_m) \in \cbb^m$, the function $q_\lambda
\okr \sum_{i=1}^m \lambda_i p_i$ belongs to
$\pcal(Y,\ell^2)$ whereas $\llangle q_\lambda, q_\lambda
\rrangle $ is in $\pcal_T(Y)$. Hence, by Lemma
\ref{factor}, we have
   \begin{align*}
   \sum_{i,j=1}^m \lambda_i \overline{\lambda_j}
\varLambda_{\varphi,Y} (\llangle p_i, p_j \rrangle) =
\varLambda_{\varphi, Y} (\llangle q_\lambda, q_\lambda \rrangle) \Ge 0,
   \end{align*}
which means that $[\varLambda_{\varphi,Y} (\llangle p_i, p_j
\rrangle)]_{i,j=1}^m \Ge 0$. The reverse implication is
plain.

The above enables us to adapt the proof of Theorem \ref{main2} to the present context.
   \end{proof}
   \section*{\sc Applications}
   \subsection{\label{s4}The truncated complex moment problem}
The $*$-semigroup $\nfr_\pluss$ we intend to
investigate comes from \cite{st-sz1}. It plays a
crucial role in the complex moment problem. The
initial part of this section is devoted to a
description of $\pcal_T(Y)$ in this particular case.

Denote by $\nfr_\pluss$ the $*$-semigroup $(\{(m,n) \in
\zbb \times \zbb \colon m+n \Ge 0\}, +, *)$ with
coordinatewise defined addition as semigroup operation and
involution $(m,n)^* = (n,m)$. Owing to \cite[Remark
7]{st-sz1}, the dual $*$-semigroup $\xfr_{\nfr_+}$ can be
identified algebraically with the $*$-subsemigroup
$(\varOmega \cup (\nul \times \mathbb T), \cdot,*)$ of the
product $*$-semigroup $(\cbb \times \mathbb T, \cdot, *)$
(with coordinatewise defined multiplication as semigroup
operation and involution $(z,w)^* = (\bar z, \bar w)$),
where
   \begin{align*}
\varOmega \okr \{(z, z \, {\bar z}^{\,-1}) \colon z \in
\cbb_*\}.
   \end{align*}
Under this identification, $\widehat{(m,n)}$ is given by
   \begin{align} \label{crucial}
\widehat{(m,n)}(z,w) = \chi_{z,w}(m,n) =
   \begin{cases}
   z^m \bar z^n & \text{if } z\neq0, \\
   0 & \text{if $z=0$ and $m+n>0$,} \\
   w^m & \text{if $z=0$ and $m+n=0,$}
   \end{cases}
   \end{align}
for all $(z,w) \in \varOmega \cup (\nul \times \mathbb T)$
and $(m,n) \in \nfr_\pluss$.
Given a subset $Z$ of $\cbb_*$, we write
   \begin{align} \label{yz}
Y_Z = \{(z,z \, {\bar z}^{\,-1}) \colon z \in Z\} \subset \varOmega.
   \end{align}
By the above identification, we see that if $T$ is a
nonempty subset of $\nfr_\pluss$, then $\pcal_T (Y_Z)$ may
be regarded as the set of all rational functions $p$ on $Z$
of the form
   \begin{align} \label{ratf}
p(z,\bar z)=\sum_{(m,n) \in T} a_{m,n} z^m \bar z^n, \quad
z \in Z,
   \end{align}
where $\{a_{m,n}\}_{(m,n) \in T} \subset \cbb$ is a finite
system.

For a subset $T$ of $\zbb \times \zbb$, we denote by
$\cbb_T(z,\bar z)$ the set of all rational functions $p$ of
the form \eqref{ratf} with $Z=\cbb_*$ $(\{a_{m,n}\}_{(m,n)
\in T} \subset \cbb$ is a finite system$)$. If $T \subset
\nfr$, where $\nfr$ is as in Example \ref{nag}, then
$\cbb_T(z,\bar z)$ coincides with the set $\cbb_T[z,\bar
z]$ of all complex polynomials in $z$ and $\bar z$ whose
coefficients vanish for all indices in $\nfr \setminus T$.
If $T = \nfr$, then we abbreviate $\cbb_T[z,\bar z]$ to
$\cbb[z,\bar z]$. It is worth noticing that members of
$\cbb_{\nfr_+}(z,\bar z)$ are functions, which are bounded
on every punctured disc $\{ z\in \cbb \colon 0 < |z| \Le
R\}$, $R > 0$. This is due to the fact that all terms
$z^m \bar z^n$ with $m+n \Ge 0$ share this property. As a
consequence, the set $\cbb_{\nfr_+}(z,\bar z)$ is
essentially smaller than $\cbb_{\zbb \times \zbb}(z,\bar
z)$. In fact, members of $\cbb_{\nfr_+}(z,\bar z)$ can be
characterized by their boundedness on the fixed punctured
disc $\{ z\in \cbb \colon 0 < |z| \Le 1\}$.
   \begin{pro}\label{ogrwym}
If $p \in \cbb_{\zbb \times \zbb}(z, \bar z)$, then the
following conditions are equivalent
   \begin{enumerate}
   \item[(i)] $p \in \cbb_{\nfr_+}(z,\bar z)$,
   \item[(ii)] $\sup \{|p(z,\bar z)|\colon z \in \cbb, \,
0 < |z| \Le 1\} < \infty$.
   \end{enumerate}
   \end{pro}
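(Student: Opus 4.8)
The implication (i)$\Rightarrow$(ii) is immediate and has in fact already been noted in the paragraph preceding the statement: if $p=\sum_{m+n\Ge0}a_{m,n}z^m\bar z^n$ is a finite sum, then for $0<|z|\Le1$ each term obeys $|z^m\bar z^n|=|z|^{m+n}\Le1$, so $|p(z,\bar z)|\Le\sum|a_{m,n}|<\infty$. The substance of the proposition therefore lies in the reverse implication (ii)$\Rightarrow$(i), and my plan is to recover the coefficients of $p$ by a one-variable Fourier analysis in the angular variable.

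First I would fix an arbitrary representation $p(z,\bar z)=\sum_{(m,n)\in S}a_{m,n}z^m\bar z^n$ with $S\subset\zbb\times\zbb$ finite, witnessing $p\in\cbb_{\zbb\times\zbb}(z,\bar z)$; the goal is to show that these coefficients already vanish whenever $m+n<0$, so that this very representation exhibits $p$ as a member of $\cbb_{\nfr_+}(z,\bar z)$. Writing $z=r\E^{\I\theta}$ with $0<r\Le1$, one has $z^m\bar z^n=r^{m+n}\E^{\I(m-n)\theta}$, and for each $j\in\zbb$ I would form the angular Fourier coefficient
   \begin{align*}
c_j(r)\okr\frac1{2\pi}\int_0^{2\pi}p(r\E^{\I\theta},r\E^{-\I\theta})\,\E^{-\I j\theta}\,\D\theta .
   \end{align*}
By orthogonality of the exponentials $\{\E^{\I k\theta}\}_{k\in\zbb}$ on $[0,2\pi]$, only the terms with $m-n=j$ survive, giving
   \begin{align*}
c_j(r)=\sum_{\substack{(m,n)\in S\\ m-n=j}}a_{m,n}\,r^{m+n},\quad 0<r\Le1 .
   \end{align*}
For fixed $j$ the assignment $(m,n)\mapsto m+n$ is injective on $\{(m,n)\colon m-n=j\}$, so $c_j(\,\cdot\,)$ is a genuine Laurent polynomial in $r$ whose coefficient at $r^{m+n}$ is precisely $a_{m,n}$.

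The last step converts boundedness of $p$ into a statement about these Laurent polynomials. Condition (ii) yields a constant $M$ with $|p(z,\bar z)|\Le M$ for $0<|z|\Le1$, whence the trivial estimate $|c_j(r)|\Le\frac1{2\pi}\int_0^{2\pi}|p(r\E^{\I\theta},r\E^{-\I\theta})|\,\D\theta\Le M$ holds for all $0<r\Le1$ and all $j$. Now a Laurent polynomial in $r$ that remains bounded as $r\to0^+$ can carry no negative power of $r$: if $-N$ with $N\Ge1$ were the least exponent present, with nonzero coefficient $\alpha$, then $r^{N}c_j(r)\to\alpha$ while $r^{N}\to0$, forcing $|c_j(r)|\to\infty$. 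Hence $a_{m,n}=0$ whenever $m+n<0$, for every $j$, which is exactly what is required.

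I expect no serious obstacle here, as the whole argument is essentially a separation of variables $z=r\E^{\I\theta}$. The only points that demand a little care are the bookkeeping that, for fixed $j$, distinct pairs with $m-n=j$ produce distinct total degrees $m+n$ (so the coefficients are read off $c_j$ unambiguously), and the elementary fact that a Laurent polynomial bounded near the origin has no principal part. If one prefers to speak of \emph{the} coefficients of $p$, one may invoke Lemma \ref{1}, since the punctured unit disc contains infinitely many concentric circles and the representation of $p$ is therefore unique; but the direct Fourier computation above renders such an appeal unnecessary.
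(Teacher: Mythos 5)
Your proof is correct. It shares the paper's basic strategy---pass to polar coordinates $z=r\E^{\I\theta}$, so that $z^m\bar z^n=r^{m+n}\E^{\I(m-n)\theta}$, and let boundedness near the origin kill the terms with $m+n<0$---but the two arguments isolate those terms by genuinely different decompositions. The paper groups terms by total degree, writing $p=p_0+\sum_{j=1}^k p_j$ with $p_j$ collecting the terms of degree $-j$; it multiplies by $|z|^{k-1}$ so that everything except $|z|^{k-1}p_k=|z|^{-1}\cdot(\text{a trigonometric polynomial in }\theta)$ remains bounded, concludes $p_k\equiv 0$, iterates downward through $p_{k-1},\dots,p_1$, and finally appeals to Lemma \ref{1} to pass from the vanishing of these functions to the vanishing of the coefficients. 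You instead group by angular frequency $m-n=j$, integrating against $\E^{-\I j\theta}$ over the circle of radius $r$; since $(m,n)\mapsto m+n$ is injective on each frequency class, each $c_j(r)$ is a Laurent polynomial in $r$ whose coefficients are the individual $a_{m,n}$, and boundedness on $(0,1]$ rules out negative powers. Your route reads off the coefficients in a single step, with no induction and no appeal to Lemma \ref{1}; the paper's route avoids integration altogether and stays within the elementary identity-principle framework (Lemma \ref{1}) it has already set up for other purposes. Both arguments are complete.
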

   \begin{proof}
Since the implication (i)$\Rightarrow$(ii) has been
clarified above, we can focus on the implication
(ii)$\Rightarrow$(i). Assume that (ii) holds and $p$ is as in
\eqref{ratf} with $T=\zbb \times \zbb$ and $Z =
\cbb_*$. We may write the rational function $p$ in the form
$p = p_0 + \sum_{j=1}^k p_j$, where $k$ is a positive
integer, $p_0 \in \cbb_{\nfr_+}(z,\bar z)$ and
   \begin{align} \label{ladroz}
p_j(z, \bar z) = \sum_{\substack{m,n \in \zbb\\ m+n=-j}}
a_{m,n} z^m\bar z^n = |z|^{-j}\sum_{\substack{m,n \in \zbb\\
m+n=-j}} a_{m,n} \left[\frac z{|z|}\right]^m \left[\frac
{\bar z}{|z|}\right]^n
   \end{align}
for $z\in \cbb_*$ and $j=1, \ldots,k$. Then
   \begin{align*}
|z|^{k-1} p_k(z,\bar z) = |z|^{k-1}\Big( p(z,\bar z) -
p_0(z,\bar z) - \sum_{j=1}^{k-1} p_j(z, \bar z) \Big),
\quad z\in \cbb_*,
   \end{align*}
which, together with (ii) and \eqref{ladroz}, implies that
the function $z \mapsto |z|^{k-1} p_k(z,\bar z)$ is bounded
on $\{z \in \cbb \colon 0 < |z| \Le 1\}$. Substituting
$z=r\E^{\I\theta}$ with real $\theta$ and $r>0$, and using
\eqref{ladroz}, we deduce that $p_k(z,\bar z)=0$ for all $z
\in \cbb_*$. Repeating this argument, we see that
$p_1(z,\bar z)=\ldots=p_k(z,\bar z)=0$ for all $z \in
\cbb_*$. By Lemma~ \ref{1}, this completes the proof.
   \end{proof}
Given $T \subset \zbb \times \zbb$, we say that a subset
$Z$ of $\cbb_*$ is {\em determining} for $\cbb_T(z,\bar z)$
(or shorter:\ {\em $\cbb_T(z,\bar z)$-determining}) if
every rational function $p \in \cbb_T(z,\bar z)$ vanishing
on $Z$ (i.e.\ $p(z,\bar z) = 0$ for all $z\in Z$) vanishes
on $\cbb_*$. By Lemma \ref{1}, the set $Z$ is
$\cbb_T(z,\bar z)$-determining if and only if the system
of functions $z \mapsto z^m \bar z^n$, $(m,n) \in T$, is linearly independent in $\cbb^Z$.
If $T \subset \zbb_\pluss \times \zbb_\pluss$,
then we allow $0$ to be a member of a determining set.
Clearly, if $T \subset \zbb \times \zbb$ and $Z \subset
\cbb_*$ is a determining set for $\cbb_{\zbb\times\zbb}
(z,\bar z)$, then $Z$ is determining for $\cbb_T (z,\bar
z)$. The reverse is not true, e.g.\ for $T=\{(n,n) \colon n
\in \zbb_\pluss\}$ and $Z = \{z \in \cbb_*\colon z= \bar
z\}$. Lemma \ref{1} provides examples of
$\cbb_{\zbb\times\zbb} (z,\bar z)$-determining sets. A particular class of them is indicated below.
   \begin{lem} \label{niepust}
If the interior of a set $Z \subset \cbb_*$ is not empty,
then $Z$ is $\cbb_{\zbb\times\zbb} (z,\bar z)$-determining.
   \end{lem}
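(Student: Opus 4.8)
The plan is to reduce the statement to Lemma \ref{1} by locating inside $Z$ a configuration to which that lemma applies. By definition, $Z$ is $\cbb_{\zbb\times\zbb}(z,\bar z)$-determining precisely when the only finite Laurent-type sum $p(z,\bar z) = \sum_{m,n\in\zbb} a_{m,n}\, z^m\bar z^n$ that vanishes on $Z$ is the trivial one; equivalently (as noted just before the statement, via Lemma \ref{1}) when the system $z\mapsto z^m\bar z^n$, $(m,n)\in\zbb\times\zbb$, is linearly independent in $\cbb^Z$. So it suffices to show that such a $p$ vanishing on $Z$ has all coefficients equal to zero.

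First I would use the hypothesis that the interior of $Z$ is nonempty to fix a point $c$ and a radius $r>0$ with the open disc $\{z \colon |z-c|<r\}\subset Z$. Since $Z\subset\cbb_*$, we may further shrink $r$ so that $0$ lies outside the closed disc; in any case $0$ is not in the disc. The key observation is that this disc contains the entire family of concentric circles $\{z \colon |z-c|=\varrho\}$ for $0<\varrho<r$, which is a union of infinitely many concentric circles in the precise sense required by Lemma \ref{1}, all of them avoiding the origin.

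Now, given a finite system $\{a_{m,n}\}_{m,n\in\zbb}$ with $\sum_{m,n\in\zbb} a_{m,n}\, z^m\bar z^n = 0$ for every $z\in Z$, this relation holds a fortiori on the above union of infinitely many concentric circles. Hence Lemma \ref{1} forces $a_{m,n}=0$ for all $(m,n)\in\zbb\times\zbb$, so $p$ vanishes identically on $\cbb_*$. This is exactly the defining property of a $\cbb_{\zbb\times\zbb}(z,\bar z)$-determining set, completing the argument.

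I anticipate no genuine obstacle: the entire content is carried by Lemma \ref{1}, and the only point needing (routine) care is that an open disc in $\cbb_*$ really does contain infinitely many concentric circles avoiding $0$. Should one wish to bypass Lemma \ref{1} altogether, an equivalent route is to multiply $p$ by $z^N\bar z^N$ for $N$ large enough to clear all negative exponents, obtaining an honest polynomial $q$ in the two real variables $\RE z,\IM z$ that vanishes on the nonempty open set $\{z \colon |z-c|<r\}$; the identity principle for polynomials then gives $q\equiv 0$ on $\cbb$, whence $p\equiv 0$ on $\cbb_*$.
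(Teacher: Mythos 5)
Your argument is correct and is exactly the reduction the paper intends: the lemma is stated without proof immediately after the remark that Lemma \ref{1} supplies $\cbb_{\zbb\times\zbb}(z,\bar z)$-determining sets, and your observation that an open disc in $\cbb_*$ contains infinitely many concentric circles avoiding the origin is precisely the missing (routine) link. Both your main route via Lemma \ref{1} and your alternative via clearing negative powers and the identity principle are sound.
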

The following simple fact is crucial for further
investigations. It enables us to deal with the complex
plane instead of a larger and less handy dual $*$-semigroup
$\xfr_{\nfr_+}$.
    \begin{lem} \label{fdesr}
A subset $Z$ of $\cbb_*$ is determining for
$\cbb_{\nfr_+}(z,\bar z)$ if and only if the set $Y_Z$
defined in \eqref{yz} is determining for
$\pcal(\xfr_{\nfr_+})$.
    \end{lem}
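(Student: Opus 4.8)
The plan is to reduce both occurrences of the word \emph{determining} to linear independence of one and the same system of monomial functions, after which the asserted equivalence becomes a tautology. First I would record the map $\iota\colon Z \ni z \mapsto (z, z\,{\bar z}^{\,-1}) \in Y_Z$ from \eqref{yz}; it is a bijection of $Z$ onto $Y_Z$, since the first coordinate recovers $z$. Because $Z \subset \cbb_*$, every element of $Y_Z$ has nonzero first coordinate, so only the first branch of \eqref{crucial} is ever used: for $(m,n) \in \nfr_\pluss$ and $z \in Z$ one has $\widehat{(m,n)}(\iota(z)) = z^m \bar z^{\,n}$. In other words, $\iota$ intertwines the restriction $\widehat{(m,n)}|_{Y_Z}$ with the monomial function $z \mapsto z^m \bar z^{\,n}$ on $Z$.

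Next I would translate each determining condition into a statement about linear independence. By its definition (condition (ii) of Proposition \ref{char}), $Y_Z$ is determining for $\pcal(\xfr_{\nfr_+})$ exactly when the system $\{\widehat{(m,n)}|_{Y_Z}\}_{(m,n)\in\nfr_\pluss}$ is linearly independent in $\cbb^{Y_Z}$. On the other side, by the definition of a $\cbb_{\nfr_+}(z,\bar z)$-determining set together with Lemma \ref{1}, $Z$ is determining for $\cbb_{\nfr_+}(z,\bar z)$ exactly when the system $\{z \mapsto z^m \bar z^{\,n}\}_{(m,n)\in\nfr_\pluss}$ is linearly independent in $\cbb^Z$.

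To close the argument I would invoke the pullback $\cbb^{Y_Z} \ni f \mapsto f\circ\iota \in \cbb^Z$, which is a linear isomorphism because $\iota$ is a bijection. By the intertwining identity from the first step it carries $\widehat{(m,n)}|_{Y_Z}$ onto $z\mapsto z^m\bar z^{\,n}$ for every $(m,n)$, hence it sends the first system onto the second. As a linear isomorphism preserves linear independence, the two systems are linearly independent simultaneously, which is precisely the claimed equivalence.

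I expect no genuine obstacle; the proof is essentially an unwinding of the relevant definitions. The only points that demand care are to use the linear-independence formulation of \emph{determining} rather than point separation, since $Y_Z$ need not be a subsemigroup of $\xfr_{\nfr_+}$, and to note explicitly that the hypothesis $Z \subset \cbb_*$ keeps the degenerate branches of \eqref{crucial} (those with $z=0$, i.e.\ the points of $\nul \times \mathbb T$) entirely out of play.
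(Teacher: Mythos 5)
Your proposal is correct and is essentially the paper's own (very terse) proof spelled out: the authors likewise combine the description \eqref{crucial} of $\widehat{\nfr_+}$ on points with nonzero first coordinate with Lemma \ref{1}, reducing both notions of \emph{determining} to linear independence of the monomial system $\{z\mapsto z^m\bar z^{\,n}\}_{(m,n)\in\nfr_\pluss}$ transported along the bijection $Z\to Y_Z$. Your explicit caveats about using the linear-independence formulation and about the degenerate branch of \eqref{crucial} being excluded are exactly the points the paper leaves implicit.
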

    \begin{proof}
Apply Lemma \ref{1} and the description \eqref{crucial} of $\widehat
{\nfr_+}$.
    \end{proof}
The next lemma guarantees that a subset $Z$ of $\cbb$ is
determining for $\cbb[z,\bar z]$ if and only if $Z
\setminus \nul$ is determining for $\cbb_{\nfr_+}(z,\bar
z)$.
   \begin{lem} \label{determ}
If $Z$ is a subset of $\cbb $, then the following
conditions are equivalent
   \begin{enumerate}
   \item[(i)] $Z$ is determining for $\cbb[z,\bar z]$,
   \item[(ii)] $Z\setminus Z_0$ is determining
for $\cbb[z,\bar z]$ whenever $Z_0 \subset \cbb$ is
finite,
   \item[(iii)] $Z\setminus (Z_0 \cup \nul)$ is determining
for $\cbb_{\zbb\times \zbb}(z,\bar z)$ whenever $Z_0
\subset \cbb$ is finite.
   \end{enumerate}
   \end{lem}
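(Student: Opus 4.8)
The plan is to route all four conditions through (i). The implications (ii)$\Rightarrow$(i) and (iii)$\Rightarrow$(i) I expect to be immediate: taking $Z_0=\varnothing$ in (ii) reproduces (i) verbatim, while taking $Z_0=\varnothing$ in (iii) says that $Z\setminus\nul$ is determining for $\cbb_{\zbb\times\zbb}(z,\bar z)$. Since $\cbb[z,\bar z]\subset\cbb_{\zbb\times\zbb}(z,\bar z)$, any $p\in\cbb[z,\bar z]$ vanishing on $Z$ vanishes in particular on $Z\setminus\nul\subset\cbb_*$, whence it vanishes on all of $\cbb_*$; this is precisely (i). So the real content lies in (i)$\Rightarrow$(ii) and (i)$\Rightarrow$(iii).

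The engine for both is a single multiplication device combined with an integral-domain argument. By Lemma \ref{1} the functions $z^m\bar z^n$ are linearly independent on $\cbb$, so $\cbb[z,\bar z]$ is isomorphic to the polynomial ring in two commuting indeterminates; in particular it is an integral domain, and any polynomial in $z,\bar z$ vanishing on $\cbb_*$ vanishes identically. For a finite set $Z_0\subset\cbb$ I would set $q(z,\bar z)=\prod_{z_0\in Z_0}(z-z_0)(\bar z-\bar z_0)$, a nonzero element of $\cbb[z,\bar z]$ vanishing exactly on $Z_0$. To prove (i)$\Rightarrow$(ii), take $p\in\cbb[z,\bar z]$ vanishing on $Z\setminus Z_0$; then $qp$ vanishes at every point of $Z$ (on $Z\setminus Z_0$ because $p$ does, on $Z\cap Z_0$ because $q$ does), so (i) forces $qp=0$, and cancelling the nonzero factor $q$ in the integral domain yields $p=0$.

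For (i)$\Rightarrow$(iii) I would first reduce a rational function to a polynomial. Given $p\in\cbb_{\zbb\times\zbb}(z,\bar z)$ vanishing on $Z\setminus(Z_0\cup\nul)$, finiteness of its support allows me to pick an integer $N\Ge 1$ with $\tilde p\okr(z\bar z)^N p\in\cbb[z,\bar z]$. Then $q\tilde p$ vanishes throughout $Z$: on $Z\setminus(Z_0\cup\nul)$ because $\tilde p$ does, on $Z\cap Z_0$ because $q$ does, and at the origin (should it lie in $Z$) because the factor $(z\bar z)^N$ annihilates it. Hence (i) gives $q\tilde p=0$, the domain property (with $q\neq0$) gives $\tilde p=0$, and therefore $p=\tilde p/(z\bar z)^N=0$ on $\cbb_*$, which is (iii). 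I expect the only delicate point to be the bookkeeping at the origin: $\cbb[z,\bar z]$-determining sets may contain $0$ whereas $\cbb_{\zbb\times\zbb}(z,\bar z)$-determining sets are confined to $\cbb_*$, and it is exactly the factor $(z\bar z)^N$ that both clears the denominator of $p$ and absorbs the point $0$. Beyond this, no genuinely hard step arises.
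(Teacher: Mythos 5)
Your argument is essentially the paper's: the paper proves the lemma by multiplying a rational function vanishing on $Z\setminus(Z_0\cup\nul)$ by $z^N\bar z^N(z-\lambda_1)\cdots(z-\lambda_k)$ for a sufficiently large $N$, obtaining a polynomial that vanishes on all of $Z$, and then invoking (i). The paper arranges the implications as the cycle (i)$\Rightarrow$(iii)$\Rightarrow$(ii)$\Rightarrow$(i) and finishes by dividing off the zero set of the multiplier and using continuity on $\cbb_*$, whereas you hub everything through (i) and cancel in the integral domain $\cbb[z,\bar z]\cong\cbb[X,Y]$; these are cosmetic differences. One small repair is needed in your (i)$\Rightarrow$(iii): choosing $N$ merely so that $(z\bar z)^N p\in\cbb[z,\bar z]$ does not guarantee that this polynomial vanishes at the origin --- for $p=z^{-1}\bar z^{-1}$ and $N=1$ it is the constant $1$ --- and the justification ``the factor $(z\bar z)^N$ annihilates it'' is not a product of finite values at $z=0$, since $p$ may be undefined there. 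What you need is $N$ strictly larger than the clearing exponent, so that the constant coefficient $a_{-N,-N}$ of $(z\bar z)^N p$ vanishes; this is precisely the paper's ``for a sufficiently large positive integer $N$''.
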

   \begin{proof}
(i)$\Rightarrow$(iii) Suppose that $Z_0 = \{\lambda_1,
\ldots, \lambda_k\}$ and take $p \in \cbb_{\zbb \times
\zbb}(z,\bar z)$ such that $p(z,\bar z)= 0$ for all $z\in
Z\setminus (Z_0 \cup \nul)$. Then for a sufficiently large
positive integer $N$ the rational function $q(z, \bar z)
\okr z^N \bar z^N (z-\lambda_1) \ldots
(z-\lambda_k)p(z,\bar z)$ is a polynomial in $z$ and $\bar
z$ such that $q(z, \bar z) =0$ for all $z\in Z$. By (i),
$q(z,\bar z) = 0$ for all $z\in \cbb$, and so $p(z, \bar z)
= 0$ for all $z \in \cbb \setminus (Z_0 \cup \nul)$. As a
consequence, $p(z, \bar z) = 0$ for all $z \in \cbb_*$.

Implications (iii)$\Rightarrow$(ii) and
(ii)$\Rightarrow$(i) are evident.
   \end{proof}
   A sequence $\{c_{m,n}\}_{m,n=0}^\infty \subset \cbb$ is
called a {\em complex moment sequence} if there exists a
positive Borel measure $\mu$ on $\cbb$ such that
   \begin{align*}
c_{m,n} = \int_\cbb z^m \bar z^n \mu(\D z), \quad m,n \Ge
0.
   \end{align*}
Such a measure $\mu$ is called a {\em representing
measure} of $\{c_{m,n}\}_{m,n=0}^\infty$; it is by no
means unique. If a representing measure $\mu$ is
unique, then $\{c_{m,n}\}_{m,n=0}^\infty$ is called a
{\em determinate} complex moment sequence. For this
and related questions we refer the reader to
\cite{sh-t}, \cite{fug} and \cite{st-sz1}.

   The following result is an extension of Theorem 1 of
\cite{st-sz1} as well as of the complex version of the
Riesz-Haviland theorem (see Theorem B). The first of
the two theorems can be seen as the equivalence
(i)$\Leftrightarrow$(iii) below with $T = \zbb_\pluss \times
\zbb_\pluss$, while the other as the equivalence
(i)$\Leftrightarrow$(iv) with the same $T$.
   \vspace{2ex}
   \begin{center}
\unitlength=1.5pt
\begin{picture}(80,80)
\put(0,10){\line(1,0){8}}
\put(12,10){\line(1,0){7}}
\put(21,10){\line(1,0){7}}
\put(32,10){\line(1,0){7}}
\put(41,10){\line(1,0){7}}
\put(52,10){\line(1,0){6}}
\put(62,10){\line(1,0){7}}
\put(71,10){\vector(1,0){9}}

\put(10,0){\line(0,1){8}}
\put(10,12){\line(0,1){7}}
\put(10,21){\line(0,1){7}}
\put(10,32){\line(0,1){7}}
\put(10,41){\line(0,1){7}}
\put(10,52){\line(0,1){6}}
\put(10,62){\line(0,1){7}}
\put(10,71){\vector(0,1){9}}

\put(76,4){\text{$m$}} \put(4,77){\text{$n$}}

\put(10,70){\circle{2}}
\put(20,70){\makebox(0,0)[cc]{\pplus}}
\put(30,70){\circle{2}}
\put(40,70){\makebox(0,0)[cc]{\pplus}}
\put(50,70){\circle{2}} \put(60,70){\circle{2}}
\put(70,70){\circle*{2.5}}

\put(10,60){\makebox(0,0)[cc]{\pplus}}
\put(20,60){\makebox(0,0)[cc]{\pplus}}
\put(30,60){\makebox(0,0)[cc]{\pplus}}
\put(40,60){\circle{2}} \put(60,60){\circle*{2.5}}
\put(50,60){\circle{2}} \put(70,60){\circle{2}}

\put(10,50){\makebox(0,0)[cc]{\pplus}}
\put(20,50){\circle{2}}
\put(30,50){\makebox(0,0)[cc]{\pplus}}
\put(40,50){\circle{2}} \put(50,50){\circle*{2.5}}
\put(60,50){\circle{2}} \put(70,50){\circle{2}}

\put(10,40){\circle{2}} \put(20,40){\circle{2}}
\put(30,40){\makebox(0,0)[cc]{\pplus}}
\put(40,40){\circle*{2.5}} \put(50,40){\circle{2}}
\put(60,40){\circle{2}}
\put(70,40){\makebox(0,0)[cc]{\pplus}}

\put(10,30){\makebox(0,0)[cc]{\pplus}}
\put(20,30){\circle{2}} \put(30,30){\circle*{2.5}}
\put(40,30){\makebox(0,0)[cc]{\pplus}}
\put(50,30){\makebox(0,0)[cc]{\pplus}}
\put(60,30){\makebox(0,0)[cc]{\pplus}}
\put(70,30){\circle{2}}

\put(10,20){\circle{2}} \put(20,20){\circle*{2.5}}
\put(30,20){\circle{2}} \put(40,20){\circle{2}}
\put(50,20){\circle{2}}
\put(60,20){\makebox(0,0)[cc]{\pplus}}
\put(70,20){\makebox(0,0)[cc]{\pplus}}

\put(10,10){\circle*{2.5}} \put(20,10){\circle{2}}
\put(30,10){\makebox(0,0)[cc]{\pplus}}
\put(40,10){\circle{2}}
\put(50,10){\makebox(0,0)[cc]{\pplus}}
\put(60,10){\makebox(0,0)[cc]{\pplus}}
\put(70,10){\circle{2}}
\end{picture}
   \\[1.5ex]
   {\small Figure 1.\ $T$ appearing in Theorem \ref{zespmom} -- an
example:\
    \\
{\large $\bullet$} - obligatory data, $\pplus$ - additional
data, {\footnotesize $\boldsymbol{\circ}$} - missing data.}
   \end{center}
   \vspace{1ex}
   \begin{thm} \label{zespmom}
Let $T$ be a symmetric subset of $\zbb_\pluss \times
\zbb_\pluss$ $($i.e.\ $(n,m)\in T$ for all $(m,n) \in T$$)$
containing the diagonal $\{(n,n)\colon n \in
\zbb_\pluss\}$, and let $Z\subset \cbb$ be a determining
set for\/ $\cbb[z,\bar z]$. Then for any system of complex
numbers $\{c_{m,n}\}_{(m,n)\in T}$, the following
conditions are equivalent{\em :}
   \begin{enumerate}
   \item[(i)] there exists a positive Borel measure $\mu$ on
$\cbb$ such that
   \begin{align*}
   c_{m,n} = \int_\cbb z^m \bar z^n \mu(\D z), \quad (m,n)
\in T,
   \end{align*}
   \item[(ii)] there exists a complex moment sequence
$\{\tilde c_{m,n}\}_{m,n = 0}^\infty$ such that
$\tilde c_{m,n} = c_{m,n}$ for all $(m,n) \in T$,
   \item[(iii)] there exists\,\footnote{\;Notation $m+n\Ge 0$
has to be understood as $(m,n) \in \zbb \times \zbb$ and $m+n \Ge
0$.} $\{{\tilde c}_{m,n}\}_{m+n \Ge 0} \subset \cbb$ such that
${\tilde c}_{m,n}=c_{m,n}$ for all $(m,n) \in T$, and
$\sum_{\substack {m+n\Ge 0\\p+q\Ge 0}} \tilde c_{m+q,n+p}
\lambda_{m,n} \bar \lambda_{p,q} \Ge 0$ for all finite systems
$\{\lambda_{m,n}\}_{m+n\Ge 0} \subset \cbb$,
   \item[(iv)] $\sum_{(m,n)\in T} p_{m,n} c_{m,n}
\Ge 0$ for every finite system $\{ p_{m,n} \}_{(m,n)
\in T} \subset \cbb$ such that $\sum_{(m,n)\in T}
p_{m,n} z^m \bar z^n \Ge 0$ for all $z \in \cbb$,
   \item[(v)] $\sum_{(m,n)\in T} p_{m,n} c_{m,n}
\Ge 0$ for every finite system $\{ p_{m,n} \}_{(m,n) \in T} \subset
\cbb$ for which there exist finitely many rational functions $q_1,
\ldots, q_k \in \cbb_{\nfr_+}(z,\bar z)$ such that $\sum_{(m,n)\in
T} p_{m,n} z^m \bar z^n = \sum_{j=1}^k |q_j(z, \bar z)|^2$ for all
$z \in \cbb$, $z\neq 0$,
   \item[(vi)] $\sum_{(m,n)\in T} p_{m,n} c_{m,n}
\Ge 0$ for every finite system $\{ p_{m,n} \}_{(m,n) \in T} \subset
\cbb$ for which there exist finitely many rational functions $q_1,
\ldots, q_k \in \cbb_{\nfr_+}(z,\bar z)$ such that $\sum_{(m,n)\in
T} p_{m,n} z^m \bar z^n = \sum_{j=1}^k |q_j(z, \bar z)|^2$ for all
$z \in Z$, $z\neq 0$.
   \end{enumerate}
   \end{thm}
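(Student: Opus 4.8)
The plan is to derive the entire chain of equivalences from the abstract scalar machinery of Theorem~\ref{main2+}, applied to the $*$-semigroup $\sfr = \nfr_\pluss$, and then to translate each of its four conditions into the concrete statements (i)--(vi) by means of the identification \eqref{crucial} of $\xfr_{\nfr_+}$ with $\varOmega \cup (\nul \times \mathbb T)$. First I would put $\varphi(m,n) = c_{m,n}$ for $(m,n) \in T$ and check that the hypotheses of Theorem~\ref{main2+} hold: $\nfr_\pluss$ is semiperfect (cf.\ \cite{st-sz1}), while $T$ is symmetric and, since the Hermitian elements of $\nfr_\pluss$ are exactly the diagonal $\{(n,n) \colon n \in \zbb_\pluss\}$, condition \eqref{sym_diag} is satisfied. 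For the determining sets I would use two choices. Taking $Z = \cbb_*$, which is trivially determining for $\cbb_{\nfr_+}(z,\bar z)$, Lemma~\ref{fdesr} shows that $\varOmega = Y_{\cbb_*}$ is determining for $\pcal(\xfr_{\nfr_+})$. On the other hand, since $Z$ is determining for $\cbb[z,\bar z]$, Lemma~\ref{determ} gives that $Z \setminus \nul$ is determining for $\cbb_{\zbb\times\zbb}(z,\bar z)$, hence a fortiori for $\cbb_{\nfr_+}(z,\bar z)$, so $Y_{Z\setminus\nul}$ is determining as well. Applying Theorem~\ref{main2+} with each determining set supplies the core equivalences, after which it remains to match conditions.

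Next I would carry out the dictionary. The extension condition (i) of Theorem~\ref{main2+} says $\varphi$ extends to a positive definite $\psi$ on $\nfr_\pluss$; writing $s = (m,n)$, $t = (p,q)$ one has $t^*s = (m+q, n+p)$, so this is precisely the quadratic form condition of (iii) for the extension $\tilde c_{m,n} = \psi(m,n)$. The integral condition (ii) of Theorem~\ref{main2+} reads $\varphi(s) = \int_{\xfr_{\nfr_+}} \hat s \, \D\mu$; here I would split $\mu$ over $\varOmega$ and over $\nul \times \mathbb T$. By \eqref{crucial} the transform $\widehat{(m,n)}$ on $\varOmega$ is $z^m \bar z^n$ (under $(z, z\bar z^{-1}) \leftrightarrow z$), while on $\nul \times \mathbb T$ it vanishes for $(m,n) \neq (0,0)$ and equals $1$ for $(m,n) = (0,0)$; pushing the $\varOmega$-part forward to $\cbb_*$ and absorbing $\mu(\nul \times \mathbb T)$ as an atom at the origin produces a Borel measure on $\cbb$ representing $\{c_{m,n}\}$, which is (i), the reverse passage being immediate. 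Likewise the nonnegativity condition (iii) of Theorem~\ref{main2+} demands $\varLambda_\varphi(p) \Ge 0$ for $p \in \pcal_T(\xfr_{\nfr_+})$ nonnegative on $\xfr_{\nfr_+}$; by \eqref{crucial} such a $p$ is a genuine polynomial $\sum_{(m,n)\in T} p_{m,n} z^m \bar z^n$ on $\cbb_*$ whose value on $\nul\times\mathbb T$ equals its limit $p_{0,0}$ at the origin, so nonnegativity on $\xfr_{\nfr_+}$ is equivalent to nonnegativity on all of $\cbb$, which is condition (iv).

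For the sum-of-squares condition (iv) of Theorem~\ref{main2+}, the functions in $\pcal(Y)$ correspond via \eqref{yz} and \eqref{crucial} to rational functions from $\cbb_{\nfr_+}(z,\bar z)$ restricted to the relevant set, so with $Y = \varOmega$ this is exactly condition (v) (testing on $\cbb_*$) and with $Y = Y_{Z\setminus\nul}$ exactly condition (vi) (testing on $Z \setminus \nul$). Since the core conditions (i)--(iii) of Theorem~\ref{main2+} do not depend on the determining set --- which is also the content of Proposition~\ref{row} --- applying that theorem for both choices shows (v) and (vi) are each equivalent to the common core. Finally I would settle (i)$\Leftrightarrow$(ii) directly: a representing measure $\mu$ as in (i) defines a complex moment sequence $\tilde c_{m,n} = \int_\cbb z^m \bar z^n \, \D\mu$ for all $m,n \Ge 0$ extending $\{c_{m,n}\}$, the diagonal membership $(n,n) \in T$ guaranteeing finiteness of every such integral through $|z|^{m+n} \Le \tfrac12(|z|^{2m} + |z|^{2n})$, and conversely a representing measure of the extending sequence serves in (i).

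The step demanding the most care is the translation between the abstract dual $*$-semigroup and the complex plane, namely the correct handling of the component $\nul \times \mathbb T$ of $\xfr_{\nfr_+}$: both in the measure-theoretic direction, where its mass must be shown to collapse to a single atom at the origin affecting only the $(0,0)$ moment, and in the Riesz--Haviland direction, where nonnegativity of the polynomial over $\nul \times \mathbb T$ must be seen to follow automatically from its value at $0$. Everything else is a routine matching of indices.
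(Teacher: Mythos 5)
Your proposal is correct, and it rests on the same machinery as the paper's proof (Theorem \ref{main2+} for $\sfr=\nfr_\pluss$ together with Lemmas \ref{fdesr} and \ref{determ}), but it is organized quite differently. The paper proves a cycle: (i)$\Rightarrow$(ii) directly (the diagonal hypothesis giving summability of all polynomials), (ii)$\Leftrightarrow$(iii) by citing Theorem 1 of \cite{st-sz1}, the implications (ii)$\Rightarrow$(i), (i)$\Rightarrow$(iv), (iv)$\Rightarrow$(v) as evident, (v)$\Rightarrow$(vi) by the one-line determining-set argument, and only the closing step (vi)$\Rightarrow$(iii) via the implication (iv)$\Rightarrow$(i) of Theorem \ref{main2+} applied with $Y=Y_{Z\setminus\nul}$. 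You instead invoke the full four-way equivalence of Theorem \ref{main2+} (twice, with $Y=\varOmega$ and $Y=Y_{Z\setminus\nul}$, or equivalently once plus Proposition \ref{row}) and translate each abstract condition into one of (i), (iii), (iv), (v), (vi). What this buys is self-containedness --- you do not need \cite[Theorem 1]{st-sz1} for (ii)$\Leftrightarrow$(iii), and the proof makes transparent that each concrete condition is an avatar of one of the four abstract ones. What it costs is exactly the extra dictionary work you flag at the end: the correspondence between positive measures on ${\sf M}_{\nfr_+}$ and positive Borel measures on $\cbb$, where the mass on $\nul\times\mathbb T$ collapses to an atom at the origin affecting only $c_{0,0}$, and the observation that nonnegativity of $p=\sum_{(m,n)\in T}p_{m,n}\widehat{(m,n)}$ on $\nul\times\mathbb T$ reduces to $p_{0,0}\Ge 0$, which follows by continuity from nonnegativity on $\cbb_*$ since $T\subset\zbb_\pluss\times\zbb_\pluss$. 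Both of these check out (measurability of $Y_B$ for Borel $B\subset\cbb_*$ follows from ${\sf M}_{\nfr_+}$-measurability of $\widehat{(1,0)}$), so the argument is sound; the paper's routing simply avoids having to write them down.
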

   \begin{proof}
   (i)$\Rightarrow$(ii) Since $\{(n,n)\colon n \in
\zbb_\pluss\} \subset T$, we see that complex
polynomials in $z$ and $\bar z$ are summable with
respect to $\mu$. Hence, we can define ${\tilde
c}_{m,n} = \int _\cbb z^m \bar z^n \mu(\D z)$ for
$m,n \in \zbb_\pluss$, which is the desired complex
moment sequence.

   (ii)$\Leftrightarrow$(iii) This can be deduced from
\cite[Theorem 1]{st-sz1}.

   Implications (ii)$\Rightarrow$(i), (i)$\Rightarrow$(iv) and
(iv)$\Rightarrow$(v) are evident.

   (v)$\Rightarrow$(vi) By Lemma \ref{determ}, $Z \setminus
\nul$ is a determining set for $\cbb_{\nfr_+}(z,\bar z)$.
Since both sides of the equality in (vi) are members of
$\cbb_{\nfr_+}(z,\bar z)$, and they coincide on $Z
\setminus \nul$, we deduce that they are equal on $\cbb_*$.

   (vi)$\Rightarrow$(iii) In view of Lemmata \ref{fdesr}
and \ref{determ}, $Y_{Z\setminus \nul}$ is a determining
set for $\pcal(\xfr_{\nfr_+})$. Since $\nfr_\pluss$ is a
semiperfect $*$-semigroup (this can be deduced
from\footnote{\;\label{fut}Indeed, in view of the
discussion preceding Lemma \ref{wn1}, we see that our
meaning of semiperfectness is wider than that of
\cite[Remark 7]{st-sz1}. The interested reader can verify
that for finitely generated unital commutative $*$-semigroups,
like $\nfr_\pluss$, both these notions are equivalent.}
\cite[Remark 7]{st-sz1}), we can apply implication
(iv)$\Rightarrow$(i) of Theorem \ref{main2+} to
$\sfr=\nfr_\pluss$, $Y=Y_{Z\setminus \nul}$ and
$\varphi(m,n)=c_{m,n}$. This completes the proof.
   \end{proof}

Let us point out the difference between the usual meaning of the truncated (complex) moment problem (where a finite system of complex numbers is given) and that appearing in Theorem \ref{zespmom}. One of our
assumptions requires for a system of complex numbers
$\{c_{m,n}\}_{(m,n)\in T}$, which is the given data,
to include all the diagonal entries
$c_{m,m}$, $m=0,1,2, \ldots$ This enabled us to show
that the most direct analogue of the complex version
of the Riesz-Haviland theorem for
the truncated moment problem in our meaning is true. In
a recent paper \cite[Example 2.1]{cu-fi}, Curto and
Fialkow have shown that this is not the case for the
truncated moment problem in the usual sense. Instead,
they have found an analogue of the Riesz-Haviland
theorem, which requires extending appropriate linear
functionals on polynomials of degree limited by $2n$ to positive linear functionals on polynomials of degree
limited by $2n+2$ (cf.\ \cite[Theorem
2.2.]{cu-fi}).
   \vspace{1ex}
   \subsubsection{\bf The case of $(\nfr_\pluss)_{\mathfrak h} \not\subset T$}
In this subsection we show that the assumption that
$T$ contains the diagonal $\{(n,n)\colon n \in
\zbb_\pluss\}$ cannot be dismissed without destroying
the equivalences (i)$\Leftrightarrow$(ii) as well as
(i)$\Leftrightarrow$(iv) of Theorem \ref{zespmom}.
   \begin{exa}\label{dziury}
Clearly, the implication (ii)$\Rightarrow$(i) of Theorem
\ref{zespmom} holds for an arbitrary subset $T$ of
$\zbb_\pluss \times \zbb_\pluss$. The reverse implication
does not hold in general even if $T$ contains all but a finite number of the diagonal elements
(hence all but a finite number of the moments exist). This can be shown for any subset $T$ of
$\zbb_\pluss \times \zbb_\pluss$ such that $(0,0) \notin T$
and
   \begin{align*}
\{(m,n) \in \zbb_\pluss \times \zbb_\pluss \colon m,n\Ge
k\} \subset T
   \end{align*}
with some integer $k \Ge 1$. For this, define the positive
Borel measure $\mu$ on $\cbb$ via $\D\mu(z) = |z|^{-2}
\eta(z) \D V(z)$, where $V$ stands for the planar Lebesgue
measure and $\eta$ is the characteristic function of the
disc $\varDelta \okr {\{z\in \cbb \colon |z|\Le 1\}}$.
Since $(0,0) \not \in T$, the system $c_{m,n} \okr
\int_\cbb z^m \bar z^n \D \mu(z)$, $(m,n) \in T$, is well
defined and fulfils the condition (i) of Theorem
\ref{zespmom}. Suppose that, contrary to our claim, it
satisfies the condition (ii) of Theorem \ref{zespmom}. Then
there exists a finite positive Borel measure $\nu$ on
$\cbb$ such that $c_{m,n} = \int_\cbb z^m \bar z^n
\D \nu(z)$ for all $(m,n) \in T$. Hence, we have
   \begin{align*}
\int_\varDelta z^m \bar z^n |z|^{2k}\D V(z) = \int_\cbb z^m
\bar z^n |z|^{2k+2} \D \nu(z), \quad m,n \in \zbb_\pluss.
   \end{align*}
Since the left-hand side represents a determinate complex
moment sequence indexed by $(m,n)$ (as it has a compactly
supported representing measure), we see that
   \begin{align*}
|z|^{2k} \eta(z) \D V(z) = |z|^{2k+2} \D \nu(z).
   \end{align*}
It follows that $\mu (\cbb_*) = \nu (\cbb_*) < \infty$,
which is a contradiction because $\mu$ is not a finite
measure. This proves our claim.
   \end{exa}
In view of \cite[Theorem 1]{st-sz1}, the extension
procedure required in the condition (ii) of Theorem
\ref{zespmom} can be realized in two steps: first we have
to extend the system $\{c_{m,n}\}_{(m,n)\in T}$ to a
positive definite system over the $*$-semigroup $\nfr$, and
then to a positive definite system over $\nfr_\pluss$. The
first step can be done in a more explicit way for
$*$-ideals $T$ of $\nfr$ (i.e.\ $T=T^*$ and $T + \nfr
\subset T$) as in \cite{fhsz}; see also
\cite{li-ma,fhsz2,js} for earlier attempts in this
direction. However, the $*$-ideal technique is not
applicable in the other step when extending positive
definite functions from $\nfr$ to $\nfr_\pluss$. This
situation requires methods invented in \cite{st-sz1}.

We next consider the case of the equivalence
(i)$\Leftrightarrow$(iv) of Theorem \ref{zespmom}.
   \begin{exa}
It is evident that the implication (i)$\Rightarrow$(iv) of
Theorem \ref{zespmom} holds for sets $T$ not necessarily
containing the diagonal $\{(m,m) \colon m \in
\zbb_\pluss\}$. The reverse implication does not hold in
general, which can be shown for all sets $T$ such that
   \begin{align}\label{kael}
   \{(k,k),(l,l)\} \subset T \subset \{(m,n) \in
   \zbb_\pluss \times \zbb_\pluss \colon m,n \Ge k\},
   \end{align}
where $k, l$ are integers such that $1 \Le k < l$. Indeed,
consider the system $\boldsymbol c = \{ \delta_{0,
m+n-2k}\}_{(m,n) \in T}$, where $\delta_{i,j}$ is the
Kronecker delta function. It can be readily checked that
$\boldsymbol c$ satisfies the condition (iv) of Theorem
\ref{zespmom} (see the limit formula in the proof of (b)$\Rightarrow$(a) of
Proposition \ref{havi}). Suppose that, contrary to our
claim, it satisfies the condition (i) of Theorem
\ref{zespmom} with some positive Borel measure $\mu$ on
$\cbb$. Then $1 = \delta_{0,0} = \int_\cbb |z|^{2k}
\D\mu(z)$ and $0 = \delta_{0,2(l-k)} = \int_\cbb |z|^{2l}
\D \mu(z)$. The latter equality implies that $\mu$ is
supported in $\nul$, which contradicts the former (because
$k \Ge 1$). In the extremal case $T$ may consist of only
two elements, which is the smallest possible number
required for the above argument because for one point sets
$T$ of the form $\{(k,k)\}$ with $k \in \zbb_\pluss$ the
equivalence (i)$\Leftrightarrow$(iv) of Theorem
\ref{zespmom} is valid. Note also that if $T=
\{(0,0),(l,l)\}$ with $l \Ge 1$, then the system
$\{c_{m,n}\}_{(m,n) \in T} \subset \cbb$ given by $c_{0,0}
= 0$ and $c_{l,l} = 1$ satisfies the condition (iv) of
Theorem \ref{zespmom} but not (i).
   \end{exa}

The ensuing proposition shows that for a subclass of sets
$T$ satisfying \eqref{kael} the condition (iv) of Theorem
\ref{zespmom} leads to a representation similar (but not
equivalent if $k\Ge 1$) to that in (i) of Theorem \ref{zespmom}.
Proposition \ref{havi} is somehow in the flavour of
\cite{fhsz1} where backward extensions of moment sequences
are considered.
   \begin{pro}\label{havi}
Let $T$ be a symmetric subset of $\zbb_\pluss \times
\zbb_\pluss$ such that
   \begin{align}\label{transT}
   \{(m,m) \in \zbb_\pluss \times \zbb_\pluss \colon m \Ge
k\} \subset T \subset \{(m,n) \in \zbb_\pluss \times
\zbb_\pluss \colon m,n \Ge k\}
   \end{align}
with some integer $k \Ge 0$, and let $\{c_{m,n}\}_{(m,n)
\in T}$ be a system of complex numbers. Then the following
conditions are equivalent{\em :}
   \begin{enumerate}
   \item[(a)] $\sum_{(m,n)\in T} p_{m,n} c_{m,n}
\Ge 0$ for every finite system $\{ p_{m,n} \}_{(m,n) \in T}
\subset \cbb$ such that $\sum_{(m,n)\in T} p_{m,n} z^m \bar
z^n \Ge 0$ for all $z \in \cbb$,
   \item[(b)] there exist a positive Borel measure $\mu$ on $\cbb$ and a real
$a\Ge 0$ such that $\mu(\nul) = 0$ and
   \begin{align}\label{delta0}
c_{m,n} = \int_\cbb z^m \bar z^n \D\mu(z) + a
\delta_{0,m+n-2k}, \quad (m,n) \in T.
   \end{align}
   \end{enumerate}
In particular, $\int_\cbb |z|^{2k} \D\mu(z) < \infty$ for
the measure $\mu$ appearing in {\em (b)}.
   \end{pro}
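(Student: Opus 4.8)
The plan is to establish the two implications separately, reducing the substantial direction to the implication (iv)$\Rightarrow$(i) of Theorem \ref{zespmom} by dividing out the common factor $|z|^{2k}$.

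For (b)$\Rightarrow$(a), the crux is a \emph{limit formula} isolating the corner coefficient. Suppose \eqref{delta0} holds and let $\{p_{m,n}\}_{(m,n)\in T}$ satisfy $P(z)\okr\sum_{(m,n)\in T}p_{m,n}z^m\bar z^n\Ge0$ for all $z\in\cbb$. By \eqref{transT} every index obeys $m,n\Ge k$, and the only index with $m+n=2k$ is $(k,k)$; hence $|z|^{-2k}P(z)=p_{k,k}+\sum_{m+n>2k}p_{m,n}z^{m-k}\bar z^{n-k}$ on $\cbb_*$, and letting $z\to0$ yields $p_{k,k}=\lim_{z\to0}|z|^{-2k}P(z)\Ge0$ (the sum consists of monomials vanishing at the origin). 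Plugging \eqref{delta0} into $\sum_{(m,n)\in T}p_{m,n}c_{m,n}$ and noting that the Kronecker term selects only $(m,n)=(k,k)$, I obtain $\sum_{(m,n)\in T}p_{m,n}c_{m,n}=\int_\cbb P\,\D\mu+a\,p_{k,k}$, which is nonnegative because $P\Ge0$, $\mu\Ge0$ and $a,p_{k,k}\Ge0$.

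For (a)$\Rightarrow$(b), I shift indices. Put $T'\okr\{(m-k,n-k)\colon(m,n)\in T\}$ and $\tilde c_{p,q}\okr c_{p+k,q+k}$ for $(p,q)\in T'$. By \eqref{transT}, $T'$ is a symmetric subset of $\zbb_\pluss\times\zbb_\pluss$ containing the full diagonal $\{(j,j)\colon j\in\zbb_\pluss\}$, so Theorem \ref{zespmom} (with any determining set, e.g.\ $Z=\cbb$) is available for the data $(T',\{\tilde c_{p,q}\})$. I first verify its condition (iv): given $\{\tilde p_{p,q}\}_{(p,q)\in T'}$ with $Q(z)\okr\sum_{(p,q)\in T'}\tilde p_{p,q}z^p\bar z^q\Ge0$ on $\cbb$, setting $p_{m,n}\okr\tilde p_{m-k,n-k}$ gives $\sum_{(m,n)\in T}p_{m,n}z^m\bar z^n=|z|^{2k}Q(z)\Ge0$ on $\cbb$, so (a) yields $\sum_{(p,q)\in T'}\tilde p_{p,q}\tilde c_{p,q}=\sum_{(m,n)\in T}p_{m,n}c_{m,n}\Ge0$. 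Here the decisive point is simply that multiplication by $|z|^{2k}$ preserves nonnegativity, so (a) and condition (iv) for $(T',\tilde c)$ match up.

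By the implication (iv)$\Rightarrow$(i) of Theorem \ref{zespmom}, there is a positive Borel measure $\nu$ on $\cbb$ with $\tilde c_{p,q}=\int_\cbb z^p\bar z^q\,\D\nu$ for all $(p,q)\in T'$; since $(0,0)\in T'$, the measure $\nu$ is finite. I then define $\mu$ by $\mu(\nul)=0$ and $\D\mu(z)=|z|^{-2k}\D\nu(z)$ on $\cbb_*$, and set $a\okr\nu(\nul)\Ge0$. For $(m,n)\in T$ the identity $z^{m-k}\bar z^{n-k}=|z|^{-2k}z^m\bar z^n$ on $\cbb_*$, together with the fact that $z^{m-k}\bar z^{n-k}$ takes the value $\delta_{0,m+n-2k}$ at the origin, splits $\tilde c_{m-k,n-k}$ into $\int_\cbb z^m\bar z^n\,\D\mu+a\,\delta_{0,m+n-2k}$, which is exactly \eqref{delta0}. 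Finally, $\int_\cbb|z|^{2k}\,\D\mu=\nu(\cbb_*)\Le\nu(\cbb)<\infty$ gives the concluding assertion. The only genuinely delicate step is this last bookkeeping at the origin: one must account for the atom $\nu(\nul)$ as the separate term $a\,\delta_{0,m+n-2k}$ while arranging $\mu(\nul)=0$, which is the hallmark of such backward extension problems.
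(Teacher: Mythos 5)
Your proposal is correct and follows essentially the same route as the paper: the same index shift $T_k=\{(m-k,n-k)\colon (m,n)\in T\}$ reducing (a)$\Rightarrow$(b) to the implication (iv)$\Rightarrow$(i) of Theorem \ref{zespmom}, the same splitting of the resulting measure $\nu$ into $\D\mu=|z|^{-2k}\D\nu$ on $\cbb_*$ plus the atom $a=\nu(\nul)$, and the same limit formula $p_{k,k}=\lim_{z\to 0}|z|^{-2k}p(z,\bar z)\Ge 0$ for (b)$\Rightarrow$(a).
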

   \begin{proof} Let $\varLambda \colon \cbb_T[z, \bar z]
\to \cbb$ be a linear functional determined by $\varLambda(
z^m \bar z^n) = c_{m,n}$ for all $(m,n) \in T$. A
polynomial $p \in \cbb[z,\bar z]$ is called {\em
nonnegative} if $p(z,\bar z) \Ge 0$ for all $z\in \cbb$.

   (a)$\Rightarrow$(b) Note that the set $T_k \okr
\{(m-k,n-k) \colon (m,n) \in T\}$ satisfies the assumptions
of Theorem \ref{zespmom}. Since $\varLambda (z^k \bar z^k
q(z,\bar z)) \Ge 0$ for all nonnegative polynomials $q \in
\cbb_{T_k}[z,\bar z]$, we infer from the implication
(iv)$\Rightarrow$(i) of Theorem \ref{zespmom} that there
exists a finite positive Borel measure $\nu$ on $\cbb$ such
that $\varLambda(z^k \bar z^k q(z, \bar z)) = \int_\cbb
q(z, \bar z) \D \nu(z)$ for all $q \in \cbb_{T_k}[z,\bar
z]$. Hence
   \begin{align*}
c_{m,n} = \varLambda( z^k \bar z^k (z^{m-k} \bar z^{n-k}))
& =
\int_\cbb z^{m-k} \bar z^{n-k} \D \nu(z) \\
& = \int_{\cbb_*} z^m \bar z^n \D \mu(z) + \nu(\nul)
\delta_{0,m+n-2k}, \quad (m,n) \in T,
   \end{align*}
where $\mu$ is the positive Borel measure on $\cbb$ given
by
   \begin{align*}
\mu(\sigma) = \int_{\sigma \setminus \nul} \frac 1
{|z|^{2k}} \D \nu(z), \quad \sigma \text{ -- a Borel subset
of } \cbb.
   \end{align*}
This $\mu$ and $a \okr \nu(\nul)$ satisfy (b) as well as
the ``in particular'' part of the conclusion.

(b)$\Rightarrow$(a) Pick a nonnegative polynomial $p \in
\cbb_T [z,\bar z]$ and denote its $(k,k)$-coefficient by
$p_{k,k}$. Then $\varLambda (p) = \int_\cbb p(z, \bar z) \D
\mu(z) + a p_{k,k}$ which is nonnegative because $p$ is
nonnegative and $p_{k,k} = \lim_{z\to 0} |z|^{-2k} p(z,\bar
z)$.
   \end{proof}
Clearly, the implication (b)$\Rightarrow$(a) of Proposition
\ref{havi} holds with the same proof if it is only assumed
that $T$ is a (not necessarily symmetric) subset of
the set $\{(m,n) \in \zbb_\pluss \times \zbb_\pluss \colon m,n \Ge k\}$ with some integer $k
\Ge 0$. Note also that for every integer $k \Ge 1$ and for
every symmetric subset $T$ of $\zbb_\pluss \times
\zbb_\pluss$ satisfying \eqref{transT}, we may find a
system $\{c_{m,n}\}_{(m,n)\in T}$ fulfilling the condition
(b) of Proposition \ref{havi} with $\mu$ such that $\int_\cbb |z|^{2k} \D\mu(z) <
\infty$ and $\int_\cbb |z|^{2l} \D\mu(z) =\infty$ for all
$l=0,\ldots, k-1$. Indeed, such a system can be produced
with the help of the formula \eqref{delta0} with $a=0$ and
$\mu$ given by $\D\mu(z) = |z|^{-2k} \eta(z) \D V(z)$,
where $\eta$ and $V$ are as in Example \ref{dziury}.
   \vspace{1ex}
   \subsubsection{\bf The lack of symmetry\label{bsym}}
The phenomenon described below is of general nature
and as such occurs in other instances, like the
truncated multidimensional trigonometric moment
problem (cf.\ Theorem \ref{trmpr}) and the truncated
two-sided complex moment problem (cf.\ Theorem
\ref{zespmom+}). Let us discuss it in the case of the
truncated complex moment problem, which is related to
the $*$-semigroup $\nfr_\pluss$, leaving the other
cases for the reader.

Consider a not necessarily symmetric set $T$ such that
   \begin{equation} \label{primaaprilis}
   (\nfr_\pluss)_{\mathfrak h} \subset T \subset \nfr
   \end{equation}
and look at what happens to the equivalence of the
conditions (i)--(vi) of Theorem \ref{zespmom} (the other
assumptions of Theorem \ref{zespmom} being still in force).
First of all, we see the two natural candidates for
replacing $T$ by a symmetric set: $T\cup T^*$ and $T\cap
T^*$, both satisfying \eqref{primaaprilis}. As shown below, the set $T\cup
T^*$ plays an essential role in conditions (i)--(iii) while
$T\cap T^*$ does so in (iv)--(vi); because these two sets
for very nonsymmetric $T$'s may differ in the extreme the
aforesaid feature seems to be worthy of taking a closer
look at.

Call a system $\{c_{m,n}\}_{(m,n) \in T} \subset \cbb$ {\em
symmetrizable} if
   \begin{equation*}
   c_{m,n}= \overline{c_{n,m}}, \quad (m,n) \in T\cap T^*.
   \end{equation*}
If $\{c_{m,n}\}_{(m,n) \in T}$ is symmetrizable, then its
{\em symmetrization} $\{c_{m,n}^\natural\}_{(m,n) \in T
\cup T^*}$:
   \begin{align*}
c_{m,n}^\natural = \begin{cases}c_{m,n}, & (m,n) \in T,
   \\[1ex]
\overline{c_{n,m}}, & (m,n) \in T^*,
   \end{cases}
   \end{align*}
is well defined. One can verify that if a system
$\{c_{m,n}\}_{(m,n) \in T} \subset \cbb$ satisfies (i) (respectively:\ (ii), (iii))
 on $T$, then it is symmetrizable
and its symmetrization satisfies (i) (respectively:\ (ii), (iii))
on $T\cup T^*$, and vice versa. Theorem
\ref{zespmom} implies that, via the symmetrization
procedure, for {\em any} set $T$ obeying
\eqref{primaaprilis} the conditions (i)--(iii) are
equivalent on $T$.

Regarding the conditions (iv)--(vi), their prospective
equivalence needs to be justified in a different way.
Namely, a system $\{c_{m,n}\}_{(m,n) \in T} \subset \cbb$
fulfils (iv) (respectively:\ (v), (vi)) on $T$ if and only
if the restricted system $\{c_{m,n}\}_{(m,n) \in T\cap
T^*}$ fulfils (iv) (respectively:\ (v), (vi)) on $T\cap
T^*$ (hence $c_{m,n}$'s over $T \setminus T^*$ are
irrelevant). Indeed, this can be deduced from the fact that
a real-valued polynomial $p \in \cbb[z,\bar z]$ belongs to
$\cbb_T[z,\bar z]$ if and only if it belongs to
$\cbb_{T\cap T^*} [z, \bar z]$ (hint:\ conjugate the
polynomial $p$ and deduce that $p_{m,n} =
\overline{p_{n,m}}$, where $p_{m,n}$ are the coefficients
of $p$). Hence, by Theorem \ref{zespmom}, for {\em any} set
$T$ satisfying \eqref{primaaprilis} the conditions
(iv)--(vi) are equivalent on $T$. If this happens, then the
system $\{c_{m,n}\}_{(m,n) \in T}$ is symmetrizable.

Since, in fact, the conditions (i)--(iii) concern the
extension of the system $\{c_{m,n}\}_{(m,n) \in T}$ to
$T\cup T^*$, while (iv)--(vi) deal with its restriction to
$T\cap T^*$, it is to be expected that they cannot be
altogether equivalent for arbitrary $T$. Indeed, consider
any nonsymmetric set $T$ satisfying \eqref{primaaprilis}
and take $(k,l) \in T \setminus T^*$. Suppose that a system
$\{c_{m,n}\}_{(m,n) \in T} \subset \cbb$ fulfils the
conditions (iv)--(vi) on $T$ (e.g.\ any restriction to $T$
of a complex moment sequence does). Define the new system
   \begin{align*}
\tilde c_{m,n} = \begin{cases}c_{m,n}, & (m,n)\in T
\setminus \{(k,l)\},
   \\[1ex]
\sqrt{c_{k,k}c_{l,l} + 1}, & (m,n)=(k,l).
   \end{cases}
   \end{align*}
Owing to the above discussion the so defined system
satisfies the conditions (iv)--(vi), but fails to satisfy
any of the conditions (i)--(iii), because otherwise by the
Cauchy-Schwarz inequality we would have $|\tilde c_{k,l}|^2
\Le \tilde c_{k,k} \tilde c_{l,l} = c_{k,k} c_{l,l}$, a
contradiction. This means that Theorem \ref{zespmom} is not
true as long as $T$ satisfying \eqref{primaaprilis} is not
symmetric. In other words, symmetricity of $T$ is a
necessary condition for Theorem \ref{zespmom} to hold.

However, it becomes now clear that for an {\em arbitrary}
set $T$ satisfying \eqref{primaaprilis} a seemingly more
general version of Theorem \ref{zespmom} can be considered.
Putting it precisely, for a symmetrizable system
$\{c_{m,n}\}_{(m,n)\in T}$ all the conditions (i)--(vi)
remain equivalent if in the conditions (iv)--(vi) the
system $\{c_{m,n}\}_{(m,n)\in T}$ is replaced by its
symmetrization $\{c_{m,n}^\natural\}_{(m,n)\in T \cup
T^*}$.
   \vspace{1ex}
   \subsubsection{\label{sumsquare} \bf Sum-square representation}
We say that a locally convex topology $\tau$ on the linear
space $\cbb[z, \bar z]$ is {\em evaluable} if the set
   \begin{align*}
\{\lambda \in \cbb\colon \text{the evaluation } E_\lambda
\colon \cbb[z,\bar z] \ni p \mapsto p(\lambda, \bar
\lambda) \in \cbb \text{ is $\tau$-continuous}\}
   \end{align*}
is dense in $\cbb$. The class of such topologies is rich.
In particular, it contains every locally convex topology on
$\cbb[z, \bar z]$ generated by the family
$\{E_\lambda\colon \lambda \in Z\}$, where $Z$ is a dense
subset of $\cbb$. This fact, the linear independence of
$\{E_\lambda \colon \lambda \in \cbb\}$ and \cite[Theorem
3.10]{rud} imply that there exist two evaluable topologies
such that the only linear functional on $\cbb[z, \bar z]$
continuous with respect to each of them is the zero
functional.

One can deduce from Artin's solution of the 17th Hilbert problem
(cf.\ \cite{art} or \cite[Theorem 6.1.1]{bcr}; see also
\cite{rez,put-vas} for the case of positive homogeneous polynomials)
that for every nonnegative polynomial $p \in \cbb[z,\bar z]$ (i.e.\
$p(z,\bar z) \Ge 0$ for all $z \in \cbb$), there exist finitely many
rational functions $q_1, \ldots, q_n$ in two complex variables such
that $p(z, \bar z) = \sum_{j=1}^n |q_j(z,\bar z)|^2$ for all $z \in
\cbb$ except singularities of the right-hand side of the equality.
The question arises whether general rational functions in the above
representation of $p$ can be replaced by specific ones from
$\cbb_{\nfr_+}(z, \bar z)$. Though we are unable to answer this
question in full generality, we can do it in the case in which the
nonnegativity of $p$ and its sum-square representation is considered
on a closed proper subset $Z$ of $\cbb$ which is determining for
$\cbb[z,\bar z]$. For convenience we denote by $\varSigma^2(Z)$ the
set of all polynomials $q \in \cbb[z, \bar z]$ for which there exist
finitely many rational functions $q_1, \ldots, q_n \in
\cbb_{\nfr_+}(z,\bar z)$ such that
   \begin{align} \label{sum2}
q(z, \bar z) = \sum_{j=1}^n |q_j(z, \bar z)|^2, \quad
z \in Z \setminus \nul.
   \end{align}
Perhaps this is the right place to mention that if $q \in \cbb[z,
\bar z]$ is of the form \eqref{sum2} with $Z=\cbb$ and $q_1,
\ldots, q_n \in \cbb_{\zbb \times \zbb}(z, \bar z)$, then by
Proposition \ref{ogrwym} the rational functions $q_1, \ldots, q_n$
must belong to $\cbb_{\nfr_+}(z, \bar z)$.
   \begin{pro} \label{sumk}
Let $Z$ be a closed proper subset of $\cbb$ which is
determining for $\cbb[z,\bar z]$ and let $\tau$ be an
evaluable locally convex topology on $\cbb[z, \bar z]$.
Then there exists a polynomial $p \in \cbb[z,\bar z]$ which
is nonnegative on $Z$ and which does not belong to the
$\tau$-closure of $\varSigma^2(Z)$. In particular, $p$ is
not in $\varSigma^2(Z)$.
   \end{pro}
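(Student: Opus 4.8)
\emph{Plan.} The plan is to separate $p$ from the cone $\varSigma^2(Z)$ by means of a single $\tau$-continuous evaluation functional. The decisive point is that, although the defining relation \eqref{sum2} controls a member $q$ of $\varSigma^2(Z)$ only on $Z\setminus\nul$, it in fact forces $q$ to be nonnegative on the whole plane $\cbb$. Once this is known, every evaluation $E_\lambda$ is nonnegative on $\varSigma^2(Z)$, and it remains only to exhibit a polynomial that is nonnegative on $Z$ yet strictly negative at a point of $\cbb\setminus Z$ at which some $E_\lambda$ happens to be $\tau$-continuous.

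\emph{Upgrading nonnegativity from $Z$ to $\cbb$.} First I would fix $q\in\varSigma^2(Z)$ and write $q(z,\bar z)=\sum_{j=1}^n|q_j(z,\bar z)|^2$ for $z\in Z\setminus\nul$, with $q_1,\dots,q_n\in\cbb_{\nfr_+}(z,\bar z)$. Since $\cbb_{\nfr_+}(z,\bar z)$ is a $*$-subalgebra of $\cbb_{\zbb\times\zbb}(z,\bar z)$ containing $\cbb[z,\bar z]$, both $q$ and $\sum_{j}|q_j|^2=\sum_{j}q_j\overline{q_j}$ are members of $\cbb_{\nfr_+}(z,\bar z)$, and they coincide on $Z\setminus\nul$. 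By Lemma \ref{determ} the set $Z\setminus\nul$ is determining for $\cbb_{\nfr_+}(z,\bar z)$, so the identity $q(z,\bar z)=\sum_{j}|q_j(z,\bar z)|^2$ persists for all $z\in\cbb_*$. Hence $q\Ge0$ on $\cbb_*$ and, by continuity, $E_\lambda(q)=q(\lambda,\bar\lambda)\Ge0$ for every $\lambda\in\cbb$ and every $q\in\varSigma^2(Z)$.

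\emph{Choice of functional, construction of $p$, and separation.} Since $Z$ is a proper closed subset, $\cbb\setminus Z$ is nonempty and open; because $\tau$ is evaluable, the set of $\lambda$ for which $E_\lambda$ is $\tau$-continuous is dense, so I may pick $\lambda_0\in\cbb\setminus Z$ with $E_{\lambda_0}$ $\tau$-continuous. With $d=\dist(\lambda_0,Z)>0$, define the polynomial $p(z,\bar z)=(z-\lambda_0)(\bar z-\bar\lambda_0)-\frac12 d^2$. On $Z$ we have $|z-\lambda_0|^2\Ge d^2$, so $p\Ge\frac12 d^2>0$ there, whereas $E_{\lambda_0}(p)=p(\lambda_0,\bar\lambda_0)=-\frac12 d^2<0$. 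The set $\{q\in\cbb[z,\bar z]\colon E_{\lambda_0}(q)\in[0,\infty)\}$ is $\tau$-closed and, by the previous paragraph, contains $\varSigma^2(Z)$, hence also $\overline{\varSigma^2(Z)}^{\,\tau}$; since $E_{\lambda_0}(p)<0$, the polynomial $p$ lies outside $\overline{\varSigma^2(Z)}^{\,\tau}$, and a fortiori outside $\varSigma^2(Z)$, which is exactly the assertion.

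\emph{Main obstacle.} I expect the only genuine difficulty to be the upgrading step: one must observe that the finite sum $\sum_{j}|q_j|^2$ occurring in \eqref{sum2} is again an element of $\cbb_{\nfr_+}(z,\bar z)$, so that $q$ and this sum are two members of $\cbb_{\nfr_+}(z,\bar z)$ agreeing on a determining set. This is precisely what allows the determining property of $Z\setminus\nul$ to convert nonnegativity on $Z$ into nonnegativity on all of $\cbb$, and it is here that both hypotheses on $Z$---that it be determining and that it be proper---are used. The remaining construction is elementary.
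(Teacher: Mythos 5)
Your proposal is correct and follows essentially the same route as the paper's proof: both use Lemma \ref{determ} to see that the sum-of-squares representation on $Z\setminus\nul$ persists on all of $\cbb_*$ (the paper phrases this as $\varSigma^2(Z)=\varSigma^2(\cbb)$), then separate with a $\tau$-continuous evaluation $E_\lambda$ at a point $\lambda\in\cbb\setminus Z$ applied to a polynomial of the form $|z-\lambda|^2-\text{const}$. The only differences are cosmetic (your constant $\tfrac12 d^2$ versus the paper's $\varepsilon^2$ with a ball contained in the complement).
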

   \begin{proof} Denote by $\mathscr P^+(Z)$ the set
of all polynomials $q \in \cbb[z,\bar z]$ which are
nonnegative on $Z$ (i.e.\ $q(z,\bar z) \Ge 0$ for all $z
\in Z$) and write $\overline{\varSigma^2(Z)}^{\,\tau}$ for
the $\tau$-closure of $\varSigma^2(Z)$. It follows from our
assumptions that there exists $\lambda \in \cbb \setminus
Z$ such that the evaluation $E_\lambda$ is
$\tau$-continuous. By the determining property of
$Z\setminus \nul$ (cf.\ Lemma \ref{determ}), we have
$\varSigma^2(Z) = \varSigma^2(\cbb)$. Hence
   \begin{align} \label{qua}
E_\lambda(q) \Ge 0, \quad q \in
\overline{\varSigma^2(Z)}^{\,\tau}.
   \end{align}
Since $\lambda \notin Z$, there exists real $\varepsilon >
0$ such that $\{z \in \cbb \colon |z - \lambda| \Le
\varepsilon\} \subset \cbb \setminus Z$. It is then clear
that the polynomial $p_\varepsilon (z,\bar z) \okr |z -
\lambda|^2 -\varepsilon^2$ belongs to $\mathscr P^+(Z)$.
Note that $p_\varepsilon \notin
\overline{\varSigma^2(Z)}^{\,\tau}$. Indeed, otherwise
\eqref{qua} implies that $E_\lambda (p_\varepsilon) \Ge 0$,
which contradicts $E_\lambda (p_\varepsilon) = -
\varepsilon^2$.
   \end{proof}
   The proof of Proposition \ref{sumk} remains unchanged if
we assume only that $\tau$ is a locally convex topology on
$\cbb[z, \bar z]$ for which there exists $\lambda \in \cbb
\setminus Z$ such that the evaluation $E_\lambda$ is
$\tau$-continuous.
   \vspace{1ex}
   \subsubsection{\bf Determining sets versus supports of
representing measures} Notice that no determining set is
mentioned in the condition (v) of Theorem \ref{zespmom}. On
the other hand, this condition remains equivalent to the
variety of conditions obtained from (vi) by taking all $\cbb[z,\bar z]$-determining subsets $Z$ of
$\cbb$. The same observation refers to the mutual
relationship between (iv) and (vi). Our intension now is to
see what happens if in (iv) the phrase ``$z \in\cbb$'' is
replaced by ``$z\in Z$''; denote such a modified condition
by (iv)$_Z$ (the same operation applied to (v) leads to
(vi)). Evidently, if a system $\{c_{m,n}\}_{(m,n) \in T}$
satisfies (iv)$_Z$, then it also satisfies (iv).
 We will discuss the following two questions:
   \begin{enumerate}
   \item[$1^\circ$] given a symmetric set $T$ obeying
\eqref{primaaprilis} and a nonzero system
$\{c_{m,n}\}_{(m,n)\in T}$ of complex numbers, is (iv)
equivalent to (iv)$_Z$ for any $\cbb[z,\bar z]$-determining
set $Z\subset \cbb$?
   \item[$2^\circ$] given a symmetric set $T$ obeying
\eqref{primaaprilis} and a
closed\,\footnote{\label{4z}\;Note that (iv)$_Z$ is
equivalent to (iv)$_{\overline Z}$ for any $Z\subset \cbb$.}
proper subset $Z$ of $\cbb$, is (iv) equivalent to (iv)$_Z$
for any system $\{c_{m,n}\}_{(m,n)\in T} \subset \cbb$?
   \end{enumerate}

We will use the notation
   \begin{align*}
Y_{\mathrm r} = \{|z|^2 \colon z\in Y\} \text{ and }
\sqrt{Y_{\mathrm r}} = \{|z| \colon z\in Y\} \ \text{for
any } Y\subset \cbb.
   \end{align*}
Observe that if $Y$ is closed, then so are $Y_{\mathrm r}$
and $\sqrt{Y_{\mathrm r}}$. In order to handle the
questions just posed, we need the following lemma.
   \begin{lem} \label{zet er}
If $T$ obeys {\em
\eqref{primaaprilis}}, $Z$ is a subset of $\cbb$ and
$\{c_{m,n}\}_{(m,n) \in T} \subset \cbb$ is a system
satisfying {\em (iv)}$_Z$, then $\{c_{m,m}\}_{m=0}^\infty$
is a Stieltjes moment sequence which has a representing
measure supported in $\overline{Z_{\mathrm r}}$.
   \end{lem}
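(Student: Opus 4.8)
The plan is to recognize the diagonal subproblem as a one-variable (Stieltjes) moment problem in the real variable $t=|z|^2$, and then to invoke the real Riesz-Haviland theorem (Theorem~A). First I would observe that since \eqref{primaaprilis} forces $(\nfr_\pluss)_{\mathfrak h}=\{(m,m)\colon m\in\zbb_\pluss\}\subset T$, every diagonal entry $c_{m,m}$ is part of the given data. Applying (iv)$_Z$ to the polynomial $p(z,\bar z)=z^m\bar z^m=|z|^{2m}$, which is nonnegative on all of $\cbb$ and in particular on $Z$, yields $c_{m,m}\Ge 0$; thus each $c_{m,m}$ is a nonnegative real number, and the linear functional $L\colon\rbb[t]\to\rbb$ determined by $L(t^m)=c_{m,m}$ is well defined.

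The crucial step is the translation of positivity. Given a real polynomial $q(t)=\sum_m q_m t^m$ that is nonnegative on $\overline{Z_{\mathrm r}}$, I would form $p(z,\bar z)\okr q(|z|^2)=\sum_m q_m\,z^m\bar z^m$, which lies in $\cbb_T[z,\bar z]$ because only diagonal indices occur. Since $Z_{\mathrm r}\subset\overline{Z_{\mathrm r}}$, the hypothesis $q\Ge 0$ on $\overline{Z_{\mathrm r}}$ gives $p(z,\bar z)=q(|z|^2)\Ge 0$ for every $z\in Z$; hence (iv)$_Z$ applies and produces $L(q)=\sum_m q_m c_{m,m}\Ge 0$. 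In other words, $L$ is nonnegative on every real polynomial that is nonnegative on the closed set $\overline{Z_{\mathrm r}}$.

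Finally I would apply the real Riesz-Haviland theorem (Theorem~A) with $K=\overline{Z_{\mathrm r}}$: the positivity just established produces a positive Borel measure $\sigma$ supported in $\overline{Z_{\mathrm r}}$ with $c_{m,m}=L(t^m)=\int t^m\,\D\sigma(t)$ for all $m$ (finiteness of $\sigma$ being guaranteed by $c_{0,0}=\int\D\sigma<\infty$, as $(0,0)\in T$). Because $\overline{Z_{\mathrm r}}\subset[0,\infty)$, this exhibits $\{c_{m,m}\}_{m=0}^\infty$ as a Stieltjes moment sequence with representing measure supported in $\overline{Z_{\mathrm r}}$, as claimed. The only genuinely substantive point is the reduction in the middle paragraph—seeing that the diagonal slice of the two-dimensional positivity condition (iv)$_Z$ is exactly the Stieltjes positivity condition under $t=|z|^2$, and that nonnegativity transfers correctly from $\overline{Z_{\mathrm r}}$ to $Z$; once this viewpoint is adopted, the appeal to Riesz-Haviland is routine.
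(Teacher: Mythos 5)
Your proposal is correct and follows essentially the same route as the paper: substitute $t=|z|^2$ to see that (iv)$_Z$ restricted to diagonal monomials is exactly the Riesz--Haviland positivity condition on $Z_{\mathrm r}$ (equivalently, by continuity, on $\overline{Z_{\mathrm r}}$), and then apply Theorem~A with $\varkappa=1$. The paper's proof is a three-line version of precisely this argument.
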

   \begin{proof}
If $p(x) = \sum_{j=0}^k p_j x^j \in \cbb[x]$ is nonnegative on
$Z_{\mathrm r}$, then $p(z\bar z) \in \cbb_T[z,\bar z]$ is
nonnegative on $Z$, and consequently, by (iv)$_Z$,
$\sum_{j=0}^k p_j c_{j,j} \Ge 0$. Applying Theorem A
with $\varkappa = 1$, we see that the
Stieltjes moment sequence $\{c_{m,m}\}_{m=0}^\infty$ has a
desired representing measure.
   \end{proof}
The answer to the question $1^\circ$ is in the negative.
Indeed, take a nonzero system $\{c_{m,n}\}_{(m,n)\in T}
\subset \cbb$ satisfying (iv) and suppose that, contrary to
our claim, the system satisfies (iv)$_Z$ for all
$\cbb[z,\bar z]$-determining sets $Z \subset \cbb$. In
particular, this is the case for $Z_1=\{z \in \cbb\colon
|z| \Le 1\}$ and $Z_2 = \{z \in \cbb\colon 2\Le |z| \Le
3\}$ (for their determining property see Lemma \ref{1}). By
Lemma \ref{zet er}, $\{c_{m,m}\}_{m=0}^\infty$ is a
Stieltjes moment sequence having two representing measures supported in $[0,1]$ and
$[4,9]$, respectively. Since each Hamburger moment sequence
with a compactly supported representing measure is
determinate (cf.\ \cite{fug}), we deduce that the support
of the unique representing measure of
$\{c_{m,m}\}_{m=0}^\infty$ is empty. Therefore $c_{m,m}=0$
for all $m \in \zbb_\pluss$. By Theorem \ref{zespmom}\,(i)
and the Cauchy-Schwarz inequality, we have $|c_{m,n}|^2 \Le
c_{m,m} c_{n,n} = 0$ for all $(m,n) \in T$, a
contradiction.

Regarding the question $1^\circ$ with $T=\nfr$, it is
possible to find a complex moment sequence
$\{c_{m,n}\}_{m,n=0}^\infty$ which fulfils (iv)$_Z$ with
uncountably many pairwise disjoint sets $Z$. To see this
consider any indeterminate Hamburger moment sequence
$\{a_n\}_{n=0}^\infty \subset \rbb$ and set $c_{m,n} =
a_{m+n}$ for $m,n\in \zbb_\pluss$. It follows from
\cite[Theorem 4.11]{sim} that $\{a_n\}_{n=0}^\infty$ has a
family $\mathscr W$ (necessarily of cardinality continuum)
of representing measures $\mu$ whose closed supports $\supp
\mu$ are infinite and pure point (i.e.\ with no cluster points), and form
a partition of the real line. It is now easily seen that
the closed sets $Z_\mu \okr \{z \in \cbb \colon \mathfrak
{Re}\hspace{.1ex} z \in \supp \mu\}$, $\mu\in \mathscr W$,
are determining for $\cbb[z,\bar z]$ (see Lemma \ref{1})
and the sequence $\{c_{m,n}\}_{m,n=0}^\infty$ satisfies
(iv)$_{Z_\mu}$ for every $\mu \in \mathscr W$. Note that
the family $\{Z_\mu\}_{\mu\in \mathscr W}$ is a partition
of $\cbb$. An example of an indeterminate Hamburger moment
sequence $\{a_n\}_{n=0}^\infty$ with explicitly computed
pure point supports of representing measures forming a
partition of $\rbb$ may be found in \cite{c-s-sz} (see also \cite{ted} for an explicit example of an indeterminate Stieltjes moment sequence with continuum of representing measures).

The answer to the question $2^\circ$ depends essentially on
the interplay between the sets $T$ and $Z$. We do not
demand that $Z$ be $\cbb[z,\bar z]$-determining, however,
this can be guaranteed in all the examples presented below.
We will first take a closer look at the extremal case
$T=\nfr$ (the other extremality $T =
(\nfr_\pluss)_\mathfrak h$ is discussed below). Then any
determinate nonzero complex moment sequence with the
representing measure supported in $\cbb \setminus Z$
satisfies (iv), but not (iv)$_Z$, the latter being a
consequence of Theorem B. Such a moment
sequence always exists; e.g.\ it can be produced from any
nonzero finite positive Borel measure on $\cbb$ compactly
supported in $\cbb \setminus Z$; for the determinacy of the
so obtained moment sequence see \cite{fug}. Hence, in this
particular case, the answer to the question $2^\circ$ is in
the negative. An alternative way to achieve this conclusion
is by applying Proposition \ref{k-l=1} below.

Another instance of the negative answer to $2^\circ$ is when
$T$ and $Z$ are as in $2^\circ$ and
$Z_\mathrm r \varsubsetneq [0,\infty)$. For this we may
consider a nonzero complex moment sequence
$\{c_{m,n}\}_{m,n=0}^\infty$ with a representing measure
compactly supported in the open set $\{\lambda \in \cbb
\colon |\lambda|^2 \notin Z_\mathrm r\}$. By the measure
transport theorem (or Lemma \ref{zet er}) the Stieltjes
moment sequence $\{c_{m,m}\}_{m=0}^\infty$ has a
representing measure compactly supported in $[0,\infty)
\setminus Z_\mathrm r$ and as such is determinate. It turns
out that the system $\{c_{m,n}\}_{(m,n) \in T}$ satisfies
(iv), but not (iv)$_Z$. Indeed, if it satisfied (iv)$_Z$,
then by Lemma \ref{zet er} the moment sequence
$\{c_{m,m}\}_{m=0}^\infty$ would have a representing
measure supported in $Z_\mathrm r$. Again we would deduce
that the representing measure of $\{c_{m,m}\}_{m=0}^\infty$
is the zero measure and hence $c_{m,n} = 0$ for all $m,n
\in \zbb_\pluss$, a contradiction.

However, the answer to the question $2^\circ$ is in the
affirmative when $T = (\nfr_\pluss)_\mathfrak h$ and $Z_\mathrm r
= [0,\infty)$. To see this it suffices to notice that for every
$Y\subset \cbb$ the system $\{c_{m,n}\}_{(m,n) \in
(\nfr_+)_\mathfrak h}$ satisfies (iv)$_Y$ if and only if the
sequence $\{c_{m,m}\}_{m=0}^\infty$ satisfies the {\em
Riesz-Haviland positivity condition on} $Y_\mathrm r$, i.e.\
$\sum_{j=0}^k p_j c_{j,j} \Ge 0$ for every polynomial $p (x)=
\sum_{j=0}^k p_j x^j \in \cbb[x]$ which is nonnegative on
$Y_{\mathrm r}$. Since $Z_\mathrm r = \cbb_\mathrm r =
[0,\infty)$, we get the desired conclusion.

We now provide more elaborate examples of $T$ and $Z$ for
which the answer to the question $2^\circ$ remains
affirmative. Fix integers $l> k \Ge 0$ and set
   \begin{align*}
\mathscr T_{k,l} = (\nfr_\pluss)_\mathfrak h \cup \{(k,l),
(l,k)\}.
   \end{align*}
Clearly, $\mathscr T_{k,l}$ is symmetric and fulfils
\eqref{primaaprilis}. By Proposition \ref{lambda^2} below,
the answer to the question $2^\circ$ is in the affirmative
whenever $l-k$ is even, $T= \mathscr T_{k,l}$ and $Z = \mathscr Z
\big( \frac{2\pi}{l-k} \big)$, where
   \begin{align*} 
\mathscr Z(\alpha)\okr \{\varrho\,\E^{\I t} \colon t\in [
0, \alpha],\ \varrho \Ge 0 \}, \quad \alpha \in [0,2\pi];
   \end{align*}
note that due to Lemma \ref{niepust} the set $\mathscr
Z(\alpha)$ is $\cbb[z,\bar z]$-determining for $\alpha >
0$. The case of $l-k$ being an arbitrary integer greater
than or equal to $2$ will be settled affirmatively in
Proposition \ref{lambda^2new} below, however its proof
making use of Theorem \ref{zespmom} is no longer
elementary. What is more, while Proposition \ref{lambda^2}
is stated purely in terms of the system $\{c_{m,n}\}_{(m,n)
\in T}$, this seems to be impossible in the case of
Proposition \ref{lambda^2new} (apart from some restricted
cases in which the square root can be approximated by
polynomials in $L^2$-norm with respect to a representing
measure of $\{c_{m,m}\}_{m=0}^\infty$, e.g.\ when the
representing measure is N-extremal, cf.\ \cite{sim}).
According to footnote \ref{4z} and the equality $(\bar
Z)_\mathrm r = \overline{Z_\mathrm r}$, there is no loss of
generality in assuming that $Z$ is closed.
   \begin{pro} \label{lambda^2}
Let $T = \mathscr T_{k,l}$ with $\varkappa \okr (l-k)/2$ being a
positive integer and let $Z$ be a closed $\cbb[z,\bar
z]$-determining subset of $\cbb$ such that
   \begin{align} \label{kappaout}
\Big\{ \varrho\,\E^{\I t} \colon t\in \Big[0,
\frac{2\pi}{l-k} \Big),\ \varrho \in \sqrt{Z_\mathrm r}
\Big\} \subset Z.
   \end{align}
Then for any system $\{c_{m,n}\}_{(m,n) \in T} \subset
\cbb$ the following conditions are equivalent{\em :}
   \begin{enumerate}
   \item[(a)] $\{c_{m,n}\}_{(m,n) \in T}$ satisfies {\em (iv)$_Z$},
   \item[(b)] the sequence $\{c_{m,m}\}_{m=0}^\infty$
satisfies the Riesz-Haviland positivity condition on $Z_\mathrm
r$, $c_{l,k} = \overline {c_{k,l}}$ and $|c_{k,l}| \Le
c_{k+\varkappa, k+ \varkappa}$.
   \end{enumerate}
In particular, if $\mathscr Z\big( \frac{2\pi}{l-k} \big)
\subset Z \varsubsetneq \cbb$, then the answer to
the question $2^\circ$ is in the affirmative.
   \end{pro}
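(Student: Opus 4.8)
The plan is to reduce the whole statement to the one-variable Riesz functional $L$ on $\cbb[x]$ determined by $L(x^j)=c_{j,j}$, together with the behaviour of a generic member of $\cbb_T[z,\bar z]$ under the substitution $x=|z|^2$. Writing $z=\varrho\,\E^{\I t}$ and $w=\E^{\I(l-k)t}$, and using that $l-k=2\varkappa$ is even so that $k+\varkappa=(k+l)/2\in\zbb_\pluss$ and $\varrho^{\,k+l}=\varrho^{\,2(k+\varkappa)}=x^{k+\varkappa}$, each $p\in\cbb_T[z,\bar z]$ with diagonal part $q(x)=\sum_m p_{m,m}x^m$ and off-diagonal coefficients $p_{k,l},p_{l,k}$ becomes
\[
p(z,\bar z)=q(x)+x^{k+\varkappa}\bigl(p_{k,l}\,\bar w+p_{l,k}\,w\bigr),\quad x=|z|^2,\ w=\E^{\I(l-k)t}.
\]
The decisive point is that as $t$ runs over $\bigl[0,\tfrac{2\pi}{l-k}\bigr)$ the factor $w$ runs over the whole of $\mathbb T$; combined with \eqref{kappaout} this means that for each $\varrho\in\sqrt{Z_\mathrm r}$ the point $z=\varrho\,\E^{\I t}$ lies in $Z$ for every prescribed $w\in\mathbb T$. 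Throughout, $\varLambda$ denotes the functional $p\mapsto\sum_{(m,n)\in T}p_{m,n}c_{m,n}$, so that $\varLambda(p)=L(q)+p_{k,l}c_{k,l}+p_{l,k}c_{l,k}$.

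For (a)$\Rightarrow$(b) I would first obtain the Riesz--Haviland positivity on $Z_\mathrm r$ exactly as in the proof of Lemma \ref{zet er}, by testing (iv)$_Z$ against the diagonal polynomials $p(z\bar z)$ with $p\in\cbb[x]$ nonnegative on $Z_\mathrm r$. To extract the two off-diagonal conditions I would feed into (iv)$_Z$ the polynomials
\[
p_{w_0}(z,\bar z)\okr 2\,|z|^{2(k+\varkappa)}+w_0\,z^l\bar z^k+\overline{w_0}\,z^k\bar z^l,\quad w_0\in\mathbb T,
\]
which lie in $\cbb_T[z,\bar z]$ and, since $2(k+\varkappa)=k+l$, equal $2\varrho^{\,k+l}\bigl(1+\rea(w_0\,\E^{\I(l-k)t})\bigr)\Ge 0$ on all of $\cbb$, hence on $Z$. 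Thus $\varLambda(p_{w_0})=2c_{k+\varkappa,k+\varkappa}+w_0c_{l,k}+\overline{w_0}c_{k,l}\Ge 0$ for every $w_0\in\mathbb T$; replacing $w_0$ by $-w_0$ and comparing forces the middle expression to be real for all $w_0$, which yields $c_{l,k}=\overline{c_{k,l}}$, and minimising $2\rea(\overline{w_0}c_{k,l})$ over $w_0\in\mathbb T$ then gives $|c_{k,l}|\Le c_{k+\varkappa,k+\varkappa}$. Note that this half uses neither \eqref{kappaout} nor properness of $Z$.

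For (b)$\Rightarrow$(a) I would take an arbitrary $p\in\cbb_T[z,\bar z]$ nonnegative on $Z$ and exploit the circle-covering: evaluating at the points $\varrho\,\E^{\I t}\in Z$ furnished by \eqref{kappaout} gives
\[
q(x)+x^{k+\varkappa}\bigl(p_{k,l}\,\bar w+p_{l,k}\,w\bigr)\Ge 0,\quad x\in Z_\mathrm r,\ w\in\mathbb T.
\]
Averaging the inequalities for $w$ and $-w$ shows $q(x)\Ge 0$ on $Z_\mathrm r$; subtracting $q(x)$ and using any $x>0$ in $Z_\mathrm r$ (available since $Z$ is determining, hence infinite) shows $p_{k,l}\bar w+p_{l,k}w$ is real for all $w\in\mathbb T$, whence $p_{l,k}=\overline{p_{k,l}}$, and minimising over $w$ yields $q(x)\Ge 2|p_{k,l}|\,x^{k+\varkappa}$ on $Z_\mathrm r$. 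Hence $r(x)\okr q(x)-2|p_{k,l}|\,x^{k+\varkappa}$ is nonnegative on $Z_\mathrm r$, so $L(r)\Ge 0$ by the Riesz--Haviland condition in (b), i.e.\ $L(q)\Ge 2|p_{k,l}|\,L(x^{k+\varkappa})$. Combining $\varLambda(p)=L(q)+2\rea(p_{k,l}c_{k,l})\Ge L(q)-2|p_{k,l}|\,|c_{k,l}|$ with $|c_{k,l}|\Le c_{k+\varkappa,k+\varkappa}=L(x^{k+\varkappa})$ then gives $\varLambda(p)\Ge 2|p_{k,l}|\,L(x^{k+\varkappa})-2|p_{k,l}|\,L(x^{k+\varkappa})=0$, which is (iv)$_Z$.

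The hard part is this last step, specifically the clean passage from ``$p\Ge 0$ on $Z$'' to the three scalar conditions ($q\Ge 0$ on $Z_\mathrm r$, $p_{l,k}=\overline{p_{k,l}}$, and the domination $q(x)\Ge 2|p_{k,l}|\,x^{k+\varkappa}$): this is exactly where the evenness of $l-k$ and the full-circle hypothesis \eqref{kappaout} are indispensable, since otherwise $w$ would not sweep all of $\mathbb T$ and the domination could fail. Finally, for the ``in particular'' assertion I would apply the proven equivalence both to the given $Z$ and to $\cbb$ itself (which trivially satisfies \eqref{kappaout} and is determining): since $\mathscr Z\bigl(\tfrac{2\pi}{l-k}\bigr)\subset Z$ forces $Z_\mathrm r=[0,\infty)=\cbb_\mathrm r$, condition (b) is literally identical for $Z$ and for $\cbb$, so (iv)$_Z$ and (iv) are both equivalent to it and hence to each other, settling question $2^\circ$ affirmatively.
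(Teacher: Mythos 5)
Your proposal is correct and follows essentially the same route as the paper's proof: the forward direction tests (iv)$_Z$ against $2(z\bar z)^{k+\varkappa}+\theta z^l\bar z^k+\bar\theta z^k\bar z^l$ (the paper with $|\theta|\Le1$, you with $\theta\in\mathbb T$, an immaterial difference), and the reverse direction uses exactly the same circle-sweeping consequence of \eqref{kappaout} to reduce to the domination $q(x)\Ge 2|p_{k,l}|x^{k+\varkappa}$ on $Z_{\mathrm r}$ and then invokes the Riesz--Haviland condition together with $|c_{k,l}|\Le c_{k+\varkappa,k+\varkappa}$. The only cosmetic deviation is that you extract $p_{l,k}=\overline{p_{k,l}}$ from the circle argument plus a positive point of $Z_{\mathrm r}$, whereas the paper reads it off directly from the determining property of $Z$; both are sound.
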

   \begin{proof}
(a)$\Rightarrow$(b) The Riesz-Haviland positivity condition for
$\{c_{m,m}\}_{m=0}^\infty$ has been already discussed in the proof
of Lemma \ref{zet er}. Notice that for every $\theta \in \cbb$
such that $|\theta|\Le 1$, the polynomial
   \begin{align*}
2(z\bar z)^{k+\varkappa} + \theta z^l \bar z^k + \bar\theta
z^k \bar z^l = 2(z\bar z)^{k+\varkappa} + 2 \RE (\theta z^l
\bar z^k)
   \end{align*}
is nonnegative on $\cbb$. Hence, by (a), $2c_{k+\varkappa,
k+\varkappa} + \theta c_{l,k} + \bar\theta c_{k,l} \Ge 0$
for all $\theta\in\cbb$ with $|\theta|\Le 1$. Substituting successively
$\theta = 0$, $\theta =1$ and $\theta = \I$, we deduce that $c_{k+\varkappa,
k+\varkappa} \Ge 0$ and $c_{l,k} =
\overline {c_{k,l}}$. In turn, taking $\theta$ such that
$|\theta|=1$ and $\theta c_{l,k} = - |c_{l,k}|$, we obtain
the remaining inequality in (b).

(b)$\Rightarrow$(a) Assume that $p(z,\bar z) =
\sum_{j=0}^N p_j z^j \bar z^j + \theta z^l \bar z^k +
\tilde\theta z^k \bar z^l \Ge 0$ for all $z\in Z$
($p_0,\ldots, p_N, \theta, \tilde\theta \in \cbb$).
Since $Z$ is a determining set for\/ $\cbb[z,\bar z]$,
we see that $\tilde \theta = \bar \theta$ and $p_j \in
\rbb$ for all $j$. As $\varrho\, \E^{\I t} \in Z$ for
all $t \in [0, \frac \pi\varkappa)$ and $\varrho \in
\sqrt{Z_\mathrm r}$, we get
   \begin{align*}
\sum_{j=0}^N p_j \varrho^{2j} + 2 \varrho^{2(k+ \varkappa)}
\RE (\theta \E^{2\I \varkappa t}) = p(\varrho\, \E^{\I t},
\varrho\, \E^{-\I t}) \Ge 0, \quad \varrho \in \sqrt{Z_\mathrm r}, \, t \in \Big[0, \frac \pi\varkappa\Big).
   \end{align*}
Since the numbers $2\varkappa t$, $t \in [0, \frac \pi\varkappa)$,
exhaust the whole interval $[0,2\pi)$, we deduce that
$\sum_{j=0}^N p_j \varrho^{2j} - 2 |\theta| \varrho^{2(k+
\varkappa)} \Ge 0$ for all $\varrho \in \sqrt{Z_\mathrm r}$. By
the Riesz-Haviland positivity condition, we see that $\sum_{j=0}^N
p_j c_{j,j} - 2 |\theta| c_{k+\varkappa, k+\varkappa} \Ge 0$.
Owing to this inequality and (b), we conclude that
   \begin{align*}
-\theta c_{l,k} - \tilde \theta c_{k,l} = - 2 \RE (\theta
c_{l,k}) \Le 2 |\theta| |c_{l,k}| \Le 2 |\theta|
c_{k+\varkappa, k+ \varkappa} \Le \sum_{j=0}^N p_j c_{j,j},
   \end{align*}
which shows that $\{c_{m,n}\}_{(m,n) \in T}$ satisfies
(iv)$_Z$.

The ``in particular'' part of the conclusion follows from
Lemma \ref{niepust} and the equivalence
(a)$\Leftrightarrow$(b) (because $Z_\mathrm r =\cbb_\mathrm r = [0,\infty)$).
   \end{proof}
Note that if $l-k$ is even, then by Theorem A
and the measure transport theorem the condition (b) of
Proposition \ref{lambda^2} is equivalent to the condition
(b) below. The key observation is that the integral
$\int_{[0,\infty)} \varrho^{k+l} \D \nu(\varrho)$ is equal
to $c_{k+\varkappa,k+\varkappa}$.
   \begin{pro} \label{lambda^2new}
Let $T = \mathscr T_{k,l}$ with $l > k$ and let $Z$ be a closed
subset of $\cbb$ satisfying \eqref{kappaout}. Given a
system $\{c_{m,n}\}_{(m,n) \in T} \subset \cbb$, consider
the following two conditions:
   \begin{enumerate}
   \item[(a)] $\{c_{m,n}\}_{(m,n) \in T}$ satisfies {\em (iv)$_Z$},
   \item[(b)] there exists a finite positive Borel measure $\nu$ on
$[0,\infty)$ supported in $\sqrt{Z_\mathrm r}$ such
that $c_{m,m} = \int_{[0,\infty)} \varrho^{2m} \D
\nu(\varrho)$ for all $m\in \zbb_\pluss$, $c_{l,k} =
\overline {c_{k,l}}$ and $|c_{k,l}| \Le
\int_{[0,\infty)} \varrho^{k+l} \D \nu(\varrho)$.
   \end{enumerate}
Then {\em (b)} implies {\em (a)}. If additionally $Z_\mathrm r =
[0,\infty)$ or $\{c_{m,m}\}_{m=0}^\infty$ is a determinate
Stieltjes moment sequence, then {\em (a)} implies {\em
(b)}. In particular, if $\mathscr Z\big( \frac{2\pi}{l-k}
\big) \subset Z \varsubsetneq \cbb$ with $l-k \Ge
2$, then the answer to the question $2^\circ$ is in the
affirmative.
   \end{pro}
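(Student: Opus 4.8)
The plan is to attack the two implications by completely different routes. The implication \textrm{(b)}$\Rightarrow$\textrm{(a)} is elementary and holds for every $l>k$; here the inclusion \eqref{kappaout} does all the work. The implication \textrm{(a)}$\Rightarrow$\textrm{(b)} I would reduce to Theorem \ref{zespmom}: since (iv)$_Z$ trivially entails the unrestricted condition (iv), that theorem hands us a genuine complex representing measure, and the two extra hypotheses serve only to localize it. Throughout I write $\varLambda$ for the linear functional on $\cbb_T[z,\bar z]$ with $\varLambda(z^m\bar z^n)=c_{m,n}$, so that (iv)$_Z$ reads $\varLambda(p)\Ge 0$ for every $p\in\cbb_T[z,\bar z]$ nonnegative on $Z$.

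For \textrm{(b)}$\Rightarrow$\textrm{(a)} I would take such a $p(z,\bar z)=\sum_j p_j z^j\bar z^j+\theta z^l\bar z^k+\tilde\theta z^k\bar z^l$, set $q(x)=\sum_j p_jx^j$, and evaluate at the sector points $\varrho\,\E^{\I t}$ with $\varrho\in\sqrt{Z_{\mathrm r}}$ and $t\in[0,\frac{2\pi}{l-k})$, which lie in $Z$ by \eqref{kappaout}. There $p=q(\varrho^2)+\varrho^{k+l}(\theta w+\tilde\theta\bar w)$ with $w=\E^{\I(l-k)t}$ sweeping out all of $\mathbb T$. Averaging over $w$ gives $q(\varrho^2)\Ge 0$; forcing the expression to stay real for every $w$ yields $\tilde\theta=\bar\theta$; and minimizing $2\RE(\theta w)$ gives $q(\varrho^2)\Ge 2|\theta|\varrho^{k+l}$ on $\sqrt{Z_{\mathrm r}}$. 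Invoking $c_{m,m}=\int\varrho^{2m}\D\nu$, $c_{l,k}=\overline{c_{k,l}}$ and $|c_{k,l}|\Le\int\varrho^{k+l}\D\nu$ from \textrm{(b)}, and that $\nu$ is carried by $\sqrt{Z_{\mathrm r}}$, I would conclude
\begin{align*}
\varLambda(p)=\int q(\varrho^2)\,\D\nu+2\RE(\bar\theta c_{k,l})\Ge\int q(\varrho^2)\,\D\nu-2|\theta|\,|c_{k,l}|\Ge 2|\theta|\int\varrho^{k+l}\,\D\nu-2|\theta|\int\varrho^{k+l}\,\D\nu=0.
\end{align*}

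For \textrm{(a)}$\Rightarrow$\textrm{(b)} I would apply Theorem \ref{zespmom} (its hypotheses hold for $T=\mathscr T_{k,l}$) to obtain a positive Borel measure $\mu$ on $\cbb$ with $c_{m,n}=\int z^m\bar z^n\,\D\mu$ for $(m,n)\in T$. Let $\nu$ be the push-forward of $\mu$ under $z\mapsto|z|$. Then $c_{m,m}=\int\varrho^{2m}\D\nu$, automatically $c_{l,k}=\overline{c_{k,l}}$, and by the triangle inequality $|c_{k,l}|\Le\int|z|^{k+l}\D\mu=\int\varrho^{k+l}\D\nu$; the sole remaining point is that $\nu$ be supported in $\sqrt{Z_{\mathrm r}}$. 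If $Z_{\mathrm r}=[0,\infty)$ this is vacuous. Otherwise $\{c_{m,m}\}_{m=0}^\infty$ is a determinate Stieltjes sequence, so it has a unique representing measure; Lemma \ref{zet er} supplies one carried by $Z_{\mathrm r}$, while the $|z|^2$-distribution of $\mu$ is another, whence they coincide and $\nu$ indeed lives in $\sqrt{Z_{\mathrm r}}$. The ``in particular'' claim then follows because $\mathscr Z(\frac{2\pi}{l-k})\subset Z$ forces $\sqrt{Z_{\mathrm r}}=[0,\infty)$, hence $Z_{\mathrm r}=\cbb_{\mathrm r}$ and \eqref{kappaout}; the equivalence \textrm{(a)}$\Leftrightarrow$\textrm{(b)} then holds for $Z$ and for $\cbb$ with the \emph{same} condition \textrm{(b)}, and since (iv) is exactly (iv)$_\cbb$ this gives (iv)$\Leftrightarrow$(iv)$_Z$ for $Z\varsubsetneq\cbb$.

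The main obstacle is precisely this localization in \textrm{(a)}$\Rightarrow$\textrm{(b)}. For odd $l-k$ the quantity $\int\varrho^{k+l}\D\nu$ is not expressible through the $c_{m,m}$'s, so one cannot phrase \textrm{(b)} purely in terms of the data, and a direct attempt to pin down $|c_{k,l}|$ from (iv)$_Z$ alone would require majorizing $2\varrho^{k+l}$ from above by even polynomials with integrals converging in $L^1(\nu)$ — a genuine square-root-approximation difficulty. Routing through the complex representing measure of Theorem \ref{zespmom} sidesteps it, and it is exactly at the step of identifying the radial push-forward with the unique (or unconstrained) representing measure that the hypothesis ``$Z_{\mathrm r}=[0,\infty)$ or $\{c_{m,m}\}$ determinate'' becomes indispensable.
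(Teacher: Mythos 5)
Your proof is correct, and it splits exactly where the paper's does in one half and diverges in the other. The implication (a)$\Rightarrow$(b) is essentially identical to the paper's argument: pass through Theorem \ref{zespmom} to get a complex representing measure $\mu$, push it forward to $\nu$ on $[0,\infty)$, get $|c_{k,l}|\Le\int\varrho^{k+l}\D\nu$ from the triangle inequality, and localize the support of $\nu$ via Lemma \ref{zet er} plus Stieltjes determinacy (the case $Z_{\mathrm r}=[0,\infty)$ being vacuous); the ``in particular'' part is likewise settled by noting that $Z_{\mathrm r}=\cbb_{\mathrm r}=[0,\infty)$ makes condition (b) the same for $Z$ as for $\cbb$. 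Where you genuinely differ is (b)$\Rightarrow$(a). The paper constructs an explicit positive Borel measure representing all the $c_{m,n}$, $(m,n)\in T$: it writes $c_{k,l}=\theta\int\varrho^{k+l}\D\nu$ with $|\theta|\Le 1$, represents $\bar\theta$ as $\frac12(\E^{\I t_1}+\E^{\I t_2})$, and takes the image of $\nu\otimes\zeta$ under $(\varrho,t)\mapsto\varrho\,\E^{\I t}$, where $\zeta$ is the two-point measure at $t_1/(l-k)$ and $t_2/(l-k)$; the resulting measure is supported in the two rays \eqref{listek}, hence in $Z$ by \eqref{kappaout}, and (iv)$_Z$ follows by integration. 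You instead run the direct Riesz--Haviland-style estimate, exactly in the spirit of the paper's proof of (b)$\Rightarrow$(a) of Proposition \ref{lambda^2}: sweep the sector to get $\tilde\theta=\bar\theta$ and $q(\varrho^2)\Ge 2|\theta|\varrho^{k+l}$ on $\sqrt{Z_{\mathrm r}}$, then integrate against $\nu$. This is sound (modulo the degenerate case $\sqrt{Z_{\mathrm r}}\subset\nul$, where the reality argument for $\tilde\theta=\bar\theta$ fails but the conclusion is trivial because $\int\varrho^{k+l}\D\nu=0$ forces $c_{k,l}=c_{l,k}=0$) and is more elementary. What the paper's route buys in exchange is strictly more information: under (b) the system actually admits a representing measure supported in $Z$ (indeed in a union of two rays), not merely the positivity condition (iv)$_Z$, which is also what underlies the paper's closing remark about representing $\theta$ as a convex combination of unimodular numbers.
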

   \begin{proof}
(a)$\Rightarrow$(b) Since $\{c_{m,n}\}_{(m,n) \in T}$
evidently satisfies (iv), we deduce from Theorem
\ref{zespmom} that there exists a positive Borel measure
$\mu$ on $\cbb$ such that $c_{m,n} = \int_\cbb z^m \bar z^n
\D \mu(z)$ for all $(m,n) \in T$. Clearly, $c_{l,k} =
\overline {c_{k,l}}$. Applying the measure transport
theorem, we see that the finite positive Borel measure
$\nu$ on $[0,\infty)$ defined via $\nu(\sigma) =
\mu(\{z\in \cbb \colon |z| \in \sigma\})$ for Borel subsets
$\sigma$ of $[0,\infty)$ satisfies the first equality in
(b). The inequality in (b) can be justified as follows:
   \begin{align*}
|c_{k,l}| \Le \int_\cbb |z|^{k+l} \D \mu(z) =
\int_{[0,\infty)} \varrho^{k+l} \D \nu (\varrho).
   \end{align*}
Thus the case of $Z_\mathrm r = [0,\infty)$ is settled. If
$Z_\mathrm r \varsubsetneq [0,\infty)$ and
$\{c_{m,m}\}_{m=0}^\infty$ is a determinate Stieltjes
moment sequence, then by Lemma \ref{zet er} and the measure
transport theorem we deduce that the measure $\nu$ is
supported in $\sqrt{Z_\mathrm r}$.

(b)$\Rightarrow$(a) By the inequality in (b), there exists
$\theta \in \cbb$ such that $|\theta|\Le 1$ and $c_{k,l} =
\theta \int_{[0,\infty)} \varrho^{k+l} \D \nu(\varrho)$.
It is easily seen that there exist (not necessarily
distinct) numbers $t_1,t_2 \in [0,2\pi)$ such that
$\bar\theta = \frac 12( \E^{\I t_1} + \E^{\I t_2})$. Let
$\zeta$ be a positive Borel measure on $[0,2\pi)$ supported
in $\big\{\frac{t_1}{j}, \frac{t_2}{j}\big\}$ with
$\zeta\big(\big\{\frac{t_1}{j}\big\}\big) =
\zeta\big(\big\{\frac{t_2}{j}\big\}\big) = \frac 12$, where
$j = l-k$. Define the Borel measure $\mu$ on $\cbb$ via
   \begin{align*}
\mu(\sigma) = \int_{[0,2\pi)} \int_{[0,\infty)}
\chi_\sigma(\varrho \, \E^{\I t}) \D\nu(\varrho)
\D\zeta(t), \quad \sigma \text{ -- Borel subset of } \cbb.
   \end{align*}
It is a matter of routine to verify that $c_{m,n} =
\int_\cbb z^m \bar z^n \D\mu(z)$ for all $(m, n)\in T$
(hint:\ $\bar \theta = \int_{[0,2\pi)} \E^{\I j t}
\D\zeta(t)$). One can show that the closed support of the
measure $\mu$ is contained in the set
   \begin{align}\label{listek}
\{\varrho\, \E^{\I t_1/j} \colon \varrho \in
\sqrt{Z_\mathrm r}\} \cup \{\varrho\, \E^{\I t_2/j} \colon
\varrho \in \sqrt{Z_\mathrm r}\},
   \end{align}
which in view of \eqref{kappaout} is a subset of
$Z$. This implies that the system
$\{c_{m,n}\}_{(m,n)\in T}$ satisfies (iv)$_Z$. Observe
that the construction of the measure $\mu$ is based on
the possibility of representing $\theta$ as an
arithmetic mean of two complex numbers of absolute
value $1$. In fact, the same proof works if $\theta$
is represented as a finite convex combination of
complex numbers of absolute value $1$, in which case
the closed support of $\mu$ consists of a finite
number of sets of the type appearing in
\eqref{listek}.

The ``in particular'' part of the conclusion follows from
the equivalence (a)$\Leftrightarrow$(b) which is valid
because $Z_\mathrm r = \cbb_\mathrm r = [0,\infty)$.
   \end{proof}
The following proposition shows that the angle
$\frac{2\pi}{l-k}$ appearing in the assumption
\eqref{kappaout} of Propositions \ref{lambda^2} and
\ref{lambda^2new} is optimal, i.e.\ it cannot be made
smaller. It is worth pointing out that if $l=k+1$, then the
assumption \eqref{pierw} below is satisfied by any proper
subset $Z$ of $\cbb$.
   \begin{pro}\label{k-l=1}
Let $T$ be a symmetric subset of $\nfr$ such that $\mathscr T_{k,l}
\subset T$ for some integers $l> k \Ge 0$ and let $Z$ be a
closed subset of $\cbb$ for which there exists $\lambda \in
\cbb$ such that
   \begin{align}\label{pierw}
\{z\in \cbb \colon z^{l-k} = \lambda\} \subset \cbb
\setminus Z.
   \end{align}
Then there exists a system $\{c_{m,n}\}_{(m,n)\in T}
\subset \cbb$ which satisfies {\em (iv)}, but not {\em
(iv)$_Z$}. In particular, if $Z \subset \mathscr Z(\alpha)$
with $\alpha \in \big[0,\frac{2\pi}{l-k} \big)$, then the
answer to the question $2^\circ$ is in the negative.
   \end{pro}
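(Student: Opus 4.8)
The plan is to realise the desired system as the moment data of a single point mass placed at one of the forbidden roots, and then to write down one explicit polynomial that is nonnegative on $Z$ but has negative integral against that point mass.

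First I would reduce to the case $\lambda \neq 0$. If $\lambda = 0$ satisfies \eqref{pierw}, then $0 \notin Z$, so (as $Z$ is closed and nonempty) $d \okr \dist(0,Z) > 0$, and every $\lambda'$ with $0 < |\lambda'| < d^{\,l-k}$ again satisfies \eqref{pierw}, since each solution of $z^{\,l-k} = \lambda'$ has modulus $|\lambda'|^{1/(l-k)} < d$ and thus lies off $Z$. So I may assume $\lambda \neq 0$; put $j \okr l-k \Ge 1$, $R \okr |\lambda|^{1/j} > 0$, and fix $w_0$ with $w_0^{\,j} = \lambda$ (hence $|w_0| = R$). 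Taking $c_{m,n} \okr w_0^{\,m}\bar w_0^{\,n}$ for $(m,n)\in T$, i.e.\ the moments of $\mu = \delta_{w_0}$, condition (iv) holds for free: for any admissible $\{p_{m,n}\}$ the polynomial $\sum_{(m,n)\in T} p_{m,n} z^m\bar z^n$ is $\Ge 0$ on $\cbb$, so its value $\sum_{(m,n)\in T} p_{m,n} c_{m,n}$ at $w_0$ is $\Ge 0$ (Theorem \ref{zespmom} is not even needed). Everything then rests on breaking (iv)$_Z$.

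For that, the key observation I would use is that, $Z$ being closed and nonempty (the case $Z=\varnothing$ being trivial) and—by \eqref{pierw}—containing no solution of $z^{\,j} = \lambda$, the defect
\[
\eta^2 \okr \inf_{z\in Z}\big[(|z|^2 - R^2)^2 + |z^{\,j} - \lambda|^2\big]
\]
is strictly positive and finite: a minimizing sequence would be bounded (its moduli tend to $R$) and any cluster point would lie in $Z$ and solve $z^{\,j}=\lambda$, a contradiction. I then propose the separating polynomial
\[
p(z,\bar z) \okr |z|^{2k}\Big[(|z|^2 - R^2)^2 + |z^{\,j} - \lambda|^2 - \tfrac{\eta^2}{4}\Big].
\]
The decisive point is that $p$ lies in $\cbb_{\mathscr T_{k,l}}[z,\bar z]$: expanding $|z^{\,j}-\lambda|^2 = |z|^{2j} - 2\RE(\bar\lambda z^{\,j}) + |\lambda|^2$ and multiplying by $|z|^{2k}$ turns every squared-modulus term into a diagonal monomial $z^m\bar z^m$, while the only off-diagonal contribution is $-2|z|^{2k}\RE(\bar\lambda z^{\,j}) = -\bar\lambda\, z^{\,l}\bar z^{\,k} - \lambda\, z^{\,k}\bar z^{\,l}$, carrying precisely the indices $(l,k)$ and $(k,l)$. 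Since the bracket is $\Ge \eta^2 - \tfrac{\eta^2}{4} > 0$ on $Z$, we have $p \Ge 0$ on $Z$; yet $\sum_{(m,n)\in T} p_{m,n} c_{m,n} = p(w_0) = R^{2k}\big(-\tfrac{\eta^2}{4}\big) < 0$ because $R > 0$. This contradicts (iv)$_Z$, proving the main assertion.

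I expect the genuinely nontrivial step to be exactly this recognition that the Euclidean squared-distance function from the point $(R^2,\lambda)$, which is the natural strict separator of $w_0$ from $Z$, collapses—after the harmless factor $|z|^{2k}$—into the very restricted span of the diagonal monomials together with $z^l\bar z^k$ and $z^k\bar z^l$ that $\mathscr T_{k,l}$ permits; the positivity of $\eta^2$ (i.e.\ genuine separation) is where closedness of $Z$ enters. The ``in particular'' clause is then routine: for a closed $Z \subset \mathscr Z(\alpha)$ with $\alpha < \tfrac{2\pi}{l-k}$ the $l-k$ solutions of $z^{\,l-k}=\lambda$ are spaced by the angle $\tfrac{2\pi}{l-k} > \alpha$, so I would choose $\arg\lambda$ so that the arc $[0,\alpha]$ falls strictly inside one gap between consecutive solutions; then all solutions avoid $\mathscr Z(\alpha) \supset Z$, \eqref{pierw} holds, and the main assertion supplies a system witnessing the negative answer to the question $2^\circ$.
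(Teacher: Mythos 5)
Your proposal is correct and follows essentially the same route as the paper: both rest on the observation that $|z|^{2k}\,|z^{l-k}-\lambda|^2$ expands into monomials indexed by $\mathscr T_{k,l}$ only, subtract a positive constant realizing the (strictly positive, by closedness of $Z$) separation of $Z$ from the root set of $z^{l-k}=\lambda$, and take moments of a measure concentrated at those roots. The only differences are cosmetic — you use a point mass at a single root where the paper allows any nonzero measure compactly supported near the roots, and your extra diagonal term $(|z|^2-R^2)^2$ is harmless but unnecessary, since $\inf_{z\in Z}|z^{l-k}-\lambda|$ is already positive by the same compactness argument.
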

   \begin{proof}
Put $j=l-k$. Without loss of generality we may assume that
$\lambda \neq 0$. Then there exists $\varepsilon >0$ such
that
   \begin{align}\label{deltaj}
\varnothing \neq \varDelta_\varepsilon \okr \{z \in \cbb
\colon |z^j-\lambda| < \varepsilon \} \subset \cbb
\setminus (Z \cup \nul).
   \end{align}
To see this suppose that $\lambda_1, \ldots, \lambda_j$ are
all the complex $j$-roots of $\lambda$. By \eqref{pierw}
and the closedness of $Z$, there exists $\delta>0$ such
that $\min_{n \in \{1,\ldots, j\}} |z - \lambda_n| \Ge
\delta$ for all $z\in Z\cup \nul$. Since $z^j-\lambda =
\prod_{n=1}^j (z - \lambda_n)$ and consequently
   \begin{align*}
\Big( \min_{n \in \{1,\ldots, j\}} |z - \lambda_n| \Big)^j
\Le \prod_{n=1}^{j} |z - \lambda_n| = |z^j - \lambda|,\quad
z\in \cbb,
   \end{align*}
we deduce that $\varDelta_\varepsilon$ is contained in
$\cbb \setminus(Z \cup \nul)$ whenever $\varepsilon \Le
\delta^j$.

Consider a nonzero finite positive Borel measure $\mu$ on
$\cbb$ compactly supported in the open set $\varDelta_\varepsilon$. Set
$c_{m,n} = \int_\cbb z^m \bar z^n \D \mu(z)$ for $(m,n) \in
T$ and
   \begin{multline*}
p(z,\bar z) = |z|^{2k}(|z^j - \lambda|^2 - \varepsilon^2) =
z^l \bar z^l - \bar \lambda z^l \bar z^k - \lambda z^k \bar
z^l + (|\lambda|^2 - \varepsilon^2) z^k \bar z^k, \quad
z\in \cbb.
   \end{multline*}
Plainly, $p \in \cbb_T[z,\bar z]$. By \eqref{deltaj}, we
see that $p(z,\bar z) \Ge 0$ for all $z\in Z$ and $p(z,\bar
z) < 0$ for all $z\in \varDelta_\varepsilon$ (because $0
\notin \varDelta_\varepsilon$). Evidently,
$\{c_{m,n}\}_{(m,n)\in T}$ satisfies (iv), but not
(iv)$_Z$, because $\mu \neq 0$ and
   \begin{align*}
c_{l,l} - \bar \lambda c_{l,k} - \lambda c_{k,l} +
(|\lambda|^2 - \varepsilon^2) c_{k,k} =
\int_{\varDelta_\varepsilon} p(z,\bar z) \D\mu (z) < 0.
   \end{align*}

To prove the ``in particular'' part of the conclusion note
that $\lambda = \E^{\I j \theta}$ satisfies \eqref{pierw}
for any $\theta \in (\alpha, 2\pi/j)$. The proof is
complete.
   \end{proof}
Summing up, the case of sets $\mathscr T_{k,l}$ serves as a good
elucidation of the interplay between $T$ and $Z$ which is
crucial when dealing with the question $2^\circ$. In the
table below we gather information concerning this question;
we keep the assumptions on $T$ and $Z$ made therein.
   \vspace{1ex}
   \begin{center}
   \renewcommand{\arraystretch}{1.3}
   \begin{tabular}{|c||c|c|}
\hline
{\sf Answer} & $T$ & $Z$ \\
\hline\hline \multirow{4}{1.3cm}{\centering\sf NO}
& arbitrary & $Z_\mathrm r \varsubsetneq [0,\infty)$ \\
   \cline{2-3} & $T \supset \mathscr T_{k,l}$, $l-k\Ge 1$
& $Z$ satisfies \eqref{pierw} \\
   \cline{2-3} & $T \supset \mathscr T_{k,l}$, $l-k\Ge 1$ & $Z
\subset \mathscr Z(\alpha)$, $\alpha \in \big[0,
\frac{2\pi}{l-k}\big)$ \\
   \cline{2-3}
& $T \supset \mathscr T_{k,k+1}$, $k\Ge 0$ & arbitrary \\
\hline\hline \multirow{2}{1.3cm}{\centering\sf YES} & $T =
\mathscr T_{k,l}$, $l-k \Ge 2$
& $Z \supset \mathscr Z\big(\frac{2\pi}{l-k} \big)$ \\
   \cline{2-3} & $T = (\nfr_\pluss)_\mathfrak h$ &
   $Z_\mathrm r =
[0,\infty)$\\
\hline
   \end{tabular}
   \end{center}
   \vspace{1ex}
   To justify the `{\sf YES}' part of the
table one should notice that if $T' \subset T$, $Z \subset
Z'$ and the answer to the question $2^\circ$ is in the
affirmative for $T$ and $Z$, then it is so for $T'$ and
$Z'$. In turn, the `{\sf NO}' part requires
contraposition, i.e.\ if
the answer to $2^\circ$ is in the negative for $T'$ and
$Z'$, then it is so for $T$ and $Z$. In view
of these properties and the table above, if $T=\mathscr T_{k,l}$
with $l-k \Ge 1$ and $Z = \mathscr Z(\alpha)$, then the
answer to the question $2^\circ$ is in the negative for
$\alpha \in [0, \frac{2\pi}{l-k})$ and in the affirmative
for $\alpha \in [\frac{2\pi}{l-k}, 2\pi)$.
   \subsection{Subnormality}
Let $S$ be a densely defined linear operator in a complex Hilbert
space $\hh$ with domain $\dz S$. We say that $S$ is {\em subnormal}
if there exist a complex Hilbert space $\kk$ and a normal operator
$N$ in $\kk$ such that $\hh \subset \kk$ (isometric embedding), $\dz
S \subset \dz N$ and $Sh = Nh$ for all $h \in \dz S$. For
fundamentals of the theory of unbounded subnormal operators we refer
the reader to \cite{stoszaf0,stoszaf1,stoszaf2}.

   The following characterization of subnormality
simplifies substantially that of \cite[Theorem 3]{stoszaf1}
(one double sum turns out to be redundant). As in
\cite{stoszaf1}, it is intrinsic in a sense that no
extension is involved. Theorem \ref{chsub} is also related
to part (iv) of \cite[Theorem 37]{st-sz1} which in the case
of $\mathscr F = \mathscr D$ is equivalent to condition
(iii) below.
   \begin{thm} \label{chsub}
Let $S$ be a densely defined linear operator in a complex
Hilbert space $\hh$ such that $S(\dz S) \subset \dz S$.
If $Z \subset \cbb_*$ is a determining set for $\cbb[z,\bar
z]$, then the following conditions are equivalent{\em :}
   \begin{enumerate}
   \item[(i)] $S$ is subnormal,
   \item[(ii)] for every system $\{a_{p,q}^{i,j}\}_{p,q = 0,
\ldots, n}^{i,j=1, \ldots, m} \subset \cbb$, if
   \begin{align}\label{ogol}
\sum_{i,j=1}^m \sum_{p,q=0}^n a_{p,q}^{i,j} \lambda^p
\bar \lambda^q z_i \bar z_j \Ge 0, \quad \lambda, z_1,
\ldots, z_m \in \cbb,
   \end{align}
then
   \begin{align} \label{opineq}
\sum_{i,j=1}^m \sum_{p,q=0}^n a_{p,q}^{i,j} \is{S^p f_i} {S^q f_j}
\Ge 0, \quad f_1, \ldots, f_m \in \dz S,
   \end{align}
   \item[(iii)] for every system $\{a_{p,q}^{i,j}\}_{p,q = 0,
\ldots, n}^{i,j=1, \ldots, m} \subset \cbb$, if there is a finite
matrix $[q_{i,l}]_{i=1}^m{}_{l=1}^k$ with entries in
$\cbb_{\nfr_+}(z,\bar z)$ such that
   \begin{align}\label{spe}
\sum_{p,q=0}^n a_{p,q}^{i,j} \lambda^p \bar \lambda^q = \sum_{l=1}^k
q_{i,l}(\lambda, \bar \lambda) \overline{q_{j,l}(\lambda, \bar
\lambda)}, \quad \lambda \in Z, \, i,j = 1, \ldots, m,
   \end{align}
then \eqref{opineq} holds.
   \end{enumerate}
   \end{thm}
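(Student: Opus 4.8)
The plan is to recast subnormality as positive definite extendibility over the $*$-semigroup $\nfr_\pluss$ and then to read off the two positivity conditions directly from Theorem \ref{main2}. First I would set $\dcal = \dz S$ and, using the hypothesis $S(\dz S)\subset \dz S$, define $\varPhi\colon \nfr \to \scal(\dcal)$ by
   \begin{equation*}
\varPhi(m,n)(f,g) = \is{S^m f}{S^n g}, \quad (m,n)\in\nfr,\ f,g\in\dz S,
   \end{equation*}
viewing $\nfr$ as the symmetric subset $T=\{(m,n)\colon m,n\Ge 0\}$ of $\sfr=\nfr_\pluss$. Since $(\nfr_\pluss)_{\mathfrak h}=\{(m,m)\colon m\Ge 0\}\subset T$ and $T=T^*$, condition \eqref{sym_diag} holds. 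As $Z\subset\cbb_*$ is determining for $\cbb[z,\bar z]$, Lemmata \ref{determ} and \ref{fdesr} show that $Y\okr Y_Z$ is a determining subset of $\xfr_{\nfr_+}$; moreover, under the identification \eqref{crucial}, $\pcal_T(\xfr_{\nfr_+})$ is the ring $\cbb[z,\bar z]$ and $\pcal(\xfr_{\nfr_+})=\cbb_{\nfr_+}(z,\bar z)$ (and similarly over $Y_Z$, after restriction to $Z$).

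The operator-theoretic core, which I expect to be the main obstacle, is the equivalence of (i) with the assertion that $\varPhi$ extends to a positive definite $\scal(\dcal)$-valued map on $\nfr_\pluss$. This rests on the characterization of (unbounded) subnormality by a complex moment representation, namely that $S$ is subnormal if and only if there is a semispectral measure $F$ on the Borel subsets of $\cbb$ with $\is{S^m f}{S^n g}=\int_\cbb z^m\bar z^n\,\D F_{f,g}$ for all $m,n\Ge 0$ and $f,g\in\dz S$ (cf.\ \cite[Theorem 3]{stoszaf1} and \cite[Theorem 37]{st-sz1}); here the domain invariance $S(\dz S)\subset\dz S$ guarantees $S^k f=N^k f$ for a normal extension $N$, so the ``only if'' direction follows by compressing the spectral measure of $N$ and the ``if'' direction by the attendant dilation. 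Granting that $\nfr_\pluss$ is operator semiperfect, the measure-transport map $\xfr_{\nfr_+}\ni(z,w)\mapsto z\in\cbb$ (sending the circle part $\nul\times\mathbb T$ to $0$), together with \eqref{crucial}, turns a semispectral measure on $\xfr_{\nfr_+}$ representing $\varPhi$ into such an $F$ on $\cbb$, and back; hence, by Theorem \ref{main2}\,(i)$\Leftrightarrow$(ii), subnormality of $S$ is equivalent to extendibility of $\varPhi$ to a positive definite map on $\nfr_\pluss$.

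It then remains to match the abstract conditions of Theorem \ref{main2} with (ii) and (iii). For (ii): a matrix $w=[w_{i,j}]_{i,j=1}^m\in{\mathrm M}^m(\pcal_T(\xfr_{\nfr_+}))$ is, via $w_{i,j}=\sum_{p,q}a_{p,q}^{i,j}\widehat{(p,q)}$, nothing but the matrix polynomial occurring in \eqref{ogol}; its membership in ${\mathrm M}_\pluss^m(\pcal_T(\xfr_{\nfr_+}))$ amounts to pointwise nonnegativity on $\varOmega\cup(\nul\times\mathbb T)$, which by \eqref{crucial} is exactly \eqref{ogol} for $\lambda\in\cbb_*$ together with its value at $\lambda=0$, i.e.\ \eqref{ogol} for all $\lambda\in\cbb$. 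Since $\varLambda_\varPhi(w_{i,j})(f_i,f_j)=\sum_{p,q}a_{p,q}^{i,j}\is{S^p f_i}{S^q f_j}$, the requirement $[\varLambda_\varPhi(w_{i,j})]_{i,j=1}^m\gg 0$ is precisely \eqref{opineq}; thus (ii) says that $\varLambda_\varPhi$ is completely positive. For (iii): by Lemma \ref{factor} the members of ${\mathrm M}_{\mathrm f}^m(\pcal_T(Y_Z))$ are exactly the matrices $\big[\sum_{l} q_{i,l}\bar q_{j,l}\big]_{i,j=1}^m\big|_{Y_Z}$ with $q_{i,l}\in\pcal(Y_Z)=\cbb_{\nfr_+}(z,\bar z)|_Z$, so condition \eqref{spe} says precisely that the matrix polynomial of \eqref{ogol}, restricted to $Y_Z$, lies in ${\mathrm M}_{\mathrm f}^m(\pcal_T(Y_Z))$; hence (iii) says that $\varLambda_{\varPhi,Y_Z}$ is completely f-positive.

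Finally, I would invoke Theorem \ref{main2} for $\sfr=\nfr_\pluss$, $T=\nfr$ and $Y=Y_Z$: its equivalence (i)$\Leftrightarrow$(iii)$\Leftrightarrow$(iv) shows that extendibility of $\varPhi$ to a positive definite map, complete positivity of $\varLambda_\varPhi$, and complete f-positivity of $\varLambda_{\varPhi,Y_Z}$ are mutually equivalent. Combined with the subnormality equivalence of the second paragraph and the translations of the third, this yields (i)$\Leftrightarrow$(ii)$\Leftrightarrow$(iii). The delicate points to watch are the summability and domain issues hidden in the moment representation for the unbounded $S$, and the verification that $\nfr_\pluss$ is operator semiperfect, both of which feed into the passage between $\xfr_{\nfr_+}$ and $\cbb$.
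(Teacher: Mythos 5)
Your proposal is correct and follows essentially the same route as the paper: the same choice of $\sfr=\nfr_\pluss$, $T=\zbb_\pluss\times\zbb_\pluss$, $Y=Y_Z$ and $\varPhi(m,n)(f,g)=\is{S^mf}{S^ng}$, the same translation of (ii) and (iii) into complete positivity and complete f-positivity via \eqref{crucial} and Lemma \ref{factor}, and the same appeal to Theorem \ref{main2} together with \cite[Theorem 37]{st-sz1} (resp.\ \cite[Theorem 3]{stoszaf1}) for the operator-theoretic equivalence with subnormality. The only difference is organizational: the paper runs the cycle (i)$\Rightarrow$(ii)$\Rightarrow$(iii)$\Rightarrow$(i), proving (i)$\Rightarrow$(ii) directly and (ii)$\Rightarrow$(iii) by the trivial observation that sums of squares are pointwise nonnegative, whereas you route all three conditions through Theorem \ref{main2}; both work.
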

   \begin{proof}
    The proof of implication (i)$\Rightarrow$(ii) proceeds along the
same lines as the proof of the ``only if'' part of \cite[Theorem
3]{stoszaf1}. The other possibility is to argue as in the proof of
Lemma \ref{wn1}\,(i).

    (ii)$\Rightarrow$(iii) First note that if \eqref{spe}
holds, then, by the determining property of $Z$ (cf.\ Lemma
\ref{determ}), the equality in \eqref{spe} is valid for all
$\lambda \in \cbb_*$. It is then easily seen that
\eqref{ogol} holds, which by (ii) yields \eqref{opineq}.

    (iii)$\Rightarrow$(i) Consider the $*$-semigroup
$\sfr=\nfr_\pluss$ (which is operator semiperfect due to Remark 7 and Proposition 23 in \cite{st-sz1}), the linear space $\dcal = \dz S$ and the sets $T=\zbb_\pluss \times \zbb_\pluss$ and $Y=Y_Z$ (cf.\ \eqref{yz}). Define the mapping $\varPhi\colon T \to
\scal(\dcal)$ by
    \begin{align*}
\varPhi(m,n) (f,g) = \is{S^mf}{S^n g}, \quad f,g \in \dcal,\, (m,n)
\in T,
    \end{align*}
    and attach to it the linear mapping $\varLambda_{\varPhi,Y} \colon \pcal_T(Y)
\to \scal(\dcal)$ via formula \eqref{1maja}. Then a simple
calculation based on Lemma \ref{factor} shows that (iii) is
equivalent to the complete f-positivity of $\varLambda_{\varPhi,Y}$.
Applying implication (iv)$\Rightarrow$(i) of Theorem \ref{main2} and
implication (iii)$\Rightarrow$(i) of \cite[Theorem 37]{st-sz1} with
$\mathscr F = \mathscr D$ completes the proof.
   \end{proof}
   \subsection{\label{sekl}Unitary dilation of several contractions}
In what follows $\varkappa$ stands for a positive integer.
Set $\zbb_\minuss=-\zbb_\pluss$. Denote by
$\zbb^\varkappa$, $\zbb_\pluss^\varkappa$,
$\zbb_\minuss^\varkappa$ and $\mathbb T^\varkappa$ the
cartesian product of $\varkappa$ copies of $\zbb$,
$\zbb_\pluss$, $\zbb_\minuss$ and $\mathbb T$,
respectively. For simplicity, we write $0$ for the zero
element of the group $\zbb^\varkappa$. Observe that
$\zbb_\pluss^\varkappa \cup \zbb_\minuss^\varkappa =
\zbb^\varkappa$ only for $\varkappa=1$. Let $\pcal(\mathbb
T^\varkappa)$ stand for the linear space of all functions
$p \colon \mathbb T^\varkappa \to \cbb$ of the form
    \begin{align} \label{for1}
p(z) = \sum_{\alpha \in \zbb^\varkappa} a_\alpha z^\alpha, \quad z
\in \mathbb T^\varkappa,
    \end{align}
where $\{a_\alpha\}_{\alpha \in \zbb^\varkappa}$ is a
finite system of complex numbers, and $z^\alpha =
z_1^{\alpha_1} \cdots z_\varkappa^{\alpha_\varkappa}$ for
$z = (z_1, \dots, z_\varkappa) \in \mathbb T^\varkappa$ and
$\alpha=(\alpha_1, \ldots, \alpha_\varkappa)
\in\zbb^\varkappa$. The members of $\pcal(\mathbb
T^\varkappa)$ are called {\em trigonometric polynomials} in
$\varkappa$ variables. A trigonometric polynomial $p$ vanishes on $\mathbb T^\varkappa$ if and only if all its coefficients $a_\alpha$ vanish. Given $T \subset \zbb^\varkappa$, we
denote by $\pcal_T(\mathbb T^\varkappa)$ the linear space
of all trigonometric polynomials $p \in \pcal(\mathbb
T^\varkappa)$ of the form \eqref{for1}, where $a_\alpha =
0$ for all $\alpha \in \zbb^\varkappa \setminus T$. We
abbreviate $\pcal_{\zbb_+^\varkappa}(\mathbb T^\varkappa)$
to $\pcal_\pluss(\mathbb T^\varkappa)$ and
$\pcal_{\zbb_+^\varkappa \cup \zbb_-^\varkappa}(\mathbb
T^\varkappa)$ to $\pcal_\pm(\mathbb T^\varkappa)$. One can
think of members of $\pcal_\pluss(\mathbb T^\varkappa)$ as
{\em analytic} trigonometric polynomials. A nonempty subset
$Y$ of $\mathbb T^\varkappa$ is said to be {\em
determining} for $\pcal(\mathbb T^\varkappa)$
(respectively:\ $\pcal_\pluss(\mathbb T^\varkappa)$) if
each trigonometric polynomial $p \in \pcal(\mathbb
T^\varkappa)$ (respectively:\ $p \in \pcal_\pluss(\mathbb
T^\varkappa)$) vanishing on $Y$ vanishes on the whole set
$\mathbb T^\varkappa$. Note that any infinite subset of
$\mathbb T$ is determining for $\pcal(\mathbb T)$.
    \begin{lem} \label{ppl}
A subset $Y$ of $\mathbb T^\varkappa$ is determining for
$\pcal(\mathbb T^\varkappa)$ if and only if it is determining for
$\pcal_\pluss(\mathbb T^\varkappa)$.
    \end{lem}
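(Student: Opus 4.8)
The plan is to prove the two implications separately, exploiting the fact that on the torus every monomial $z^\beta$ with $\beta \in \zbb_\pluss^\varkappa$ is a nonvanishing factor. The forward implication is essentially trivial, while the reverse implication is where the monomial trick does the work.

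First I would dispose of the easy direction. Since $\pcal_\pluss(\mathbb T^\varkappa) \subset \pcal(\mathbb T^\varkappa)$, any $p \in \pcal_\pluss(\mathbb T^\varkappa)$ vanishing on $Y$ is a fortiori a member of $\pcal(\mathbb T^\varkappa)$ vanishing on $Y$; hence, if $Y$ is determining for $\pcal(\mathbb T^\varkappa)$, such a $p$ vanishes identically on $\mathbb T^\varkappa$. Thus $Y$ is determining for $\pcal_\pluss(\mathbb T^\varkappa)$, and no further argument is needed here.

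For the reverse implication, suppose $Y$ is determining for $\pcal_\pluss(\mathbb T^\varkappa)$ and take an arbitrary $p \in \pcal(\mathbb T^\varkappa)$, say $p(z) = \sum_{\alpha} a_\alpha z^\alpha$ with finitely many nonzero coefficients, such that $p|_Y = 0$. The key step is to shift $p$ into the analytic class by multiplying by a high-enough monomial. Since the support of $p$ is finite, I can choose an integer $N \Ge 0$ large enough that $\alpha_i + N \Ge 0$ for every coordinate $i$ and every $\alpha$ in the support; putting $\beta = (N, \ldots, N) \in \zbb_\pluss^\varkappa$, the polynomial $z^\beta p(z) = \sum_\alpha a_\alpha z^{\alpha + \beta}$ has all exponents in $\zbb_\pluss^\varkappa$, so $z^\beta p \in \pcal_\pluss(\mathbb T^\varkappa)$.

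Now I would exploit that $z^\beta$ is a unit on $\mathbb T^\varkappa$. Because each coordinate satisfies $|z_i| = 1$, the factor $z^\beta$ is nonzero at every point of $\mathbb T^\varkappa$; consequently $z^\beta p$ vanishes on $Y$ precisely where $p$ does, so $(z^\beta p)|_Y = 0$. As $Y$ is determining for $\pcal_\pluss(\mathbb T^\varkappa)$ and $z^\beta p$ lies in this class, it follows that $z^\beta p$ vanishes on all of $\mathbb T^\varkappa$. Dividing by the nonvanishing $z^\beta$ gives $p \equiv 0$ on $\mathbb T^\varkappa$, which shows that $Y$ is determining for $\pcal(\mathbb T^\varkappa)$ and completes the proof. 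I do not anticipate any genuine obstacle: the only thing to watch is that the monomial used to pass from $\pcal$ to $\pcal_\pluss$ must be a unit on $\mathbb T^\varkappa$, which is exactly what confines the shift to positive exponents and keeps the zero set on the torus unchanged.
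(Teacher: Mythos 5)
Your proposal is correct and follows essentially the same route as the paper: the easy direction is the inclusion $\pcal_\pluss(\mathbb T^\varkappa) \subset \pcal(\mathbb T^\varkappa)$, and the reverse direction multiplies $p$ by a monomial $z_1^N\cdots z_\varkappa^N$, which is analytic-shifting and nonvanishing on the torus, exactly as in the paper's one-line argument. No gaps.
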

    \begin{proof}
This is clear, because for every $p \in \pcal(\mathbb
T^\varkappa)$, there exists $n \in \zbb_\pluss$ such that
the trigonometric polynomial $z_1^n\dots z_\varkappa^n \,
p(z_1, \dots, z_\varkappa)$ is analytic.
    \end{proof}
Let $\boldsymbol A = (A_1, \ldots, A_\varkappa)$ be a
$\varkappa$-tuple of bounded linear operators on a complex Hilbert
space $\hh$. Define the family $\{\boldsymbol A^{[\alpha]}\colon
\alpha \in \zbb_\pluss^\varkappa \cup \zbb_\minuss^\varkappa\}$ by
    \begin{align*}
\boldsymbol A^{[\alpha]} =
   \begin{cases} A_1^{\alpha_1} \cdots
A_\varkappa^{\alpha_\varkappa}, & \alpha \in \zbb_\pluss^\varkappa,
   \\[1ex]
A_1^{*|\alpha_1|} \cdots A_\varkappa^{*|\alpha_\varkappa|}, & \alpha
\in \zbb_\minuss^\varkappa.
   \end{cases}
   \end{align*}
For $\alpha \in \zbb_\pluss^\varkappa$ we replace $\boldsymbol
A^{[\alpha]}$ by the standard multi-index notation $\boldsymbol
A^\alpha$. Following \cite[page 32]{nag3}, we say that a
$\varkappa$-tuple $\boldsymbol A$ has a {\it unitary power dilation}
if there exists a complex Hilbert space $\kk \supset \hh$ (isometric
embedding) and a $\varkappa$-tuple $\boldsymbol U = (U_1, \ldots,
U_\varkappa)$ of commuting unitary operators on $\kk$ such that
    \begin{align*} 
\boldsymbol A^\alpha = P \boldsymbol U^\alpha \bigr|_{\hh}, \quad
\alpha \in \zbb_\pluss^\varkappa,
    \end{align*}
   where $P$ stands for the orthogonal projection of $\kk$ onto
$\hh$. Such $\boldsymbol U$ is called a {\em unitary power dilation}
of $\boldsymbol A$. The proof of the following fact is left to the
reader.
    \begin{lem} \label{*dyl}
If $\boldsymbol U$ is a unitary power dilation of $\boldsymbol A$,
then the operators $A_1, \ldots, A_\varkappa$ commute if and only if
$\boldsymbol A^{[\alpha]} = P \boldsymbol U^\alpha \bigr|_{\hh}$ for
all $\alpha \in \zbb_\pluss^\varkappa \cup \zbb_\minuss^\varkappa$.
    \end{lem}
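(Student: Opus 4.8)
The plan is to begin by stripping away the part of the statement that carries no information. For $\alpha \in \zbb_\pluss^\varkappa$ we have $\boldsymbol A^{[\alpha]} = \boldsymbol A^\alpha$, and the equality $\boldsymbol A^{[\alpha]} = P\boldsymbol U^\alpha|_{\hh}$ is then exactly the defining property of a unitary power dilation, hence holds unconditionally. Consequently both the hypothesis of the ``if'' implication and the conclusion of the ``only if'' implication reduce to the case $\alpha \in \zbb_\minuss^\varkappa$, and the whole lemma becomes an equivalence between a family of operator identities indexed by $\zbb_\minuss^\varkappa$ and the pairwise commutativity of $A_1, \ldots, A_\varkappa$.

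Next I would translate the negative-index conditions into a reordering statement. Fix $\beta \in \zbb_\pluss^\varkappa$ and put $\alpha = -\beta \in \zbb_\minuss^\varkappa$. Since $U_1, \ldots, U_\varkappa$ are commuting unitaries and $P$ is the orthogonal projection of $\kk$ onto $\hh$, a short inner-product computation gives $(P\boldsymbol U^\beta|_{\hh})^* = P(\boldsymbol U^\beta)^*|_{\hh} = P\boldsymbol U^\alpha|_{\hh}$. Combined with $P\boldsymbol U^\beta|_{\hh} = \boldsymbol A^\beta$ this yields $P\boldsymbol U^\alpha|_{\hh} = (\boldsymbol A^\beta)^* = A_\varkappa^{*\beta_\varkappa}\cdots A_1^{*\beta_1}$, whereas by definition $\boldsymbol A^{[\alpha]} = A_1^{*\beta_1}\cdots A_\varkappa^{*\beta_\varkappa}$. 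Hence $\boldsymbol A^{[\alpha]} = P\boldsymbol U^\alpha|_{\hh}$ for all $\alpha \in \zbb_\minuss^\varkappa$ is equivalent to $A_1^{*\beta_1}\cdots A_\varkappa^{*\beta_\varkappa} = A_\varkappa^{*\beta_\varkappa}\cdots A_1^{*\beta_1}$ for every $\beta \in \zbb_\pluss^\varkappa$, and, taking adjoints, to the reordering identity $A_1^{\beta_1}\cdots A_\varkappa^{\beta_\varkappa} = A_\varkappa^{\beta_\varkappa}\cdots A_1^{\beta_1}$, $\beta \in \zbb_\pluss^\varkappa$.

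Finally I would close the argument by showing that this last family of identities holds if and only if the $A_i$ commute. If they commute, each identity is a trivial reordering of a product of commuting factors. Conversely, specializing $\beta$ to the multi-index with entry $1$ in two positions $i<j$ and $0$ elsewhere returns exactly $A_i A_j = A_j A_i$; letting the pair $(i,j)$ vary gives full pairwise commutativity.

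The only step requiring genuine care, and the one place where both the unitarity and the commutativity of $\boldsymbol U$ are actually used, is the adjoint identity $(P\boldsymbol U^\beta|_{\hh})^* = P\boldsymbol U^{-\beta}|_{\hh}$; the rest is bookkeeping. I expect no real obstacle beyond tracking the order reversal under adjunction and confirming that testing the reordering identity on the two-index multi-indices suffices to recover commutativity of every pair $A_i, A_j$.
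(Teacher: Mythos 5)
Your argument is correct: the positive-cone identities hold by the very definition of a unitary power dilation, the compression identity $(P\boldsymbol U^\beta|_{\hh})^* = P\boldsymbol U^{-\beta}|_{\hh}$ (using unitarity and commutativity of $\boldsymbol U$) converts the negative-cone identities into the reordering identity $A_1^{\beta_1}\cdots A_\varkappa^{\beta_\varkappa} = A_\varkappa^{\beta_\varkappa}\cdots A_1^{\beta_1}$, and testing on two-entry multi-indices recovers pairwise commutativity. The paper explicitly leaves the proof of Lemma \ref{*dyl} to the reader, so there is nothing to compare against, but your route is the natural one the authors evidently had in mind.
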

    We are now in a position to formulate necessary and
sufficient conditions for a $\varkappa$-tuple of bounded
operators to have a unitary power dilation. Theorem
\ref{nagy} below is related to characterizations of
families of operators having unitary power dilations given
in \cite[Corollary 6]{st-sz2} (see also \cite[Lemma
1]{j-s-sz} for another formulation which does not appeal to
the boundedness of $\boldsymbol A$; in fact, one can easily
write a version of Theorem \ref{nagy} for operators which
are not a priori assumed to be bounded).
Different approaches to the problem of the existence of
unitary power dilation have recently appeared in \cite{fhv} and \cite{ar}; however, the solutions proposed therein are not written in terms of operators in question.
    \begin{thm} \label{nagy}
If $\boldsymbol A = (A_1, \ldots, A_\varkappa)$ is a
$\varkappa$-tuple of bounded linear operators on a complex Hilbert
space $\hh$ and $Y$ is a determining set for $\pcal_\pluss(\mathbb
T^\varkappa)$, then the following conditions are equivalent{\em :}
    \begin{enumerate}
    \item[(i)] $\boldsymbol A$ has a unitary power dilation and
the operators $A_1, \ldots, A_\varkappa$ commute,
    \item[(ii)] for every finite system $\{a_\alpha^{i,j}\colon
i,j=1, \ldots, m, \alpha \in \zbb_\pluss^\varkappa \cup
\zbb_\minuss^\varkappa\} \subset \cbb$, if
   \begin{align} \label{drit}
\sum_{i,j=1}^m \sum_{\alpha \in \zbb_+^\varkappa \cup
\zbb_-^\varkappa} a_\alpha^{i,j} \lambda^\alpha z_i \bar z_j \Ge 0,
\quad \lambda \in \mathbb T^\varkappa, \, z_1, \ldots, z_m \in \cbb,
   \end{align}
then
   \begin{align} \label{nagye}
\sum_{i,j=1}^m \sum_{\alpha \in \zbb_+^\varkappa \cup
\zbb_-^\varkappa} a_\alpha^{i,j} \is{\boldsymbol A^{[\alpha]} f_i}
{f_j} \Ge 0, \quad f_1, \ldots, f_m \in \hh,
   \end{align}
    \item[(iii)] for every finite system $\{a_\alpha^{i,j}\colon
i,j=1, \ldots, m, \alpha \in \zbb_\pluss^\varkappa \cup
\zbb_\minuss^\varkappa\} \subset \cbb$, if there is a finite matrix
$[q_{i,l}]_{i=1}^m{}_{l=1}^k$ with entries in $\pcal_\pluss(\mathbb
T^\varkappa)$ such that
   \begin{align} \label{spe+}
\sum_{\alpha \in \zbb_+^\varkappa \cup \zbb_-^\varkappa}
a_\alpha^{i,j} \lambda^\alpha = \sum_{l=1}^k q_{i,l}(\lambda)
\overline{q_{j,l}(\lambda)}, \quad \lambda \in Y, \, i,j = 1,
\ldots, m,
   \end{align}
then \eqref{nagye} holds.
    \end{enumerate}
    \end{thm}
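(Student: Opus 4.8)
The plan is to derive Theorem~\ref{nagy} as an application of Theorem~\ref{main2} to the group $\sfr = \zbb^\varkappa$, viewed as a unital commutative $*$-semigroup under addition with involution $\alpha^* = -\alpha$. First I would record the structure needed to invoke Theorem~\ref{main2}. The Hermitian elements of $\sfr$ form $\{0\}$, and $T \okr \zbb_\pluss^\varkappa \cup \zbb_\minuss^\varkappa$ satisfies $T^* = -T = T$ and contains $\sfr_{\mathfrak h}=\{0\}$, so $T$ obeys \eqref{sym_diag}. A character $\chi$ of $\zbb^\varkappa$ is determined by $\lambda_i \okr \chi(e_i)$, and \eqref{chi1}--\eqref{chi3} force $\lambda_i \in \mathbb T$ and $\hat\alpha(\chi) = \lambda^\alpha$; hence $\xfr_\sfr$ is identified with $\mathbb T^\varkappa$, with $\pcal(\xfr_\sfr) = \pcal(\mathbb T^\varkappa)$ and $\pcal_T(\xfr_\sfr) = \pcal_\pm(\mathbb T^\varkappa)$. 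Since $Y$ is determining for $\pcal_\pluss(\mathbb T^\varkappa)$, Lemma~\ref{ppl} makes it determining for $\pcal(\mathbb T^\varkappa)$, thus for $\xfr_\sfr$. Finally, being an abelian group, $\sfr$ is operator semiperfect: every positive definite $\scal(\dcal)$-valued function on it dilates, by the Sz.-Nagy dilation theorem, to a unitary representation whose joint spectral measure compresses to a semispectral measure on $\mathbb T^\varkappa$, yielding \eqref{sigma}.

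Next I would set $\varPhi\colon T \to \scal(\hh)$, $\varPhi(\alpha)(f,g) = \is{\boldsymbol A^{[\alpha]} f}{g}$, and translate the four conditions of Theorem~\ref{main2} into those of Theorem~\ref{nagy}. For a matrix $[w_{i,j}]_{i,j=1}^m$ with $w_{i,j} = \sum_{\alpha} a_\alpha^{i,j}\lambda^\alpha \in \pcal_T(\mathbb T^\varkappa)$, membership in $\mathrm M_\pluss^m(\pcal_T(\xfr_\sfr))$ is exactly the hypothesis \eqref{drit}, while $[\varLambda_\varPhi(w_{i,j})]_{i,j} \gg 0$ is exactly the conclusion \eqref{nagye}; so complete positivity of $\varLambda_\varPhi$ (condition (iii) of Theorem~\ref{main2}) coincides with condition (ii) of Theorem~\ref{nagy}. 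Likewise, by Lemma~\ref{factor} a matrix lies in $\mathrm M_{\mathrm f}^m(\pcal_T(Y))$ precisely when its entries have the form $w_{i,j} = \sum_{l} q_{i,l}\overline{q_{j,l}}$ on $Y$ with $q_{i,l} \in \pcal(Y)$, which is the sum-square hypothesis \eqref{spe+}; so complete f-positivity of $\varLambda_{\varPhi,Y}$ (condition (iv) of Theorem~\ref{main2}) matches condition (iii) of Theorem~\ref{nagy}.

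The one point needing care is that condition (iii) of Theorem~\ref{nagy} admits only \emph{analytic} factors $q_{i,l} \in \pcal_\pluss(\mathbb T^\varkappa)$, whereas the f-positivity cones are built from arbitrary $q_{i,l} \in \pcal(\mathbb T^\varkappa)$. I would argue that the two families represent the same matrices: multiplying every $q_{i,l}$ by a common monomial $\lambda^{(n,\dots,n)}$ leaves $\sum_{l} q_{i,l}\overline{q_{j,l}}$ unchanged on $\mathbb T^\varkappa$, since each such monomial is unimodular there, and for $n$ large enough all shifted factors are analytic. This is the same device underlying Lemma~\ref{ppl}, and it shows the restriction to $\pcal_\pluss(\mathbb T^\varkappa)$ in \eqref{spe+} costs nothing.

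It then remains to identify condition (i) of Theorem~\ref{main2} with condition (i) of Theorem~\ref{nagy}. If $\boldsymbol A$ has a commuting unitary power dilation $\boldsymbol U$ on $\kk \supset \hh$, then $\varPsi(\alpha)(f,g) \okr \is{\boldsymbol U^\alpha f}{g}$ is positive definite on $\zbb^\varkappa$ and restricts to $\varPhi$ on $T$, since Lemma~\ref{*dyl} gives $\boldsymbol A^{[\alpha]} = P\boldsymbol U^\alpha|_\hh$ for $\alpha \in T$; conversely, any positive definite extension $\varPsi$ of $\varPhi$ dilates by Sz.-Nagy to a unitary representation $\alpha \mapsto \boldsymbol U^\alpha$ with $\varPsi(\alpha) = P\boldsymbol U^\alpha|_\hh$, and $U_i \okr \boldsymbol U^{e_i}$ are commuting unitaries for which $\boldsymbol A^\alpha = P\boldsymbol U^\alpha|_\hh$ on $\zbb_\pluss^\varkappa$ and $\boldsymbol A^{[\alpha]} = P\boldsymbol U^\alpha|_\hh$ on all of $T$, so Lemma~\ref{*dyl} returns commutativity of $A_1,\dots,A_\varkappa$. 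With these identifications the equivalences of Theorem~\ref{main2} deliver Theorem~\ref{nagy}. The only genuine obstacle I anticipate is the analytic-versus-general factorization bookkeeping of the third paragraph; the dilation correspondence and the two matrix translations are routine given the machinery already assembled.
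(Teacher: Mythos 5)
Your proposal is correct and follows essentially the same route as the paper: both reduce Theorem \ref{nagy} to Theorem \ref{main2} for $\sfr=\zbb^\varkappa$ with $T=\zbb_\pluss^\varkappa\cup\zbb_\minuss^\varkappa$, identify $\xfr_{\zbb^\varkappa}$ with $\mathbb T^\varkappa$, invoke Lemma \ref{ppl} and the common-monomial trick to pass between analytic and general factorizations, and match conditions (ii) and (iii) with complete positivity and complete f-positivity via Lemma \ref{factor}. The only difference is bookkeeping: where you re-derive operator semiperfectness of $\zbb^\varkappa$ and the dilation correspondence for condition (i) from Sz.-Nagy's theorem and Lemma \ref{*dyl}, the paper simply cites Maserick for the former and \cite[Theorem 3]{st-sz2} for the latter.
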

    \begin{proof}
Without loss of generality we can assume that $Y$ is a determining
set for $\pcal(\mathbb T^\varkappa)$ (cf.\ Lemma \ref{ppl}). In what
follows, we regard $\zbb^\varkappa$ as the $*$-semigroup equipped
with coordinatewise defined addition as semigroup operation and
involution $\alpha^*=-\alpha$, $\alpha \in \zbb^\varkappa$. In turn,
$\mathbb T^\varkappa$ is regarded as the multiplicative
$*$-semigroup with coordinatewise defined multiplication as
semigroup operation and involution
    $$
(z_1, \ldots, z_\varkappa)^* = (\bar z_1, \ldots, \bar z_\varkappa),
\quad (z_1, \ldots, z_\varkappa) \in \mathbb T^\varkappa.
    $$
It is easily checked that the dual $*$-semigroup
$\xfr_{\zbb^\varkappa}$ of $\zbb^\varkappa$ can be identified
algebraically with the $*$-semigroup $\mathbb T^\varkappa$ via the
mapping
    \begin{align}
\xfr_{\zbb^\varkappa} \ni \chi \mapsto (\chi(e_1), \ldots,
\chi(e_\varkappa)) \in \mathbb T^\varkappa,
    \end{align}
where $e_j = (\delta_{j,1}, \ldots, \delta_{j,\varkappa}) \in
\zbb^\varkappa$ ($\delta_{i,j}$ is the Kronecker delta function).
Under this identification, $\widehat{\alpha}$ is given by
    \begin{align*}
\widehat{\alpha} (z) = z^\alpha, \quad z = (z_1, \ldots,
z_\varkappa) \in \mathbb T^\varkappa, \, \alpha \in \zbb^\varkappa,
    \end{align*}
which means that the notation $\pcal(\mathbb T^\varkappa)$
introduced at the beginning of this section is consistent with that
for $*$-semigroups in Section \ref{s1}, and that $Y$ is a
determining subset of $\xfr_{\zbb^\varkappa}$. It is well known that
the $*$-semigroup $\zbb^\varkappa$ is operator semiperfect (e.g.\
see \cite{mas} and footnote \ref{fut}). Put $T =
\zbb_\pluss^\varkappa \cup \zbb_\minuss^\varkappa$ and define
$\varPhi\colon T \to \scal(\hh)$ via
    \begin{align*}
\varPhi(\alpha) (f,g) = \is{\boldsymbol A^{[\alpha]} f} g, \quad f,g
\in \hh,\, \alpha\in T.
    \end{align*}
In view of Lemma \ref{*dyl} and \cite[Theorem 3]{st-sz2}, the
current condition (i) is equivalent to condition (i) of Theorem
\ref{main2} with $\sfr=\zbb^\varkappa$ and $\dcal=\hh$. In turn, the
current condition (ii) is a counterpart of condition (iii) of
Theorem \ref{main2}. Finally, by Lemma \ref{factor}, the current
condition (iii) is a counterpart of condition (iv) of Theorem
\ref{main2}, because if \eqref{spe+} holds for some matrix
$[q_{i,l}]_{i=1}^m{}_{l=1}^k$ with entries in $\pcal(\mathbb
T^\varkappa)$, then there exists $n \in \zbb_\pluss$ such that all
the trigonometric polynomials
    \begin{align}
\tilde q_{i,l}(z_1, \dots, z_\varkappa) \okr z_1^n\dots
z_\varkappa^n \, q_{i,l}(z_1, \dots, z_\varkappa), \quad z_1,
\ldots, z_\varkappa \in \mathbb T,
    \end{align}
are analytic and \eqref{spe+} is valid with $[\tilde
q_{i,l}]_{i=1}^m{}_{l=1}^k$ in place of
$[q_{i,l}]_{i=1}^m{}_{l=1}^k$. Hence, applying Theorem \ref{main2}
completes the proof.
    \end{proof}
   \begin{rem} The implication (iii)$\Rightarrow$(ii) of
Theorem \ref{nagy} can also be deduced from \cite[Corollary
5.2]{dr}. Indeed, if \eqref{drit} is valid, then for every real
$\varepsilon > 0$, the square-matrix-valued trigonometric polynomial
$Q^{(\varepsilon)} (\lambda) \okr \sum_{\alpha \in \zbb_+^\varkappa
\cup \zbb_-^\varkappa} Q_\alpha \lambda^\alpha + \varepsilon I_m$,
where $Q_\alpha = [a_\alpha^{i,j}]_{i,j=1}^m$ and $I_m =
[\delta_{i,j}]_{i,j=1}^m$, is strictly positive on $\mathbb
T^\varkappa$. By \cite[Corollary 5.2]{dr}, the polynomial
$Q^{(\varepsilon)}$ has a factorization by an analytic (in general
non-square) matrix-valued trigonometric polynomial. This implies
that the polynomial $Q^{(\varepsilon)}$ takes the form which is
required in \eqref{spe+} (use the trick\footnote{\;This additional effort
comes from the fact that Dritschel's factorization
$F(\lambda)^*F(\lambda)$ differs from the factorization $P(\lambda)
P(\lambda)^*$ required in \eqref{spe+} by the location of the
asterisk.} from the proof of Lemma \ref{ppl}). Hence, by (iii), we have
    \begin{align*}
\sum_{i,j=1}^m \sum_{\alpha \in \zbb_+^\varkappa \cup
\zbb_-^\varkappa} a_\alpha^{i,j} \is{\boldsymbol A^{[\alpha]} f_i}
{f_j} + \varepsilon \sum_{i=1}^m \|f_i\|^2 \Ge 0, \quad f_1, \ldots,
f_m \in \hh,\, \varepsilon > 0.
   \end{align*}
Passing with $\varepsilon$ to $0$ completes the proof.
    \end{rem}
   \subsection{\label{conf}The truncated multidimensional trigonometric moment
problem}
   The problem in the title goes back to Krein's theorem
\cite{krein} on extending positive definite functions
from an interval to $\rbb$. A several dimensional
version of this is not true according to examples of
Calder\'on and Pepinsky \cite{cal-pep} (the additive
group $\zbb^\varkappa$) and Rudin \cite{rud1} (the
additive groups $\zbb^\varkappa$ and
$\rbb^\varkappa$). In this section we concentrate on
the discrete case. Our result, which is related to
\cite[Corollary 4]{st-sz2}, deals with the truncated
multidimensional trigonometric moment problem on
symmetric subsets of $\zbb^\varkappa$.

It was proved in \cite{cal-pep}, and independently in
\cite{rud1}, that a finite nonempty subset $\varLambda$ of
$\zbb^\varkappa$ has the extension property (i.e.\ each
complex function on $\varLambda - \varLambda$ which is
positive definite on $\varLambda$ extends to a positive
definite complex function on $\zbb^\varkappa$) if and only
if each nonnegative trigonometric polynomial in
$\pcal_{\varLambda - \varLambda}(\mathbb T^\varkappa)$ is
equal to a finite sum of squares of moduli of trigonometric
polynomials in $\pcal_\varLambda(\mathbb T^\varkappa)$.
Recently, Gabardo found new conditions under which
$\varLambda$ has or fails to have the extension property
(cf.\ \cite{ga3}; see also \cite{ga4} for
$\varLambda$-determinacy and \cite{ga1,ga2} for related
questions). Clearly, if $\varLambda$ has the extension
property, then $\mathscr P_{\varLambda -
\varLambda}^+(\mathbb T^\varkappa) = \varSigma_{\varLambda
- \varLambda}^2(\mathbb T^\varkappa)$ (see Section
\ref{appr} for notation). Note that each {\em difference
set} $T \subset \zbb^\varkappa$, i.e.\ a set of the form
$\varLambda - \varLambda$ with some nonempty $\varLambda
\subset \rbb^\varkappa$, has the property $0 \in T = -T$,
which is required in Theorem \ref{trmpr} below. However,
not every set $T \subset \zbb^\varkappa$ satisfying $0 \in
T = -T$ is a difference set, which can be seen even for
$\varkappa=1$. Namely, one can show that for all integers
$k,n$ such that $1 \Le k < n-k <n$ (necessarily $n \Ge 3$
and $k < n/2$) any subset $T$ of $\zbb$ fulfilling the
following conditions
   \begin{enumerate}
   \item[(i)] $0 \in T = -T$,
   \item[(ii)] $T \subset \{j \in \zbb\colon |j| \Le
   n\}$,
   \item[(iii)] $n-k, n \in T$,
   \item[(iv)] $k \notin T$ and $j \notin T$ for every integer $j$
such that $n-k < j < n$,
   \end{enumerate}
is not a difference set (hint:\ replace $\varLambda$ by
$\varLambda - \min \varLambda$). The cardinality of such
sets $T$ may (and does) vary between $5$ and $2(n-k) + 1$.
There is a simple way of producing multidimensional
variants of non-difference sets from one-dimensional ones.
Indeed, if $T_1$ is a non-difference subset of $\zbb$ and
$T^\prime$ is any subset of $\zbb^\varkappa$, then $T_1
\times T^\prime$ is a non-difference subset of
$\zbb^{\varkappa+1}$ (hint:\ $P(\varLambda - \varLambda) =
P(\varLambda) - P(\varLambda)$, where $P(n,\alpha) = n$ for
$n \in \zbb$ and $\alpha \in \zbb^\varkappa$). Of course,
there are non-difference sets which cannot be obtained this
way, e.g. $\varkappa = 2$ and $T= \{(0,0),
(1,0),(-1,0),(1,1),(-1,-1)\}$. Summing up, our solutions of
the truncated trigonometric moment problem given in Theorem
\ref{trmpr} below allow for much more general truncations,
even in the case of finite data $T$. Surprisingly, the
infinite set $\zbb_\pluss^\varkappa \cup
\zbb_\minuss^\varkappa$, playing a pivotal role in Section
\ref{sekl}, is a difference set. Indeed, if $\zbb_\pluss
\ni n \mapsto \alpha_n \in \zbb_\pluss^\varkappa$ is any
surjection with $\alpha_0 = 0$, then $\zbb_\pluss^\varkappa \cup
\zbb_\minuss^\varkappa = \varLambda - \varLambda$ with
   \begin{align*}
\varLambda=\{\alpha_0 + \ldots + \alpha_n\colon n \in
\zbb_\pluss\}.
   \end{align*}
Another (more explicit) choice of $\varLambda$ for $\varkappa=2$ is
illustrated in Figure 2.

\vspace{1,5ex}
   \begin{center}
\unitlength=1.5pt
\begin{picture}(80,80)
\put(0,10){\vector(1,0){80}}

\put(10,0){\vector(0,1){80}}

\put(76,4){\text{$m$}} \put(4,77){\text{$n$}}

\put(10,10){\circle*{2}} \put(20,10){\circle*{2}}
\put(14,24){\text{\footnotesize $(1,1)$}}
\put(20,20){\circle*{2}} \put(30,20){\circle*{2}}
\put(40,20){\circle*{2}} \put(40,30){\circle*{2}}
\put(34,44){\text{\footnotesize $(3,3)$}}
\put(40,40){\circle*{2}} \put(50,40){\circle*{2}}
\put(60,40){\circle*{2}} \put(70,40){\circle*{2}}
\put(70,50){\circle*{2}} \put(70,60){\circle*{2}}
\put(64,74){\text{\footnotesize $(6,6)$}}
\put(70,70){\circle*{2}} \put(80,70){\circle*{2}}
\end{picture}\\[1.5ex]
{\small Figure 2. An example of $\varLambda$ such that
$\zbb_\pluss^2\cup\zbb_\minuss^2 = \varLambda-\varLambda$.}
\end{center}

\vspace{1,5ex}
   We now go back to solving the truncated trigonometric
moment problem.
   \begin{thm} \label{trmpr}
Assume that $T$ is a subset of $\zbb^\varkappa$ such
that\,\footnote{\;\label{expl}This is the explicit form of
condition \eqref{sym_diag} under the circumstances of the
$*$-semigroup $\zbb^\varkappa$.} $0 \in T = -T$, and $Y$ is
a determining set for $\pcal_\pluss(\mathbb T^\varkappa)$.
If $\{c_\alpha\}_{\alpha \in T}$ is a sequence of complex
numbers, then the following conditions are equivalent{\em
:}
    \begin{enumerate}
    \item[(i)] there exists a finite positive Borel measure
$\mu$ on $\mathbb T^\varkappa$ such that
    \begin{align}
c_\alpha = \int_{\mathbb T^\varkappa} z^\alpha \D \mu(z), \quad
\alpha \in T,
    \end{align}
   \item[(ii)] $\sum_{\alpha \in T} a_\alpha c_\alpha
\Ge 0$ for every finite system $\{a_\alpha\}_{\alpha \in T}$ of
complex numbers such that $\sum_{\alpha \in T} a_\alpha z^\alpha \Ge
0$ for all $z \in \mathbb T^\varkappa$,
   \item[(iii)] $\sum_{\alpha \in T} a_\alpha c_\alpha
\Ge 0$ for every finite system $\{a_\alpha\}_{\alpha \in T}$ of
complex numbers for which there exist finitely many analytic
trigonometric polynomials $q_1, \ldots, q_k \in \pcal_\pluss(\mathbb
T^\varkappa)$ such that $\sum_{\alpha \in T} a_\alpha z^\alpha =
\sum_{j=1}^k |q_j(z)|^2$ for all $z \in Y$.
    \end{enumerate}
    \end{thm}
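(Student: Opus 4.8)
The plan is to recognize Theorem \ref{trmpr} as a direct specialization of Theorem \ref{main2+} to the $*$-semigroup $\sfr = \zbb^\varkappa$, exactly as Theorem \ref{zespmom} was obtained for $\nfr_\pluss$ and as the proof of Theorem \ref{nagy} sets up the operator-valued case. First I would install the $*$-semigroup structure on $\zbb^\varkappa$ (coordinatewise addition, involution $\alpha^* = -\alpha$) and recall from the proof of Theorem \ref{nagy} that its dual $*$-semigroup $\xfr_{\zbb^\varkappa}$ is identified algebraically with $\mathbb T^\varkappa$, under which $\widehat{\alpha}(z) = z^\alpha$. Since a trigonometric polynomial vanishes on $\mathbb T^\varkappa$ iff all its coefficients vanish, the whole dual $\xfr_{\zbb^\varkappa} = \mathbb T^\varkappa$ is determining, and by Lemma \ref{ppl} the hypothesis that $Y$ is determining for $\pcal_\pluss(\mathbb T^\varkappa)$ makes $Y$ a determining subset of $\xfr_{\zbb^\varkappa}$ in the sense of Section \ref{s1}. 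Under the stated identification, $\pcal_T(\mathbb T^\varkappa)$ coincides with $\pcal_T(\xfr_{\zbb^\varkappa})$, and $\pcal_T(Y)$ is its restriction to $Y$.

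Next I would verify the two structural hypotheses of Theorem \ref{main2+}. The condition $0 \in T = -T$ is precisely \eqref{sym_diag} for $\zbb^\varkappa$: symmetricity is $T = T^*$, and since every element is Hermitian exactly when $\alpha = -\alpha$, i.e.\ $\alpha = 0$, we have $(\zbb^\varkappa)_{\mathfrak h} = \{0\} \subset T$ (this is footnote \ref{expl}). Operator semiperfectness of $\zbb^\varkappa$ is standard --- it is a unital commutative inverse (indeed group) $*$-semigroup, and I would cite the classical fact that discrete abelian groups are semiperfect (as done in the proof of Theorem \ref{nagy} via \cite{mas} and footnote \ref{fut}). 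Defining $\varphi\colon T \to \cbb$ by $\varphi(\alpha) = c_\alpha$, the associated functional $\varLambda_\varphi$ satisfies $\varLambda_\varphi(\hat\alpha) = c_\alpha$, so the representing measure in condition (i) is exactly the measure in condition (ii) of Theorem \ref{main2+} via $\hat\alpha(z) = z^\alpha$.

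With this dictionary in place the proof is a translation: condition (i) of Theorem \ref{trmpr} is condition (ii) of Theorem \ref{main2+}; condition (ii) of Theorem \ref{trmpr} is condition (iii) of Theorem \ref{main2+} (nonnegativity of $\varLambda_\varphi$ on all $p \in \pcal_T(\xfr_\sfr)$ that are pointwise nonnegative on $\mathbb T^\varkappa$); and condition (iii) of Theorem \ref{trmpr} is condition (iv) of Theorem \ref{main2+}, once I check via Lemma \ref{factor} that the sum-of-squares form $\sum_{j=1}^k |q_j|^2$ with $q_j \in \pcal_\pluss(\mathbb T^\varkappa)$ matches the condition ``$p(\chi) = \sum_{j=1}^n |q_j(\chi)|^2$ with $q_j \in \pcal(Y)$'' appearing there. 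The one nontrivial point, and the step I expect to demand care, is reconciling the analytic constraint $q_j \in \pcal_\pluss(\mathbb T^\varkappa)$ with the unrestricted $q_j \in \pcal(Y)$ allowed in Theorem \ref{main2+}(iv): I would use the multiplication-by-$z_1^n\cdots z_\varkappa^n$ device from Lemma \ref{ppl} (and used in the proof of Theorem \ref{nagy}) to pass between arbitrary and analytic trigonometric polynomials without changing the modulus $|q_j|^2$, so the two sum-of-squares classes coincide on $\mathbb T^\varkappa$ and hence on $Y$. After this reconciliation, invoking Theorem \ref{main2+} with $\sfr = \zbb^\varkappa$, $Y$ as given, and $\varphi(\alpha) = c_\alpha$ yields the equivalence of (i), (ii) and (iii), completing the proof.
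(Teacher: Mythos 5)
Your proposal is correct and coincides with the paper's own argument: the paper's proof of Theorem \ref{trmpr} is literally ``argue essentially as in the proof of Theorem \ref{nagy} using Theorem \ref{main2+} instead of Theorem \ref{main2}'', which is precisely the specialization you carry out, including the identification of $\xfr_{\zbb^\varkappa}$ with $\mathbb T^\varkappa$, the reading of \eqref{sym_diag} as $0\in T=-T$, the appeal to semiperfectness of $\zbb^\varkappa$, and the multiplication-by-$z_1^n\cdots z_\varkappa^n$ device from Lemma \ref{ppl} to reconcile analytic with arbitrary trigonometric polynomials in the sum-of-squares condition. No gaps.
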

    \begin{proof}
We can argue essentially as in the proof of Theorem \ref{nagy} using
Theorem \ref{main2+} instead of Theorem \ref{main2}.
    \end{proof}
It is worth mentioning that Proposition \ref{sumk} can be easily
adapted to the present context. We say that a locally convex
topology $\tau$ on the linear space $\pcal_\pm(\mathbb T^\varkappa)$
(cf.\ Section \ref{sekl}) is {\em evaluable} if the set of all
points $\lambda \in \mathbb T^\varkappa$ for which the evaluation
    $$
\pcal_\pm(\mathbb T^\varkappa) \ni p \mapsto p(\lambda) \in \cbb
    $$
is $\tau$-continuous is dense in $\mathbb
T^\varkappa$. Given $Y \subset \mathbb T^\varkappa$,
we denote by $\varSigma^2(Y)$ the set of all
trigonometric polynomials $q \in \pcal_\pm(\mathbb
T^\varkappa)$ for which there exist finitely many
analytic trigonometric polynomials $q_1, \ldots, q_n
\in \pcal_\pluss(\mathbb T^\varkappa)$ such that
   \begin{align*}
q(z) = \sum_{j=1}^n |q_j(z)|^2, \quad z \in Y.
   \end{align*}
Arguing as in the proof of Proposition \ref{sumk} with
$p_\varepsilon \in \pcal_\pm(\mathbb T^\varkappa)$ given by
    $$
p_\varepsilon (z) =\sum_{j=1}^\varkappa |z_j - \lambda_j|^2
-\varepsilon^2 = 2\varkappa -\varepsilon^2 -\sum_{j=1}^\varkappa
\bar \lambda_j z_j - \sum_{j=1}^\varkappa \lambda_j z_j^{-1}, \quad
z=(z_1, \ldots, z_\varkappa) \in \mathbb T^\varkappa,
    $$
where $(\lambda_1, \ldots, \lambda_\varkappa) \in \mathbb
T^\varkappa \setminus Y$, we are led to the ensuing result.
   \begin{pro} \label{sumk+}
Let $Y$ be a closed proper subset of $\mathbb T^\varkappa$
which is determining for $\pcal_\pluss(\mathbb
T^\varkappa)$ and let $\tau$ be an evaluable locally convex
topology on $\pcal_\pm(\mathbb T^\varkappa)$. Then there
exists a trigonometric polynomial $p \in \pcal_\pm(\mathbb
T^\varkappa)$ which is nonnegative on $Y$ and which does
not belong to the $\tau$-closure of $\varSigma^2(Y)$. In
particular, $p$ is not in $\varSigma^2(Y)$.
   \end{pro}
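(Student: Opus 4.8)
The plan is to transcribe the proof of Proposition \ref{sumk} verbatim, replacing $\cbb$ by $\mathbb T^\varkappa$, the ring $\cbb[z,\bar z]$ by $\pcal(\mathbb T^\varkappa)$, and the family of evaluations $\{E_\lambda\}$ by the evaluations $E_\lambda\colon p\mapsto p(\lambda)$ on $\pcal_\pm(\mathbb T^\varkappa)$. First I would exploit evaluability together with the properness of $Y$. Since $Y$ is closed and proper, the complement $\mathbb T^\varkappa\setminus Y$ is open and nonempty, and because the set of points at which the evaluation is $\tau$-continuous is dense in $\mathbb T^\varkappa$, I can choose $\lambda=(\lambda_1,\ldots,\lambda_\varkappa)\in\mathbb T^\varkappa\setminus Y$ for which $E_\lambda$ is $\tau$-continuous.

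Second, I would show that $\varSigma^2(Y)=\varSigma^2(\mathbb T^\varkappa)$, which is the step where the determining hypothesis enters. If $q\in\pcal_\pm(\mathbb T^\varkappa)$ agrees with $\sum_{j=1}^n|q_j|^2$ on $Y$ for some analytic $q_1,\ldots,q_n\in\pcal_\pluss(\mathbb T^\varkappa)$, then $q-\sum_{j=1}^n|q_j|^2$ is a member of $\pcal(\mathbb T^\varkappa)$ vanishing on $Y$; since $Y$ is determining for $\pcal_\pluss(\mathbb T^\varkappa)$, hence for $\pcal(\mathbb T^\varkappa)$ by Lemma \ref{ppl}, this difference vanishes on all of $\mathbb T^\varkappa$. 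In particular every $q\in\varSigma^2(Y)$ satisfies $q(z)\Ge 0$ for all $z\in\mathbb T^\varkappa$, so $E_\lambda(q)\Ge 0$. The $\tau$-continuity of $E_\lambda$ then propagates this to the closure, giving $E_\lambda(q)\Ge 0$ for every $q\in\overline{\varSigma^2(Y)}^{\,\tau}$.

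Finally I would produce the separating polynomial. Using $\bar z_j=z_j^{-1}$ on $\mathbb T$, the function $p_\varepsilon(z)=\sum_{j=1}^\varkappa|z_j-\lambda_j|^2-\varepsilon^2=2\varkappa-\varepsilon^2-\sum_{j=1}^\varkappa\bar\lambda_j z_j-\sum_{j=1}^\varkappa\lambda_j z_j^{-1}$ indeed lies in $\pcal_\pm(\mathbb T^\varkappa)$. Since $\lambda\notin Y$ and $Y$ is closed, the distance from $\lambda$ to $Y$ in the metric $z\mapsto\big(\sum_{j=1}^\varkappa|z_j-\lambda_j|^2\big)^{1/2}$ is strictly positive, so for a sufficiently small $\varepsilon>0$ one has $\sum_{j=1}^\varkappa|z_j-\lambda_j|^2>\varepsilon^2$ for all $z\in Y$, i.e. $p_\varepsilon\Ge 0$ on $Y$. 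On the other hand $E_\lambda(p_\varepsilon)=p_\varepsilon(\lambda)=-\varepsilon^2<0$, so by the previous paragraph $p_\varepsilon\notin\overline{\varSigma^2(Y)}^{\,\tau}$, which is the assertion.

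I do not expect a genuine obstacle here, as the argument is the torus analogue of a proof already in hand. The only two points requiring care are the identity $\varSigma^2(Y)=\varSigma^2(\mathbb T^\varkappa)$, where the correct tool is Lemma \ref{ppl} converting $\pcal_\pluss$-determinacy into $\pcal$-determinacy, and the rewriting of $p_\varepsilon$ into the $\pcal_\pm$ form via $\bar z_j=z_j^{-1}$; both are routine once isolated.
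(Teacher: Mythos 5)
Your proposal is correct and coincides with the paper's own argument: the paper proves Proposition \ref{sumk+} precisely by transplanting the proof of Proposition \ref{sumk}, using the same polynomial $p_\varepsilon(z)=\sum_{j=1}^\varkappa|z_j-\lambda_j|^2-\varepsilon^2$ written in the $\pcal_\pm(\mathbb T^\varkappa)$ form via $\bar z_j=z_j^{-1}$, with Lemma \ref{ppl} supplying the passage from $\pcal_\pluss$-determinacy to $\pcal$-determinacy exactly as you indicate. Both of the points you flag as requiring care are handled the same way in the paper, so there is nothing to add.
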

Note that if a trigonometric polynomial $p \in \pcal(\mathbb T) =
\pcal_\pm(\mathbb T)$ is nonnegative on $\mathbb T$, then by the
F\'ejer-Riesz theorem there exists an analytic trigonometric
polynomial $q \in \pcal_\pluss(\mathbb T)$ such that $p(z)=|q(z)|^2$
for all $z \in \mathbb T$.
   \subsection{\label{appr}Approximation}
In this section we intend to apply our approach to
approximating nonnegative polynomials by sums of squares of
moduli of rational functions. The method presented here
could be a fertile source of other approximation results of
this kind.

Let $T$ be a subset of $\zbb_\pluss \times \zbb_\pluss$.
Denote by $\varSigma_T^2(\cbb)$ the set of all polynomials
$q \in \cbb_T[z, \bar z]$ for which there exist finitely
many rational functions $q_1, \ldots, q_k \in
\cbb_{\nfr_+}(z,\bar z)$ such that $q(z, \bar z) =
\sum_{j=1}^k |q_j(z, \bar z)|^2$ for all $z \in \cbb_*$.
Let $\mathscr P_T^+(\cbb)$ stand for the set of all
polynomials $q \in \cbb_T[z,\bar z]$ such that $q(z,\bar z)
\Ge 0$ for all $z\in \cbb$. We shall regard
$\varSigma_T^2(\cbb)$, $\mathscr P_T^+(\cbb)$ and
$\cbb_T[z, \bar z]$ as sets of complex functions on $\cbb$.
Given $p \in \cbb[z, \bar z]$ of the form $p(z,\bar z) =
\sum_{m,n \Ge 0} a_{m,n} z^m \bar z^n$, we set
$\|p\|_{\mathrm{co}} = \max\{|a_{m,n}| \colon m,n \Ge 0\}$.
The function $\|\cdot\|_{\mathrm{co}}$ is a norm on
$\cbb[z, \bar z]$. Recall that the finest locally convex
topology on a linear space $V$ is given by the family of
all seminorms on $V$.
    \begin{pro} \label{dens1}
Let $T$ be a subset of $\zbb_\pluss \times \zbb_\pluss$
such that $(n,m) \in T$ for all $(m,n) \in T$, and $(n,n)
\in T$ for all $n \in \zbb_\pluss$. Then the set
$\varSigma_T^2(\cbb)$ is dense in $\mathscr P_T^+(\cbb)$
with respect to the finest locally convex topology $\tau$
on $\cbb_T[z, \bar z]$. In particular, this is the case for
the topology of uniform convergence on compact subsets of
$\cbb$ and the topology given by the norm
$\|\cdot\|_{\mathrm{co}}$.
    \end{pro}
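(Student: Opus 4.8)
The plan is to deduce the statement from Theorem~\ref{zespmom} by a Hahn--Banach duality argument adapted to the finest locally convex topology. Observe first that both $\varSigma_T^2(\cbb)$ and $\mathscr P_T^+(\cbb)$ are convex cones contained in the real linear space $\mathscr H_T$ of those $q=\sum_{(m,n)\in T} p_{m,n} z^m\bar z^n \in \cbb_T[z,\bar z]$ which are real-valued on $\cbb$ (equivalently $p_{n,m}=\overline{p_{m,n}}$ for all $(m,n)$); moreover $\varSigma_T^2(\cbb)\subset\mathscr P_T^+(\cbb)$, since a finite sum of squares of moduli is nonnegative on $\cbb_*$ and hence, by continuity, on all of $\cbb$. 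Thus it suffices to prove the inclusion $\mathscr P_T^+(\cbb)\subset\overline{\varSigma_T^2(\cbb)}^{\,\tau}$.

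For this I would use that $\tau$, being generated by all seminorms, is locally convex and that every real-linear functional $\ell\colon \cbb_T[z,\bar z]\to\rbb$ is $\tau$-continuous: indeed $\ell=\RE L$ for the complex-linear functional $L(w)=\ell(w)-\I\ell(\I w)$, and every complex-linear functional is $\tau$-continuous because $\tau$ is the finest locally convex topology. By the bipolar theorem for convex cones, $\overline{\varSigma_T^2(\cbb)}^{\,\tau}$ consists exactly of those $q$ on which every real-linear functional $\ell$ that is nonnegative on $\varSigma_T^2(\cbb)$ is itself nonnegative. Hence the whole matter reduces to the implication: if a real-linear $\ell$ satisfies $\ell\Ge 0$ on $\varSigma_T^2(\cbb)$, then $\ell\Ge 0$ on $\mathscr P_T^+(\cbb)$.

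To verify this I would encode $\ell$ on $\mathscr H_T$ by the system $\{c_{m,n}\}_{(m,n)\in T}$ determined by $\ell\big(\sum_{(m,n)\in T} p_{m,n} z^m\bar z^n\big)=\sum_{(m,n)\in T} p_{m,n} c_{m,n}$; since $T$ is symmetric and contains the diagonal, this system is Hermitian ($c_{n,m}=\overline{c_{m,n}}$) and the pairing is real on $\mathscr H_T$. The assumption $\ell\Ge 0$ on $\varSigma_T^2(\cbb)$ then says precisely that $\sum_{(m,n)\in T} p_{m,n} c_{m,n}\Ge 0$ whenever $\sum_{(m,n)\in T} p_{m,n} z^m\bar z^n=\sum_{j=1}^k|q_j(z,\bar z)|^2$ on $\cbb_*$ with $q_1,\dots,q_k\in\cbb_{\nfr_+}(z,\bar z)$, which is condition (v) of Theorem~\ref{zespmom}. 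Because $T$ meets the hypotheses of that theorem, the equivalence (v)$\Leftrightarrow$(iv) yields condition (iv), i.e.\ $\sum_{(m,n)\in T} p_{m,n} c_{m,n}\Ge 0$ for every $\{p_{m,n}\}$ with $\sum_{(m,n)\in T} p_{m,n} z^m\bar z^n\Ge 0$ on $\cbb$. This is exactly $\ell\Ge 0$ on $\mathscr P_T^+(\cbb)$, completing the reduction.

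Finally, the ``in particular'' assertions are immediate: the topology of uniform convergence on compact subsets of $\cbb$ and the topology induced by $\|\cdot\|_{\mathrm{co}}$ are locally convex, hence coarser than the finest locally convex topology $\tau$, and density in $\tau$ forces density in every coarser locally convex topology. The substance of the argument rests entirely on Theorem~\ref{zespmom}; once the equivalence (v)$\Leftrightarrow$(iv) is in hand, the remaining work is the standard cone-separation argument. The only point requiring care is the passage to the real cone picture, namely checking that nonnegativity of $\ell$ on $\varSigma_T^2(\cbb)$ is literally condition (v) for a Hermitian system $\{c_{m,n}\}$, so that Theorem~\ref{zespmom} applies verbatim.
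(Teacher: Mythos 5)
Your proposal is correct and follows essentially the same route as the paper: a Hahn--Banach separation (phrased via the bipolar theorem for cones rather than a direct contradiction with a separating functional) reduces the density claim to showing that any linear functional nonnegative on $\varSigma_T^2(\cbb)$, encoded by a system $\{c_{m,n}\}_{(m,n)\in T}$, is nonnegative on $\mathscr P_T^+(\cbb)$, which is exactly the implication (v)$\Rightarrow$(iv) of Theorem~\ref{zespmom}. The only cosmetic difference is that you work inside the real subspace of Hermitian polynomials while the paper complexifies the separating real-linear functional; both devices serve the same purpose.
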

    \begin{proof}
It only suffices to discuss the case of the finest locally convex topology.
We regard $\cbb[z,\bar z]$ as a $*$-algebra of complex
functions on $\cbb$ with involution $q^*(z,\bar z) =
\overline{q(z,\bar z)}$ for $z \in \cbb$. Since the set $T$
is assumed to have a symmetry property, we see that
$\cbb_T[z,\bar z]$ is a vector subspace of $\cbb[z,\bar z]$
such that $q^* \in \cbb_T[z,\bar z]$ for every $q \in
\cbb_T[z,\bar z]$. It is then clear that
    $$\varSigma_T^2(\cbb) \subset \mathscr P_T^+(\cbb) \subset
\cbb_T[z,\bar z]_{\mathfrak h} \okr \{q \in \cbb_T[z,\bar z] \colon
q=q^*\}.
    $$
Suppose that, contrary to our claim, there exists $q_0 \in \mathscr
P_T^+(\cbb)$ which does not belong to the $\tau$-closure of
$\varSigma_T^2(\cbb)$. By the separation theorem (cf.\ \cite[Theorem
3.4\,(b)]{rud}), there exist a real-linear functional $\tilde
\varLambda\colon \cbb_T[z,\bar z] \to \rbb$ and $\gamma \in \rbb$
such that
    \begin{align} \label{rud}
\tilde \varLambda(q_0) < \gamma \Le \tilde \varLambda(q), \quad q
\in \varSigma_T^2(\cbb).
    \end{align}
Since $\varSigma_T^2(\cbb)$ has the property that $tq \in
\varSigma_T^2(\cbb)$ for all $q\in \varSigma_T^2(\cbb)$ and
$t \in [0,\infty)$, we deduce from \eqref{rud} that
    \begin{align*} 
\tilde\varLambda(q_0) < 0 \Le \tilde\varLambda(q), \quad q
\in \varSigma_T^2(\cbb),
    \end{align*}
Define $\varLambda\colon \cbb_T[z, \bar z] \to \cbb$ by
    \begin{align*}
\varLambda(q) = \tilde\varLambda(\rea\, q) + \I\, \tilde
\varLambda(\ima\, q), \quad q\in \cbb_T[z,\bar z],
    \end{align*}
with $\rea\, q \okr (q+q^*)/2$ and $\ima\, q \okr
(q-q^*)/(2\,\I)$. It is easily seen that $\varLambda$ is a
complex-linear functional such that
$\varLambda(q)=\tilde\varLambda(q)$ for all $q\in
\cbb_T[z,\bar z]_{\mathfrak h}$. Hence, in view of the
inclusion $\{q_0\} \cup \varSigma_T^2(\cbb) \subset
\cbb_T[z,\bar z]_{\mathfrak h}$, we have
    \begin{align} \label{contr}
\varLambda(q_0) < 0 \Le \varLambda(q), \quad q \in
\varSigma_T^2(\cbb).
    \end{align}
Since $\varLambda$ is of the form
    $$ \varLambda(q) = \sum_{(m,n) \in T}
p_{m,n} c_{m,n} \quad \text{for} \quad q \in \cbb_T[z,\bar z]\colon
q(z,\bar z) = \sum_{(m,n)\in T} p_{m,n} z^m \bar z^n,
    $$ where
$\{c_{m,n}\}_{(m,n) \in T}$ is a system of complex numbers uniquely
determined by $\varLambda$, we deduce from the weak inequality in
\eqref{contr} and implication (v)$\Rightarrow$(iv) of Theorem
\ref{zespmom} that $\varLambda(q_0) \Ge 0$, which contradicts the
strict inequality in \eqref{contr}.
    \end{proof}
Consider now the case of $T= \nfr=\zbb_\pluss \times
\zbb_\pluss$. Observe that $\widehat\varSigma^2_\nfr(\cbb)$, the convex cone of all polynomials $q \in \cbb[z,\bar z]$ for which there exist finitely many polynomials $p_1, \ldots, p_n \in \cbb[z,\bar z]$ such that $q(z, \bar z) = \sum_{j=1}^n |p_j(z, \bar z)|^2$ for all $z\in \cbb$, is closed with respect to the finest locally convex topology of $\cbb[z,\bar z]$ (see the proof of \cite[Theorem 6.3.5]{b-ch-r}). Evidently, the convex cone $\mathscr
P_\nfr^+(\cbb)$ is closed with respect to the same topology. It is clear that
   \begin{align}\label{kapelusz}
\widehat\varSigma^2_\nfr(\cbb) \subset \varSigma_\nfr^2(\cbb) \subset \mathscr P_\nfr^+(\cbb).
   \end{align}
Owing to the proof of \cite[Theorem 6.3.5]{b-ch-r}, $\widehat\varSigma^2_\nfr(\cbb)$ is a proper subset of $ \mathscr P_\nfr^+(\cbb)$. Hence, by Proposition \ref{dens1}, the first inclusion in \eqref{kapelusz} is proper as well. The open question whether the second inclusion in \eqref{kapelusz} is proper has already been posed in the paragraph preceding Proposition \ref{sumk}.
    \begin{cor} \label{our}
Let
    \begin{align*}
q(z,\bar z) = \sum_{m,n \Ge 0} a_{m,n} z^m \bar z^n, \quad z \in
\cbb,
    \end{align*}
be a nonnegative polynomial with coefficients in
$\cbb$. Then for every $\varepsilon > 0$, there exists
a nonnegative polynomial $q_\varepsilon \in
\varSigma_\nfr^2(\cbb)$ such that
    \begin{align*}
& q_\varepsilon(z, \bar z) = \sum_{m,n \Ge 0}
a_{m,n}^{(\varepsilon)} z^m \bar z^n, \quad z \in \cbb \quad
\big(a_{m,n}^{(\varepsilon)} \in \cbb\big),
    \\
& |a_{m,n} - a_{m,n}^{(\varepsilon)}| \Le \varepsilon |a_{m,n}|,
\quad m,n \in \zbb_\pluss, m \neq n,
    \\
& |a_{m,n} - a_{m,n}^{(\varepsilon)}| \Le \varepsilon, \quad m,n \in
\zbb_\pluss.
    \end{align*}
    \end{cor}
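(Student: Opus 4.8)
The plan is to deduce Corollary \ref{our} directly from Proposition \ref{dens1} applied to the full index set $T = \nfr = \zbb_\pluss \times \zbb_\pluss$. Observe that this $T$ trivially satisfies the hypotheses of Proposition \ref{dens1}: it is symmetric (since $(n,m) \in \nfr$ whenever $(m,n) \in \nfr$) and it contains the whole diagonal $\{(n,n) \colon n \in \zbb_\pluss\}$. Hence $\varSigma_\nfr^2(\cbb)$ is dense in $\mathscr P_\nfr^+(\cbb)$ with respect to the finest locally convex topology $\tau$ on $\cbb[z,\bar z] = \cbb_\nfr[z,\bar z]$. The task is then to translate the abstract density statement into the concrete coefficient-wise estimates claimed in the corollary.

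First I would record that the given $q$ belongs to $\mathscr P_\nfr^+(\cbb)$, since it is nonnegative on all of $\cbb$. The key step is to choose a suitable $\tau$-neighbourhood of $q$ and intersect it with $\varSigma_\nfr^2(\cbb)$. Because $\tau$ is the finest locally convex topology, it is generated by \emph{all} seminorms on $\cbb[z,\bar z]$; in particular, for the fixed $q$ with coefficients $a_{m,n}$ (only finitely many nonzero, say with $(m,n)$ ranging over a finite set $F$), I would build the seminorm that measures, coordinate by coordinate, the deviation of the coefficients from those of $q$. Concretely, for a polynomial $r(z,\bar z) = \sum_{m,n \Ge 0} b_{m,n} z^m \bar z^n$ define
\begin{align*}
N(r) = \max\Big\{ \max_{\substack{(m,n) \in F \\ m \neq n}} \tfrac{|b_{m,n} - a_{m,n}|}{\varepsilon |a_{m,n}|},\ \max_{(m,n)} \tfrac{|b_{m,n} - a_{m,n}|}{\varepsilon} \Big\},
\end{align*}
interpreting the first block only over those $(m,n) \in F$ with $a_{m,n} \neq 0$. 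After subtracting the constant part contributed by $q$ (so that it becomes a genuine seminorm of $r - q$), this is a seminorm, hence $\tau$-continuous, and the set $\{r \colon N(r-q) < 1\}$ is a $\tau$-neighbourhood of $q$. By the density from Proposition \ref{dens1}, this neighbourhood meets $\varSigma_\nfr^2(\cbb)$; any $q_\varepsilon$ in the intersection has coefficients $a_{m,n}^{(\varepsilon)}$ satisfying exactly the two displayed inequalities of the corollary, and $q_\varepsilon$ is nonnegative since members of $\varSigma_\nfr^2(\cbb)$ are sums of squares of moduli.

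The main obstacle — really the only delicate point — is the bookkeeping that packages the two distinct coefficient bounds (the relative bound $\varepsilon |a_{m,n}|$ for off-diagonal entries and the absolute bound $\varepsilon$ everywhere) into a single $\tau$-continuous seminorm, while handling the degenerate cases correctly: indices $(m,n)$ with $a_{m,n} = 0$ must be governed only by the absolute bound, and one must ensure the neighbourhood constrains \emph{all} coefficients of $q_\varepsilon$, not merely those in the support of $q$, so that the absolute bound $|a_{m,n} - a_{m,n}^{(\varepsilon)}| \Le \varepsilon$ holds for every $(m,n) \in \zbb_\pluss \times \zbb_\pluss$ (with $a_{m,n} = 0$ outside $F$). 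Since $q_\varepsilon$ has only finitely many nonzero coefficients, this is a finite bookkeeping matter and the seminorm is well defined; the finest-topology hypothesis is precisely what guarantees that such a tailored seminorm is continuous. Assembling these observations completes the proof.
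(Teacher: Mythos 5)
Your overall strategy (Proposition \ref{dens1} plus a tailored seminorm, continuous because the topology is the finest locally convex one) is the right starting point, but applying the proposition with $T=\nfr$ leaves a genuine gap. The relative bound $|a_{m,n}-a_{m,n}^{(\varepsilon)}|\Le\varepsilon|a_{m,n}|$ is asserted for \emph{all} off-diagonal $(m,n)$, including those with $a_{m,n}=0$; at such indices it forces $a_{m,n}^{(\varepsilon)}=0$ exactly. You explicitly exempt these indices from the relative bound (``governed only by the absolute bound''), so your $q_\varepsilon$ is only guaranteed to have off-diagonal coefficients of modulus $\Le\varepsilon$ outside the support of $q$, not zero --- which is weaker than what the corollary claims. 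Moreover this cannot be repaired within your framework: a seminorm is finite-valued, so its open unit ball is absorbing and cannot impose the exact linear constraint $a_{m,n}^{(\varepsilon)}=0$ on an open neighbourhood; density of $\varSigma_\nfr^2(\cbb)$ in $\mathscr P_\nfr^+(\cbb)$ gives no control over the support of the approximant.

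The paper closes exactly this gap by a different choice of $T$: it takes $T=\{(m,n)\in\zbb_\pluss\times\zbb_\pluss\colon a_{m,n}\neq 0\}\cup\{(n,n)\colon n\in\zbb_\pluss\}$, which is symmetric because $q$ is nonnegative (hence real-valued, so $a_{n,m}=\overline{a_{m,n}}$), and applies Proposition \ref{dens1} to this $T$ with the single norm $\|\cdot\|_{\mathrm{co}}$ and the threshold $\tilde\varepsilon=\varepsilon\cdot\min\bigl\{1,\min\{|a_{m,n}|\colon a_{m,n}\neq0\}\bigr\}$. The approximant then lies in $\varSigma_T^2(\cbb)\subset\cbb_T[z,\bar z]\cap\varSigma_\nfr^2(\cbb)$, so its off-diagonal coefficients vanish automatically wherever those of $q$ do, and the uniform bound $\tilde\varepsilon$ yields both displayed inequalities at the remaining indices; no bespoke seminorm is needed. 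If you replace your $T=\nfr$ by this restricted $T$, your argument goes through and essentially coincides with the paper's.
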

    \begin{proof}
It is enough to consider the case of $q\neq 0$. Since the polynomial
$q$ is nonnegative, we deduce that the set
    $$
T = \{(m,n) \in \zbb_\pluss \times \zbb_\pluss \colon a_{m,n} \neq
0\} \cup \{(n,n) \colon n \in \zbb_\pluss\}
    $$
satisfies the assumptions of Proposition \ref{dens1}, and $q \in
\mathscr P_T^+(\cbb)$. By this proposition, there exists
$q_\varepsilon \in \varSigma_T^2(\cbb)$ such that $\|q -
q_\varepsilon\|_{\mathrm{co}} \Le \tilde\varepsilon$, where
    $$
\tilde\varepsilon = \varepsilon \cdot \min\Big\{1,
\min\big\{|a_{m,n}|\colon m,n \Ge 0, \, a_{m,n} \neq 0\big\}\Big\}
>0.
    $$
This completes the proof.
    \end{proof}
We now discuss the case of trigonometric polynomials (see
Section \ref{sekl} for notation). Let $T$ be a subset of
$\zbb^\varkappa$ and let $\varSigma_T^2(\mathbb
T^\varkappa)$ stand for the set of all trigonometric
polynomials $q \in \pcal_T(\mathbb T^\varkappa)$ for which
there exist finitely many analytic trigonometric
polynomials $q_1, \ldots, q_k \in \pcal_\pluss(\mathbb
T^\varkappa)$ such that $q(z) = \sum_{j=1}^k |q_j(z)|^2$
for all $z \in \mathbb T^\varkappa$. Denote by $\mathscr
P_T^+(\mathbb T^\varkappa)$ the set of all trigonometric
polynomials $q \in \pcal_T(\mathbb T^\varkappa)$ such that
$q(z) \Ge 0$ for all $z\in \mathbb T^\varkappa$. Given $p
\in \pcal(\mathbb T^\varkappa)$ of the form \eqref{for1},
we set $\|p\|_{\mathrm{co}}^\prime = \max\{|a_\alpha|\colon
\alpha \in \zbb^\varkappa\}$. It is clear that
$\|\cdot\|_{\mathrm{co}}^\prime$ is a norm on
$\pcal(\mathbb T^\varkappa)$.

    Arguing as in the proof of Proposition \ref{dens1} (using
Theorem \ref{trmpr} instead of Theorem \ref{zespmom}), we get
another approximation result.
    \begin{pro} \label{dens2}
Let $T$ be a subset of $\zbb^\varkappa$ such that $0\in T=-T$. Then
the set $\varSigma_T^2(\mathbb T^\varkappa)$ is dense in $\mathscr
P_T^+(\mathbb T^\varkappa)$ with respect to the finest locally
convex topology on $\pcal_T(\mathbb T^\varkappa)$. In particular,
this is the case for the topology of uniform convergence on $\mathbb
T^\varkappa$ and the topology given by the norm
$\|\cdot\|_{\mathrm{co}}^\prime$.
    \end{pro}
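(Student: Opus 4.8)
The plan is to adapt, almost verbatim, the duality argument used for Proposition \ref{dens1}, replacing the appeal to Theorem \ref{zespmom} by the analogous implication of Theorem \ref{trmpr}. First I would observe that it suffices to treat the finest locally convex topology $\tau$ on $\pcal_T(\mathbb T^\varkappa)$: being the strongest of all locally convex topologies, density with respect to $\tau$ entails density with respect to every coarser one, in particular the topology of uniform convergence on $\mathbb T^\varkappa$ and the one induced by $\|\cdot\|_{\mathrm{co}}^\prime$, which settles the ``in particular'' clause.

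Next I would set up the algebraic framework. Regard $\pcal(\mathbb T^\varkappa)$ as a $*$-algebra of complex functions on $\mathbb T^\varkappa$ with involution $q^*(z) = \overline{q(z)}$; since $\overline{z^\alpha} = z^{-\alpha}$ on $\mathbb T^\varkappa$, the hypothesis $T=-T$ guarantees that $\pcal_T(\mathbb T^\varkappa)$ is a $*$-invariant linear subspace, while $0 \in T$ ensures it contains the constants. Writing $\pcal_T(\mathbb T^\varkappa)_{\mathfrak h} \okr \{q \in \pcal_T(\mathbb T^\varkappa) \colon q = q^*\}$ for the real subspace of Hermitian elements, one has the chain of inclusions
\[
\varSigma_T^2(\mathbb T^\varkappa) \subset \mathscr P_T^+(\mathbb T^\varkappa) \subset \pcal_T(\mathbb T^\varkappa)_{\mathfrak h},
\]
and $\varSigma_T^2(\mathbb T^\varkappa)$ is a convex cone (closed under addition and under multiplication by nonnegative reals).

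The core step is the separation argument. Suppose, for contradiction, that some $q_0 \in \mathscr P_T^+(\mathbb T^\varkappa)$ lies outside the $\tau$-closure of $\varSigma_T^2(\mathbb T^\varkappa)$. By the separation theorem (cf.\ \cite[Theorem 3.4\,(b)]{rud}) there exist a real-linear functional $\tilde\varLambda$ and $\gamma \in \rbb$ with $\tilde\varLambda(q_0) < \gamma \Le \tilde\varLambda(q)$ for all $q \in \varSigma_T^2(\mathbb T^\varkappa)$; the cone property upgrades this to $\tilde\varLambda(q_0) < 0 \Le \tilde\varLambda(q)$. I would then complexify $\tilde\varLambda$ exactly as in Proposition \ref{dens1}, putting $\varLambda(q) = \tilde\varLambda(\rea\, q) + \I\, \tilde\varLambda(\ima\, q)$, so that $\varLambda$ is complex-linear and agrees with $\tilde\varLambda$ on $\pcal_T(\mathbb T^\varkappa)_{\mathfrak h}$. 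Expressing $\varLambda$ in the form $\varLambda(q) = \sum_{\alpha \in T} a_\alpha c_\alpha$ for $q = \sum_{\alpha \in T} a_\alpha z^\alpha$ then identifies a system $\{c_\alpha\}_{\alpha \in T}$ of complex numbers.

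Finally I would invoke Theorem \ref{trmpr} with $Y = \mathbb T^\varkappa$, which is trivially determining for $\pcal_\pluss(\mathbb T^\varkappa)$. The inequality $\varLambda \Ge 0$ on $\varSigma_T^2(\mathbb T^\varkappa)$ is precisely condition (iii) of that theorem for $\{c_\alpha\}_{\alpha \in T}$, so implication (iii)$\Rightarrow$(ii) yields $\varLambda \Ge 0$ on $\mathscr P_T^+(\mathbb T^\varkappa)$; in particular $\varLambda(q_0) \Ge 0$, contradicting $\varLambda(q_0) < 0$. I expect the only delicate point to be the bookkeeping: verifying that the sum-of-squares condition defining $\varSigma_T^2(\mathbb T^\varkappa)$ matches condition (iii) of Theorem \ref{trmpr} when $Y$ is the full torus (both require analytic factors $q_j \in \pcal_\pluss(\mathbb T^\varkappa)$), and that the passage from the separating real-linear functional to the complex functional $\varLambda$ preserves the inequalities on the Hermitian cone. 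Everything else is a routine transcription of the proof of Proposition \ref{dens1}.
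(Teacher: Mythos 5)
Your proposal is correct and follows essentially the same route as the paper, which itself disposes of Proposition \ref{dens2} by saying one should argue as in the proof of Proposition \ref{dens1} with Theorem \ref{trmpr} replacing Theorem \ref{zespmom}; your expansion of the separation argument, the complexification of $\tilde\varLambda$, and the appeal to implication (iii)$\Rightarrow$(ii) of Theorem \ref{trmpr} with $Y=\mathbb T^\varkappa$ is exactly the intended transcription.
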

    \begin{cor} \label{Drit}
Let
    \begin{align*}
q(z) = \sum_{\alpha \in \zbb^\varkappa} a_\alpha z^\alpha, \quad z
\in \mathbb T^\varkappa,
    \end{align*}
be a nonnegative trigonometric polynomial with
coefficients in $\cbb$. Then for every $\varepsilon >
0$, there exists a nonnegative trigonometric
polynomial $q_\varepsilon \in
\varSigma_{\zbb^\varkappa}^2(\mathbb T^\varkappa)$
such that
    \begin{align*}
& q_\varepsilon(z) = \sum_{\alpha \in \zbb^\varkappa}
a_\alpha^{(\varepsilon)} z^\alpha, \quad z \in \mathbb T^\varkappa
\quad \big(a_\alpha^{(\varepsilon)} \in \cbb\big),
    \\
&|a_\alpha - a_\alpha^{(\varepsilon)}| \Le \varepsilon |a_\alpha|,
\quad \alpha \in \zbb^\varkappa \setminus \{(0, \ldots,0)\},
   \end{align*} and
   \begin{align} |a_\alpha - a_\alpha^{(\varepsilon)}| \Le \varepsilon ,
\quad \alpha \in \zbb^\varkappa. \label{excep}
    \end{align}
    \end{cor}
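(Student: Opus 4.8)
The plan is to follow verbatim the scheme used for Corollary \ref{our}, replacing the polynomial approximation of Proposition \ref{dens1} by its trigonometric counterpart, Proposition \ref{dens2}. First I would dispose of the trivial case $q=0$ and assume $q\neq 0$. The decisive preliminary observation is that, since $q$ is nonnegative (hence real-valued) on $\mathbb T^\varkappa$, its coefficients obey the symmetry $a_{-\alpha}=\overline{a_\alpha}$, $\alpha\in\zbb^\varkappa$; consequently the support $\{\alpha\in\zbb^\varkappa\colon a_\alpha\neq 0\}$ is invariant under $\alpha\mapsto-\alpha$. I would therefore set
\begin{align*}
T=\{\alpha\in\zbb^\varkappa\colon a_\alpha\neq 0\}\cup\{0\},
\end{align*}
which satisfies $0\in T=-T$, so that Proposition \ref{dens2} applies; by construction $q\in\pcal_T(\mathbb T^\varkappa)$ with $q\Ge 0$, i.e.\ $q\in\mathscr P_T^+(\mathbb T^\varkappa)$.

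Next I would invoke Proposition \ref{dens2} with the norm $\|\cdot\|_{\mathrm{co}}^\prime$: the set $\varSigma_T^2(\mathbb T^\varkappa)$ being dense in $\mathscr P_T^+(\mathbb T^\varkappa)$ for this norm, there exists $q_\varepsilon\in\varSigma_T^2(\mathbb T^\varkappa)\subset\varSigma_{\zbb^\varkappa}^2(\mathbb T^\varkappa)$ with $\|q-q_\varepsilon\|_{\mathrm{co}}^\prime\Le\tilde\varepsilon$, where
\begin{align*}
\tilde\varepsilon=\varepsilon\cdot\min\Big\{1,\ \min\big\{|a_\alpha|\colon\alpha\in\zbb^\varkappa,\ a_\alpha\neq 0\big\}\Big\}>0.
\end{align*}
Writing $q_\varepsilon(z)=\sum_{\alpha}a_\alpha^{(\varepsilon)}z^\alpha$, the bound on the norm reads $|a_\alpha-a_\alpha^{(\varepsilon)}|\Le\tilde\varepsilon$ for every $\alpha$, which at once gives \eqref{excep} because $\tilde\varepsilon\Le\varepsilon$. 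Moreover $q_\varepsilon$ is nonnegative, since $\varSigma_T^2(\mathbb T^\varkappa)\subset\mathscr P_T^+(\mathbb T^\varkappa)$.

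Finally I would verify the relative estimate $|a_\alpha-a_\alpha^{(\varepsilon)}|\Le\varepsilon|a_\alpha|$ for $\alpha\neq 0$, splitting into two cases. If $a_\alpha\neq 0$, then $\tilde\varepsilon\Le\varepsilon|a_\alpha|$ by the very choice of $\tilde\varepsilon$, so the estimate follows from the norm bound. The only point that needs a word is the case $a_\alpha=0$ with $\alpha\neq 0$: here $\alpha\notin T$, and since $q_\varepsilon\in\pcal_T(\mathbb T^\varkappa)$ we have $a_\alpha^{(\varepsilon)}=0$ as well, so both sides vanish. Thus the containment $q_\varepsilon\in\pcal_T(\mathbb T^\varkappa)$ --- rather than a mere smallness estimate --- is what makes the \emph{relative} bound survive off the support of $q$; this is the one place where the argument is not purely a metric estimate, though it is hardly an obstacle. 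No genuine difficulty arises, the entire content being already packaged in Proposition \ref{dens2}.
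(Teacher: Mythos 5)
Your proof is correct and follows essentially the same route as the paper: choose $T=\{\alpha\colon a_\alpha\neq 0\}\cup\{0\}$ (symmetric because $q$ is real-valued), apply Proposition \ref{dens2} with the norm $\|\cdot\|_{\mathrm{co}}^\prime$ and the same threshold $\tilde\varepsilon$, and read off the two coefficient estimates. You merely make explicit two points the paper leaves tacit, namely the symmetry $a_{-\alpha}=\overline{a_\alpha}$ and the fact that $q_\varepsilon\in\pcal_T(\mathbb T^\varkappa)$ forces $a_\alpha^{(\varepsilon)}=0$ off the support of $q$, which is what saves the relative bound there.
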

    \begin{proof}
It is enough to consider the case of $q\neq 0$. Since the
trigonometric polynomial $q$ is nonnegative, we see that the set $T
= \{\alpha \in \zbb^\varkappa \colon a_\alpha \neq 0\} \cup \{(0,
\ldots,0)\}$ satisfies the assumptions of Proposition \ref{dens2},
and $q \in \mathscr P_T^+(\mathbb T^\varkappa)$. By this
proposition, there exists $q_\varepsilon \in \varSigma_T^2(\mathbb
T^\varkappa)$ such that $\|q - q_\varepsilon\|_{\mathrm{co}}^\prime
\Le \tilde\varepsilon$, where
    $$
\tilde\varepsilon = \varepsilon \cdot \min\Big\{1,
\min\big\{|a_\alpha|\colon \alpha \in \zbb^\varkappa, \, a_\alpha
\neq 0\big\}\Big\} >0.
    $$
This completes the proof.
    \end{proof}
For the case of $T=\zbb^\varkappa$ it is well known that $\mathscr
P_{\zbb^\varkappa}^+(\mathbb T^\varkappa) \setminus
\varSigma_{\zbb^\varkappa}^2(\mathbb T^\varkappa) \neq
\varnothing$ if $\varkappa \Ge 2$ (cf.\ \cite[page 51]{sa}).
Recently Dritschel proved that each (strictly) positive
trigonometric polynomial in $\pcal(\mathbb T^\varkappa)$ belongs
to $\varSigma_{\zbb^\varkappa}^2(\mathbb T^\varkappa)$ (see
\cite[Corollary 5.2]{dr} where operator valued trigonometric
polynomials are considered; see also \cite{g-w} for related
questions concerning scalar trigonometric polynomials in two
variables). The proof of Dritschel's result is based on his
Theorem 5.1 which in the scalar case coincides with the major part
of our Corollary \ref{Drit} (except for \eqref{excep}).
   \subsection{\label{twosided} The truncated two-sided complex moment problem}
Let $\zfr$ stand for the $*$-semigroup $(\zbb \times \zbb,
+, *)$ with coordinatewise defined addition as semigroup
operation, and involution $(m,n)^*=(n,m)$. Given a subset
$T$ of $\zfr$, we denote by $\varSigma_T^2(\cbb_*)$ the set
of all rational functions $q \in \cbb_T(z, \bar z)$ for
which there exist finitely many rational functions $q_1,
\ldots, q_k \in \cbb_\zfr (z,\bar z)$ such that $q(z, \bar
z) = \sum_{j=1}^k |q_j(z, \bar z)|^2$ for all $z \in
\cbb_*$. Let $\mathscr P_T^+(\cbb_*)$ stand for the set of
all rational functions $q \in \cbb_T(z,\bar z)$ such that
$q(z,\bar z) \Ge 0$ for all $z\in \cbb_*$. We regard
$\varSigma_T^2(\cbb_*)$, $\mathscr P_T^+(\cbb_*)$ and
$\cbb_T(z, \bar z)$ as sets of complex functions on
$\cbb_*$. Given $p \in \cbb_\zfr(z, \bar z)$ of the form
$p(z,\bar z) = \sum_{m,n \in \zbb} a_{m,n} z^m \bar z^n$,
we set $\|p\|_{\mathrm{co}} = \max\{|a_{m,n}| \colon m,n
\in \zbb\}$. The function $\|\cdot\|_{\mathrm{co}}$ is a
norm on $\cbb_\zfr(z, \bar z)$.

It was proved by Bisgaard (cf.\ \cite{bis1}) that the
$*$-semigroup $\zfr$ is semiperfect (in fact, as noted in
\cite[Theorem 24]{st-sz1}, $\zfr$ is operator semiperfect).
This result enables us to apply our machinery. Below, we
formulate counterparts of Theorem \ref{zespmom} (a shorter
version), Proposition \ref{dens1} and Corollary \ref{our}.
Their proofs are analogical.
   \begin{thm} \label{zespmom+}
Let $T$ be a symmetric subset of $\zfr$ $($i.e.\ $(n,m)\in
T$ for all $(m,n) \in T$$)$ containing the
diagonal\,\footnote{\;Again, as noticed in footnote
\ref{expl}, this is the form condition \eqref{sym_diag}
takes now.} $\{(n,n)\colon n \in \zbb\}$, and let $Z\subset
\cbb_*$ be a determining set for\/ $\cbb[z,\bar z]$. Then
for any system of complex numbers $\{c_{m,n}\}_{(m,n)\in
T}$, the following conditions are equivalent{\em :}
   \begin{enumerate}
   \item[(i)] there exists a positive Borel measure $\mu$ on
$\cbb_*$ such that
   \begin{align*}
   c_{m,n} = \int_{\cbb_*} z^m \bar z^n \mu(\D z), \quad
(m,n) \in T,
   \end{align*}
   \item[(ii)] $\sum_{(m,n)\in T} p_{m,n} c_{m,n}
\Ge 0$ for every finite system $\{ p_{m,n} \}_{(m,n) \in T}
\subset \cbb$ such that $\sum_{(m,n)\in T} p_{m,n} z^m \bar
z^n \Ge 0$ for all $z \in \cbb_*$,
   \item[(iii)] $\sum_{(m,n)\in T} p_{m,n} c_{m,n}
\Ge 0$ for every finite system $\{ p_{m,n} \}_{(m,n) \in T}
\subset \cbb$ for which there exist finitely many rational
functions $q_1, \ldots, q_k \in \cbb_{\zfr}(z,\bar z)$ such
that $\sum_{(m,n)\in T} p_{m,n} z^m \bar z^n = \sum_{j=1}^k
|q_j(z, \bar z)|^2$ for all $z \in Z$.
   \end{enumerate}
   \end{thm}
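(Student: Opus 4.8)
The plan is to follow the template already established for Theorem~\ref{zespmom}, reducing the truncated two-sided complex moment problem to the general machinery of Theorem~\ref{main2+} applied to the $*$-semigroup $\zfr$. The authors themselves announce that the proof is ``analogical'', so my task is to identify precisely which pieces from the proof of Theorem~\ref{zespmom} carry over and where the two-sided nature of $\zfr$ (as opposed to $\nfr_\pluss$) simplifies or complicates matters. The crucial structural input, recorded in the paragraph preceding the statement, is that $\zfr$ is operator semiperfect (hence semiperfect) by \cite{bis1} and \cite[Theorem~24]{st-sz1}; this is the analogue of the semiperfectness of $\nfr_\pluss$ used in the earlier theorem, and it is what lets us invoke Theorem~\ref{main2+}.

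First I would set up the dictionary between $\zfr$ and rational functions on $\cbb_*$. Identifying the dual $*$-semigroup $\xfr_\zfr$ with $\cbb_*$ via $\chi \mapsto \chi(1,0)$, one gets $\widehat{(m,n)}(z) = z^m \bar z^n$ for $(m,n) \in \zbb \times \zbb$ and $z \in \cbb_*$, so that $\pcal_T(\xfr_\zfr)$ becomes $\cbb_T(z,\bar z)$ and $\pcal(\xfr_\zfr) = \cbb_\zfr(z,\bar z)$. The hypotheses on $T$ --- symmetric and containing the diagonal $\{(n,n)\colon n \in \zbb\}$ --- are exactly condition \eqref{sym_diag} for the $*$-semigroup $\zfr$, since here $\zfr_{\mathfrak h} = \{(n,n)\colon n \in \zbb\}$. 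The equivalence (i)$\Leftrightarrow$(ii) is then the Riesz--Haviland-type equivalence (i)$\Leftrightarrow$(iii) of Theorem~\ref{main2+} with $\sfr = \zfr$, $Y = \xfr_\zfr$, and $\varphi(m,n) = c_{m,n}$; condition (ii) here says precisely that $\varLambda_\varphi(p) \Ge 0$ for every $p \in \cbb_T(z,\bar z)$ nonnegative on $\cbb_*$. The equivalence (ii)$\Leftrightarrow$(iii) with $Z$ in place of $\cbb_*$ is the transfer from $Y = \xfr_\zfr$ to a determining subset $Y_Z$, using condition (iv) of Theorem~\ref{main2+}; here Lemma~\ref{factor} gives the sum-of-squares description $p = \sum_j |q_j|^2$ of the positive-definite cone, with the $q_j$ in $\cbb_\zfr(z,\bar z)$.

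The step requiring the most care is the descent from the determining set $Z$ back to all of $\cbb_*$, i.e.\ justifying that checking the sum-of-squares identity $\sum_{(m,n)\in T} p_{m,n} z^m \bar z^n = \sum_{j=1}^k |q_j(z,\bar z)|^2$ on $Z$ suffices for it to hold on $\cbb_*$. In Theorem~\ref{zespmom} this used Lemma~\ref{determ} together with Lemma~\ref{1} to pass from $Z$ to $\cbb_*$ for members of $\cbb_{\nfr_+}(z,\bar z)$, exploiting that both sides lie in the boundedness-controlled class $\cbb_{\nfr_+}(z,\bar z)$. In the two-sided setting the rational functions $q_j$ live in the larger class $\cbb_\zfr(z,\bar z)$ and one does not have the punctured-disc boundedness of Proposition~\ref{ogrwym} to lean on; the products $z^m\bar z^n$ with $m+n$ unrestricted need not be bounded. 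Consequently the reduction must rest directly on the $\cbb[z,\bar z]$-determining property of $Z$: multiplying through by a suitable factor $z^N \bar z^N$ (as in the proof of Lemma~\ref{determ}) to clear denominators turns the identity into one between genuine polynomials in $z$ and $\bar z$ agreeing on $Z$, whence Lemma~\ref{1} forces equality on all of $\cbb$, and dividing back gives agreement on $\cbb_*$. This is the main obstacle, and I expect it to be handled exactly by this clearing-of-denominators trick combined with Lemma~\ref{1}.

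Having assembled these pieces, the proof closes by invoking Theorem~\ref{main2+}: operator semiperfectness of $\zfr$ validates the implication (iv)$\Rightarrow$(i) of that theorem, yielding the representing measure $\mu$ on $\cbb_* = \xfr_\zfr$ in condition (i) here, while the reverse implications (i)$\Rightarrow$(ii)$\Rightarrow$(iii) are immediate (integrating a nonnegative, resp.\ sum-of-squares, function against a positive measure is nonnegative). Thus the whole argument is, as advertised, a faithful transcription of the proof of Theorem~\ref{zespmom}, with $\nfr_\pluss$ replaced by $\zfr$ and the boundedness class $\cbb_{\nfr_+}(z,\bar z)$ replaced by $\cbb_\zfr(z,\bar z)$, the only genuinely new point being that the $Z$-to-$\cbb_*$ reduction can no longer appeal to Proposition~\ref{ogrwym} and must instead be carried out purely through the determining property and Lemma~\ref{1}.
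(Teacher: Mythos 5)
Your proposal is correct and follows exactly the route the paper intends: the paper gives no separate proof of Theorem \ref{zespmom+}, declaring it ``analogical'' to that of Theorem \ref{zespmom}, i.e.\ a specialization of Theorem \ref{main2+} to the semiperfect $*$-semigroup $\zfr$ with $\xfr_\zfr\cong\cbb_*$ and $Y=Z$. The only remark worth adding is that the $Z$-to-$\cbb_*$ descent you single out as the delicate step is already packaged in the hypothesis via Lemma \ref{determ} (whose proof is precisely your clearing-of-denominators argument combined with Lemma \ref{1}), since it shows that a $\cbb[z,\bar z]$-determining $Z\subset\cbb_*$ is a determining subset of $\xfr_\zfr$, which is all that the direct application of condition (iv) of Theorem \ref{main2+} requires.
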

    \begin{pro} \label{dens1+}
Let $T$ be as in Theorem {\em \ref{zespmom+}}. Then the set
$\varSigma_T^2(\cbb_*)$ is dense in $\mathscr
P_T^+(\cbb_*)$ with respect to the finest locally convex
topology on $\cbb_T(z, \bar z)$. In particular, this
is the case for the topology of uniform convergence on
compact subsets of $\cbb_*$ and the topology given by the
norm $\|\cdot\|_{\mathrm{co}}$.
    \end{pro}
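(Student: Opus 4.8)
\textbf{Proof proposal for Proposition \ref{dens1+}.}

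The plan is to mimic the proof of Proposition \ref{dens1} verbatim, substituting the two-sided data structure for the one appearing there. First I would fix the $*$-algebra of complex functions on $\cbb_*$ with involution $q^*(z,\bar z) = \overline{q(z,\bar z)}$; since $T$ is symmetric by the standing hypothesis from Theorem \ref{zespmom+}, the space $\cbb_T(z,\bar z)$ is a vector subspace of $\cbb_\zfr(z,\bar z)$ stable under this involution, and one records the inclusions
   \begin{align*}
\varSigma_T^2(\cbb_*) \subset \mathscr P_T^+(\cbb_*) \subset \cbb_T(z,\bar z)_{\mathfrak h} \okr \{q \in \cbb_T(z,\bar z) \colon q=q^*\}.
   \end{align*}
Arguing by contradiction, I suppose there is $q_0 \in \mathscr P_T^+(\cbb_*)$ lying outside the $\tau$-closure of $\varSigma_T^2(\cbb_*)$, where $\tau$ is the finest locally convex topology. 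As in Proposition \ref{dens1}, it suffices to treat this finest topology since every other locally convex topology is coarser.

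Next I would invoke the separation theorem (\cite[Theorem 3.4\,(b)]{rud}) to produce a real-linear functional $\tilde\varLambda$ and a constant $\gamma$ separating $q_0$ from the convex cone $\varSigma_T^2(\cbb_*)$; exactly as before, the cone property $tq \in \varSigma_T^2(\cbb_*)$ for $t \in [0,\infty)$ forces the constant down to $0$, giving $\tilde\varLambda(q_0) < 0 \Le \tilde\varLambda(q)$ for all $q \in \varSigma_T^2(\cbb_*)$. I then complexify $\tilde\varLambda$ to a complex-linear functional $\varLambda$ via $\varLambda(q) = \tilde\varLambda(\rea\,q) + \I\,\tilde\varLambda(\ima\,q)$, with $\rea\,q = (q+q^*)/2$ and $\ima\,q = (q-q^*)/(2\,\I)$, observing that $\varLambda$ agrees with $\tilde\varLambda$ on the Hermitian part, so that $\varLambda(q_0) < 0 \Le \varLambda(q)$ for all $q \in \varSigma_T^2(\cbb_*)$. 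Writing $\varLambda$ in coordinates as $\varLambda(q) = \sum_{(m,n)\in T} p_{m,n} c_{m,n}$ for $q(z,\bar z) = \sum_{(m,n)\in T} p_{m,n} z^m \bar z^n$ defines a uniquely determined system $\{c_{m,n}\}_{(m,n)\in T}$.

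The crucial step is the final contradiction, where the proof of Proposition \ref{dens1} used the implication (v)$\Rightarrow$(iv) of Theorem \ref{zespmom}. Here I would instead apply the implication (iii)$\Rightarrow$(ii) of Theorem \ref{zespmom+}: the weak inequality $\varLambda(q) \Ge 0$ on $\varSigma_T^2(\cbb_*)$ says precisely that the system $\{c_{m,n}\}$ satisfies condition (iii) of Theorem \ref{zespmom+} (with $Z = \cbb_*$, which is determining for $\cbb[z,\bar z]$), whence it satisfies condition (ii), so $\varLambda(q_0) \Ge 0$ for every $q_0 \in \mathscr P_T^+(\cbb_*)$ — contradicting $\varLambda(q_0) < 0$. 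The main obstacle I anticipate is purely bookkeeping rather than conceptual: one must confirm that the sum-square representation in the definition of $\varSigma_T^2(\cbb_*)$ (using functions from $\cbb_\zfr(z,\bar z)$) matches exactly the positivity hypothesis in condition (iii) of Theorem \ref{zespmom+}, and that the role of the determining set is correctly discharged by taking $Z=\cbb_*$; since both sides are members of $\cbb_\zfr(z,\bar z)$, the identity principle (Lemma \ref{1}) guarantees that agreement on $\cbb_*$ is genuine. With these identifications in place the argument closes verbatim.
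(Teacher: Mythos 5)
Your proposal is correct and is precisely the proof the paper intends: the authors state that Proposition \ref{dens1+} is proved "analogically" to Proposition \ref{dens1}, i.e.\ by running the same separation-theorem argument with the implication (iii)$\Rightarrow$(ii) of Theorem \ref{zespmom+} (taking $Z=\cbb_*$, which is determining by Lemma \ref{niepust}) in place of (v)$\Rightarrow$(iv) of Theorem \ref{zespmom}. The only superfluous remark is the appeal to the identity principle at the end — with $Z=\cbb_*$ the sum-square condition in (iii) already coincides verbatim with the definition of $\varSigma_T^2(\cbb_*)$, so nothing needs to be transferred.
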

    \begin{cor} \label{our+}
Let
    \begin{align*}
q(z,\bar z) = \sum_{m,n \in \zbb} a_{m,n} z^m \bar z^n,
\quad z \in \cbb_*,
    \end{align*}
be a nonnegative rational function with coefficients in
$\cbb$. Then for every $\varepsilon
> 0$, there exists a nonnegative rational function
$q_\varepsilon \in \varSigma_\zfr^2(\cbb_*)$ such that
    \begin{align*}
& q_\varepsilon(z, \bar z) = \sum_{m,n \in \zbb}
a_{m,n}^{(\varepsilon)} z^m \bar z^n, \quad z \in \cbb_*
\quad \big(a_{m,n}^{(\varepsilon)} \in \cbb\big),
    \\
& |a_{m,n} - a_{m,n}^{(\varepsilon)}| \Le \varepsilon
|a_{m,n}|, \quad m,n \in \zbb, m \neq n,
    \\
& |a_{m,n} - a_{m,n}^{(\varepsilon)}| \Le \varepsilon,
\quad m,n \in \zbb.
    \end{align*}
    \end{cor}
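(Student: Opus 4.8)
The plan is to run, almost word for word, the argument that established Corollary~\ref{our}, with Proposition~\ref{dens1+} taking over the role played there by Proposition~\ref{dens1}. First I would dispose of the trivial case $q=0$ by taking $q_\varepsilon=0$, so from now on assume $q\neq0$. Since $q$ is nonnegative on $\cbb_*$ it is in particular real-valued, so $q=q^*$ and hence $a_{m,n}=\overline{a_{n,m}}$ for all $m,n\in\zbb$. Consequently the index set
\[
T \okr \{(m,n)\in\zfr \colon a_{m,n}\neq 0\} \cup \{(n,n)\colon n\in\zbb\}
\]
is symmetric and contains the diagonal $\{(n,n)\colon n\in\zbb\}$; that is, $T$ meets exactly the hypotheses imposed in Theorem~\ref{zespmom+}, and therefore those of Proposition~\ref{dens1+}. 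Moreover $q\in\mathscr P_T^+(\cbb_*)$, because the nonzero coefficients of $q$ are indexed by $T$ and $q\Ge0$ on $\cbb_*$.

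Next I would apply Proposition~\ref{dens1+} with the topology on $\cbb_T(z,\bar z)$ given by the norm $\|\cdot\|_{\mathrm{co}}$. Since $\varSigma_T^2(\cbb_*)$ is dense in $\mathscr P_T^+(\cbb_*)$ in this topology, there exists $q_\varepsilon\in\varSigma_T^2(\cbb_*)\subset\varSigma_\zfr^2(\cbb_*)$ with $\|q-q_\varepsilon\|_{\mathrm{co}}\Le\tilde\varepsilon$, where
\[
\tilde\varepsilon \okr \varepsilon\cdot\min\Big\{1,\ \min\big\{|a_{m,n}|\colon m,n\in\zbb,\ a_{m,n}\neq 0\big\}\Big\}.
\]
Being a sum of squares of moduli, such a $q_\varepsilon$ is automatically nonnegative on $\cbb_*$, as required. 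Because $q$ has only finitely many nonzero coefficients, the inner minimum is taken over a finite nonempty set and is strictly positive, so $\tilde\varepsilon>0$.

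Finally I would read off the two stated estimates from the coordinatewise bound $|a_{m,n}-a_{m,n}^{(\varepsilon)}|\Le\|q-q_\varepsilon\|_{\mathrm{co}}\Le\tilde\varepsilon$. The inequality $\tilde\varepsilon\Le\varepsilon$ gives $|a_{m,n}-a_{m,n}^{(\varepsilon)}|\Le\varepsilon$ for all $m,n\in\zbb$. For the relative estimate with $m\neq n$ there are two cases: if $(m,n)\in T$ then $a_{m,n}\neq0$ and $\tilde\varepsilon\Le\varepsilon|a_{m,n}|$; if $(m,n)\notin T$ then $a_{m,n}=0$ and, since $q_\varepsilon\in\cbb_T(z,\bar z)$, also $a_{m,n}^{(\varepsilon)}=0$, so the inequality is trivial. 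I expect no genuine obstacle: the entire analytic content is carried by Proposition~\ref{dens1+}, and the only point demanding a word of care is that the infinite diagonal contained in $T$ does not threaten the finiteness of the minimum defining $\tilde\varepsilon$ — this is guaranteed by $q$ being a genuine, finitely supported, member of $\cbb_\zfr(z,\bar z)$.
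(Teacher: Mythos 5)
Your proof is correct and is essentially the paper's own argument: the authors prove Corollary \ref{our+} exactly by transplanting the proof of Corollary \ref{our}, i.e.\ by forming $T=\{(m,n)\in\zfr\colon a_{m,n}\neq0\}\cup\{(n,n)\colon n\in\zbb\}$ and invoking Proposition \ref{dens1+} with the $\|\cdot\|_{\mathrm{co}}$-topology and the same $\tilde\varepsilon$. Your extra remarks (symmetry of $T$ via $a_{m,n}=\overline{a_{n,m}}$, vanishing of $a_{m,n}^{(\varepsilon)}$ off $T$, positivity of the minimum) are exactly the details the paper leaves implicit.
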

Yet another proof of Proposition \ref{dens1+} consists in
deriving it from Proposition \ref{dens1}. In turn,
Corollary \ref{our+} can be deduced directly from
Proposition \ref{dens1+}. However, Theorem \ref{zespmom+}
does not seem to be an easy consequence of Theorem
\ref{zespmom}.
   \subsection{Concluding remarks}
A result of Bisgaard (cf.\ \cite{bis1,bis5}) states
that the Cartesian product of two $*$-semigroups one
of which is (operator) perfect and the other
(operator) semiperfect is (operator) semiperfect. This
fact can be applied to $*$-semigroups $\zbb^\varkappa$
(operator perfect) and $\nfr_\pluss$ (operator
semiperfect). Employing our method in this particular
context we obtain appropriate results for mixed
polynomials
    $$
\sum_{\alpha \in \zbb^\varkappa} \sum_{m,n \Ge 0} a_{\alpha,m,n}
w^\alpha z^m \bar z^n, \quad w \in \mathbb T^\varkappa, \, z \in
\cbb.
    $$
Fortunately, there is a variety of $*$-semigroups which are
semiperfect or operator semiperfect, and for which we can
find convenient description of their dual $*$-semigroups
and Laplace transforms (see e.g.\
\cite{b-ch-r,bis-re,na-sa,js2,bis,bis3,ni-sa,fu-sa,st-sz3}).

As we now show, a finitely generated $*$-semigroup with
unit can be modelled as a quotient of the $*$-semigroup
$\zbb_\pluss^\varkappa \times \zbb_\pluss^\varkappa$, which
gives rise to yet another way of describing its dual.
Suppose that a commutative $*$-semigroup $\sfr$ with unit
is finitely generated. Then there exists a finite system
$\ebf= (e_1, \ldots,e_\varkappa)$ of elements of $\sfr$
such that the mapping
   \begin{align*}
\varPhi=\varPhi_\ebf \colon \zbb_\pluss^\varkappa \times
\zbb_\pluss^\varkappa \ni (\alpha,\beta) \mapsto
\ebf^\alpha \ebf^{*\beta} \in \sfr
   \end{align*}
is a surjection, where $\ebf^\alpha = e_1^{\alpha_1} \dots
e_\varkappa^{\alpha_\varkappa}$ and $\ebf^{*\beta} =
e_1^{*\beta_1} \dots e_\varkappa^{*\beta_\varkappa}$ with $\alpha = (\alpha_1, \ldots, \alpha_\varkappa)$ and $\beta = (\beta_1, \ldots, \beta_\varkappa)$. Let
us regard $\nfr_\varkappa \okr \zbb_\pluss^\varkappa \times
\zbb_\pluss^\varkappa$ as a $*$-semigroup with
coordinatewise defined addition as semigroup operation and
involution $(\alpha,\beta)^* = (\beta, \alpha)$ for
$\alpha, \beta \in \zbb_\pluss^\varkappa$. Then $\varPhi$
is a $*$-epimorphism of $*$-semigroups, and consequently
the relation $R=R_\varPhi$ on $\nfr_\varkappa$ defined by
   \begin{align*}
(\alpha,\beta) R (\alpha^\prime,\beta^\prime) \quad
\text{if} \quad \varPhi(\alpha,\beta) =
\varPhi(\alpha^\prime,\beta^\prime)
   \end{align*}
is a congruence. As a consequence, the $*$-semigroup $\sfr$
is $*$-isomorphic to the quotient $*$-semigroup
$\nfr_\varkappa/R$ (consult \cite[Theorem I.1.5]{hun}).
This means that the $*$-semigroups $\nfr_\varkappa/R$,
$\varkappa \Ge 1$, are model $*$-semigroups in the category
of finitely generated $*$-semigroups. We now describe the
dual $*$-semigroup of the model $*$-semigroup
$\nfr_\varkappa/R$. In what follows, we regard
$\cbb^\varkappa$ as a $*$-semigroup with coordinatewise
defined multiplication as semigroup operation and
involution $(z_1, \ldots,z_\varkappa)^* = (\bar z_1,
\ldots, \bar z_\varkappa)$. First, note that the set
   \begin{align*}
\mathfrak F_R = \{z \in \cbb^\varkappa \colon z^\alpha \bar
z^\beta = z^{\alpha^\prime} \bar z^{\beta^\prime} \text{
whenever } (\alpha,\beta) R (\alpha^\prime,\beta^\prime)\}
   \end{align*}
is a $*$-subsemigroup of $\cbb^\varkappa$. It is a matter
of routine to verify that the mapping
   \begin{align} \label{nomore}
\varPsi \colon \mathfrak F_R \to \xfr_{\nfr_\varkappa/R},
\quad \varPsi(z) ([(\alpha, \beta)]_R) = z^\alpha \bar
z^\beta \text{ for } (\alpha, \beta) \in \nfr_\varkappa, \,
z \in \mathfrak F_R,
   \end{align}
where $[(\alpha, \beta)]_R$ is the equivalence class of
$(\alpha, \beta)$ with respect to the relation $R$, is a
$*$-isomorphism of $*$-semigroups (hint:\ the mapping
$\cbb^\varkappa \ni z \mapsto \varphi_z \in
\xfr_{\nfr_\varkappa}$, where $\varphi_z (\alpha, \beta) =
z^\alpha \bar z^\beta$ for $(\alpha, \beta) \in
\nfr_\varkappa$, is a $*$-isomorphism of $*$-semigroups).
Applying the measure transport theorem to \eqref{nomore},
one can rewrite the integrals $\int_{\xfr_\sfr} \hat s \,
\D \mu$ that appear in the conditions (ii) of Theorems
\ref{main2} and \ref{main2+} in the form
$\int_{\mathfrak F_R} z^\alpha \bar z^\beta \, \D \tilde
\mu(z)$ which resembles the solution of the multidimensional
complex moment problem. However, the example of the
$*$-semigroup $\nfr_\pluss$ in Section \ref{s4} shows that
this general approach is not always efficient.
   \section*{\sc Appendix}
First, we recall the Riesz-Haviland theorem which
completely characterizes multidimensional real moment
sequences (cf.\ \cite{hav1,hav2}; see also \cite{mr}
for the earlier solution of the one-dimensional real
moment problem). Below $L_{\boldsymbol a}$ is a unique linear
functional defined on the linear space of all complex
polynomials in $\varkappa$ indeterminates given by
$L_{\boldsymbol a}(\xbf^{\boldsymbol n}) =
a_{\boldsymbol n}$, ${\boldsymbol n} \in
\zbb_\pluss^\varkappa$ (with standard multiindex notation). Likewise, $L_{\boldsymbol c}$
appearing in Theorem B is determined by
$L_{\boldsymbol c}(z^m \bar z^n) = c_{m,n}$, $(m,n)
\in \zbb_\pluss^2$.
   \begin{thmA} 
If $\boldsymbol a = \{a_{\boldsymbol n}\}_{\boldsymbol n \in
\zbb_+^\varkappa}$ is a sequence of real numbers and $Z$ is a
closed subset of $\rbb^\varkappa$, then the following conditions
are equivalent{\em :}
   \begin{enumerate}
   \item[(a)] there exists a positive Borel measure $\mu$ on
$\rbb^\varkappa$ such that
   \begin{align*}
   a_{\boldsymbol n} = \int_Z \xbf^{\boldsymbol n} \D \mu(\xbf),
\quad \boldsymbol n \in \zbb_\pluss^\varkappa,
   \end{align*}
   \item[(b)] $L_{\boldsymbol a}(p)\Ge 0$ whenever $p$ is a complex polynomial in $\varkappa$
indeterminates such that $p(\xbf) \Ge 0$ for all $\xbf \in Z$.
   \end{enumerate}
   \end{thmA}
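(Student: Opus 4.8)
The plan is to treat this as the classical Riesz--Haviland theorem and to prove it by combining the M.~Riesz extension theorem with the Riesz--Markov--Kakutani representation theorem. The implication (a)$\Rightarrow$(b) is immediate: writing $p(\xbf) = \sum_{\boldsymbol n} c_{\boldsymbol n}\xbf^{\boldsymbol n}$ one has $L_{\boldsymbol a}(p) = \sum_{\boldsymbol n} c_{\boldsymbol n} a_{\boldsymbol n} = \int_Z p\,\D\mu$, and since $p\Ge 0$ on $Z$ while $\mu$ is carried by $Z$, the integral is nonnegative. All the real work lies in (b)$\Rightarrow$(a).

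For that direction I would first pass to real data: the restriction of $L_{\boldsymbol a}$ to the space $\rbb[\xbf]$ of real polynomials is a real-linear functional that is nonnegative on the convex cone of polynomials which are $\Ge 0$ on $Z$. I then work in the real vector space $V = \rbb[\xbf] + C_c(Z,\rbb)$ of functions on $Z$, equipped with the cone $C = \{f \in V \colon f\Ge 0 \text{ on } Z\}$. Because every $g \in C_c(Z,\rbb)$ is bounded, each $f = q + g \in V$ satisfies $f \Le q + \|g\|_\infty$ on $Z$, with $q + \|g\|_\infty \in \rbb[\xbf]$; hence $\rbb[\xbf]$ majorizes $V$ relative to $C$. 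The M.~Riesz extension theorem then yields a linear functional $\varLambda \colon V \to \rbb$ extending $L_{\boldsymbol a}$ with $\varLambda \Ge 0$ on $C$. Restricting $\varLambda$ to $C_c(Z,\rbb)$ gives a positive functional, and the Riesz--Markov--Kakutani theorem produces a positive Borel measure $\mu$ on $Z$ (extended by $0$ to $\rbb^\varkappa$) with $\varLambda(g) = \int_Z g\,\D\mu$ for all $g \in C_c(Z,\rbb)$.

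The hard part will be to upgrade this to the identity $\int_Z p\,\D\mu = L_{\boldsymbol a}(p)$ for polynomials, i.e.\ to prevent mass from escaping to infinity; this is the main obstacle. Fixing a nonnegative $p \in \rbb[\xbf]$ and Urysohn cutoffs $\phi_R \in C_c(Z,\rbb)$ with $0 \Le \phi_R \uparrow 1$ and $\phi_R = 1$ on $Z \cap \{|\xbf|\Le R\}$, the fact that $p(1-\phi_R) \in C$ gives $\int_Z p\phi_R\,\D\mu = \varLambda(p\phi_R) \Le \varLambda(p)$, whence monotone convergence yields $\int_Z p\,\D\mu \Le \varLambda(p) < \infty$ and the $\mu$-integrability of $p$. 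For the reverse inequality I would exploit the nonnegative polynomial $s = 1 + |\xbf|^2$: on the support of $1-\phi_R$ one has $|\xbf|\Ge R$, so $1 \Le s/R^2$ and therefore $0 \Le p(1-\phi_R) \Le ps/R^2$ on $Z$. Since $ps/R^2 - p(1-\phi_R) \in C$, positivity of $\varLambda$ forces $\varLambda(p(1-\phi_R)) \Le R^{-2}\varLambda(ps) = R^{-2}L_{\boldsymbol a}(ps) \to 0$, and thus $\varLambda(p) = \lim_R \int_Z p\phi_R\,\D\mu = \int_Z p\,\D\mu$.

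Finally I would drop the nonnegativity restriction. Each monomial is dominated in modulus by an even, hence nonnegative, polynomial, so all monomials are $\mu$-integrable; and the identity $p = \tfrac14(1+p)^2 - \tfrac14(1-p)^2$ exhibits every real polynomial as a difference of two (everywhere) nonnegative polynomials. Linearity of both $L_{\boldsymbol a}$ and $\int_Z \cdot\,\D\mu$, together with their agreement on nonnegative polynomials established above, then gives $a_{\boldsymbol n} = L_{\boldsymbol a}(\xbf^{\boldsymbol n}) = \int_Z \xbf^{\boldsymbol n}\,\D\mu$ for every $\boldsymbol n \in \zbb_\pluss^\varkappa$, which is exactly condition (a).
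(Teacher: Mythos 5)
The paper does not actually prove Theorem A: it is recalled in the Appendix as the classical Riesz--Haviland theorem, with references to Haviland \cite{hav1,hav2} and M.~Riesz \cite{mr}, and is then used as a black box (for instance to derive Theorem B via Proposition \ref{dar}). Your proposal therefore supplies a proof where the paper supplies none. The argument you give --- the M.~Riesz extension theorem applied to $\rbb[\xbf]+C_c(Z,\rbb)$ with the cone of functions nonnegative on $Z$, then Riesz--Markov--Kakutani on the $C_c$ part, and the tightness estimate $0\Le p(1-\phi_R)\Le p\,(1+|\xbf|^2)/R^2$ to prevent mass from escaping to infinity --- is the standard route and is correct; the majorization check for the extension theorem, the monotone-convergence step giving $\int_Z p\,\D\mu\Le\varLambda(p)$, and the final reduction of signed polynomials via $p=\tfrac14(1+p)^2-\tfrac14(1-p)^2$ all work as stated. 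The one point you should make explicit is that $L_{\boldsymbol a}$ descends to the space of \emph{restrictions} $\rbb[\xbf]|_Z$, on which your subspace of $V$ is actually modelled: if $p$ vanishes on $Z$ then $\pm p\Ge 0$ on $Z$, so (b) forces $L_{\boldsymbol a}(p)=0$. Without this remark the functional is not literally well defined on the subspace (think of $Z$ finite); with this one line added, your proof is complete and self-contained, which is arguably more than the paper offers for this statement.
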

    Next, we formulate the complex variant of the
Riesz-Haviland theorem. It can be deduced from Theorem A
(with $\varkappa=2$) in an elementary way as indicated below.
   \begin{thmB} 
If $\boldsymbol c= \{c_{m,n}\}_{m,n=0}^\infty$ is a sequence of
complex numbers and $Z$ is a closed subset of $\cbb$, then the
following conditions are equivalent{\em :}
   \begin{enumerate}
   \item[(a)] there exists a positive Borel measure $\mu$ on
$\cbb$ such that
   \begin{align*}
   c_{m,n} = \int_{Z} z^m \bar z^n \mu(\D z), \quad m,n \Ge 0,
   \end{align*}
   \item[(b)] $L_{\boldsymbol c}(p)\Ge 0$ whenever $p$ is a complex polynomial in
$z$ and $\bar z$ such that $p(z,\bar z) \Ge 0$ for all $z \in Z$.
   \end{enumerate}
   \end{thmB}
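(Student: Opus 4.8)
The plan is to derive Theorem B from Theorem A by a reduction that encodes the two real coordinates $x = \RE z$ and $y = \IM z$ as a single complex variable. Concretely, I would identify $\cbb$ with $\rbb^2$ via $z = x + \I y$, so that a closed subset $Z \subset \cbb$ becomes a closed subset of $\rbb^2$, and I would set up a dictionary between complex polynomials in $z,\bar z$ and complex polynomials in the two real indeterminates $x,y$. The substitution $z = x+\I y$, $\bar z = x - \I y$ gives a linear bijection between the monomials $\{z^m \bar z^n\}$ and the monomials $\{x^j y^k\}$ spanning the same degree, and it is this bijection that lets me transport both the positivity hypothesis and the integral representation back and forth.

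First I would make precise the correspondence between the two functionals. Given the complex sequence $\boldsymbol c = \{c_{m,n}\}$, the functional $L_{\boldsymbol c}$ on $\cbb[z,\bar z]$ is already defined by $L_{\boldsymbol c}(z^m \bar z^n) = c_{m,n}$. Using the algebra isomorphism $\cbb[z,\bar z] \cong \cbb[x,y]$ induced by $z = x+\I y$, I obtain a real two-dimensional sequence $\boldsymbol a = \{a_{j,k}\}$ and a functional $L_{\boldsymbol a}$ on $\cbb[x,y]$ with $L_{\boldsymbol a} = L_{\boldsymbol c}$ under the identification. The next step is to check that the positivity condition (b) of Theorem B matches the positivity condition (b) of Theorem A: a polynomial $p(z,\bar z)$ is nonnegative on $Z \subset \cbb$ if and only if the corresponding polynomial $\tilde p(x,y)$ is nonnegative on the image of $Z$ in $\rbb^2$, since nonnegativity is a pointwise condition preserved by the substitution. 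Thus hypothesis (b) of Theorem B is literally hypothesis (b) of Theorem A applied to $\boldsymbol a$ and the closed set $Z \subset \rbb^2$.

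Having matched the hypotheses, I would invoke Theorem A (with $\varkappa = 2$) to obtain a positive Borel measure $\nu$ on $\rbb^2$ representing $\boldsymbol a$, i.e.\ $a_{j,k} = \int_Z x^j y^k \, \D\nu$. Pushing $\nu$ forward under the identification $\rbb^2 \cong \cbb$ yields a positive Borel measure $\mu$ on $\cbb$ supported in $Z$, and I would verify that $c_{m,n} = \int_Z z^m \bar z^n \, \D\mu$ by expanding $z^m \bar z^n$ in terms of the $x^j y^k$ and using linearity together with the already-established representation of the $a_{j,k}$. The converse implication (a)$\Rightarrow$(b) is the routine direction: if such a $\mu$ exists, then for any $p$ nonnegative on $Z$ one has $L_{\boldsymbol c}(p) = \int_Z p \, \D\mu \Ge 0$ by positivity of the integrand and the measure.

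The main obstacle is not conceptual but bookkeeping: one must confirm that the degree-by-degree change of basis between $\{z^m\bar z^n\}$ and $\{x^j y^k\}$ is compatible with the finiteness and summability assumptions so that the integrals converge and the functionals genuinely agree, and that the closed set $Z$ is transported correctly (closedness in $\cbb$ corresponds to closedness in $\rbb^2$ under the homeomorphism $z \mapsto (\RE z, \IM z)$). Since all of these are elementary linear-algebra and measure-transport verifications, the reduction is ``elementary'' exactly as claimed in the text, and the only subtlety worth spelling out is the reality/Hermitian symmetry of the representing data, which is automatic because a representing measure forces $c_{n,m} = \overline{c_{m,n}}$ and hence the associated real sequence $\boldsymbol a$ is genuinely real.
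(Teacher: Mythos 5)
Your proposal is correct and follows essentially the same route as the paper: the paper's Proposition \ref{dar} is exactly your degree-preserving change of basis between $\{z^m\bar z^n\}$ and $\{x^jy^k\}$ (made explicit via the mutually inverse coefficient systems $\alpha_{k,l}^{m,n}$ and $\beta_{k,l}^{m,n}$), after which Theorem A with $\varkappa=2$ is invoked and the measure is transported under $\rbb^2\cong\cbb$. One small correction to your final remark: in the substantive direction (b)$\Rightarrow$(a) you must know that the associated sequence $\boldsymbol a$ is real \emph{before} you may apply Theorem A, and at that stage no representing measure is available, so you cannot derive $c_{n,m}=\overline{c_{m,n}}$ from it; the correct argument (the one the paper uses) is that a functional nonnegative on all polynomials nonnegative on $Z$ is automatically Hermitian --- e.g.\ because every real polynomial $p$ is a difference of squares via $p=\tfrac12\bigl((p+1)^2-p^2-1\bigr)$ --- and hence $\boldsymbol a$ is real.
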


Now we relate
the two-dimensional real moment problem to the
one-dimensional complex moment problem. It will
be done much in the spirit of \cite[Appendix]{st-sz1},
though by more elementary means.
The same proof works in the multidimensional
case.

For every $(m,n) \in \zbb_\pluss^2$, there exist finite
systems $\{ \alpha_{k,l}^{m,n} \}_{k,l=0}^\infty$ and $\{
\beta_{k,l}^{m,n} \}_{k,l=0}^\infty$ of complex numbers
uniquely determined by the following identities
   \begin{align}
(x+\I y)^m (x-\I y)^n & = \sum_{k,l\Ge 0}
\alpha_{k,l}^{m,n} x^k y^l, \quad x,y\in \rbb,\label{id1}
\\
\Big(\frac{z+\bar z}2\Big)^m \Big( \frac{z-\bar z}{2\,\I}
\Big)^n & = \sum_{k,l\Ge 0} \beta_{k,l}^{m,n} z^k \bar
z^l, \quad z \in \cbb. \label{id2}
   \end{align}
It can be readily checked that
   \begin{align}\label{id3}
\sum_{i,j\Ge 0} \alpha_{i,j}^{m,n} \beta_{k,l}^{i,j} =
\sum_{i,j\Ge 0} \beta_{i,j}^{m,n} \alpha_{k,l}^{i,j} =
\delta_{m,k}\delta_{n,l}, \quad k,l,m,n \in \zbb_\pluss.
   \end{align}
Given a sequence $\boldsymbol a =
\{a_{m,n}\}_{m,n=0}^\infty \subset \cbb$, we define
$\boldsymbol c(\boldsymbol a) = \{c_{m,n}(\boldsymbol
a)\}_{m,n=0}^\infty$ via
   \begin{align*}
c_{m,n}(\boldsymbol a) = \sum_{k,l\Ge 0}
\alpha_{k,l}^{m,n} a_{k,l}.
   \end{align*}
By \eqref{id3} the mapping $\cbb^{\zbb_+ \times \zbb_+} \ni
\boldsymbol a \mapsto \boldsymbol c(\boldsymbol a) \in
\cbb^{\zbb_+ \times \zbb_+}$ is a linear isomorphism, and
   \begin{align}\label{id4}
a_{m,n} = \sum_{k,l\Ge 0} \beta_{k,l}^{m,n}
c_{k,l}(\boldsymbol a), \quad m,n \in \zbb_\pluss,
   \end{align}
for all $\boldsymbol a = \{a_{m,n}\}_{m,n=0}^\infty \subset
\cbb$. Below we identify $\cbb$ with $\rbb^2$.
   \begin{pro} \label{dar}
Let $Z$ be a subset of $\cbb$. Then a sequence $\boldsymbol
a = \{a_{m,n}\}_{m,n=0}^\infty \subset \rbb$ satisfies the
condition {\em (a)} $($resp.\ {\em (b)}$)$ of Theorem {\em A} with $\varkappa =2$ if and only if the
sequence $\boldsymbol c(\boldsymbol a)$ satisfies the
condition {\em (a)} $($resp.\ {\em (b)}$)$ of Theorem {\em B}.
   \end{pro}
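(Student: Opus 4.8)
The plan is to establish the equivalence of the two change-of-variable identities by exploiting the linear isomorphism $\boldsymbol a \mapsto \boldsymbol c(\boldsymbol a)$ together with the fact that \eqref{id1} and \eqref{id2} express the same polynomial change of variables between the monomials $x^ky^l$ (i.e.\ real monomials, after identifying $\cbb$ with $\rbb^2$ via $z=x+\I y$) and the monomials $z^m\bar z^n$. The key observation is that under the substitution $z=x+\I y$ we have $x^ky^l = \sum_{i,j\Ge 0}\beta_{i,j}^{k,l} z^i\bar z^j$ by \eqref{id2}, so for a polynomial $p$ the evaluation $L_{\boldsymbol a}(p)$ and $L_{\boldsymbol c(\boldsymbol a)}(p)$ differ only by expressing $p$ in the two coordinate systems. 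I would treat the measure-theoretic part (a) and the functional-positivity part (b) separately.

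For the equivalence of condition (a), I would first note that a positive Borel measure $\mu$ on $\rbb^2$ (equivalently on $\cbb$) yields, for a real sequence $\boldsymbol a$, the representation $a_{m,n}=\int_Z x^m y^n\,\D\mu$ exactly when $\boldsymbol c(\boldsymbol a)$ is represented by the \emph{same} $\mu$ against $z^m\bar z^n$. This follows by integrating the polynomial identity \eqref{id1}: applying $\int_Z\,\D\mu$ to both sides of $(x+\I y)^m(x-\I y)^n=\sum_{k,l}\alpha_{k,l}^{m,n}x^ky^l$ gives
\begin{align*}
\int_Z z^m\bar z^n\,\D\mu = \sum_{k,l\Ge 0}\alpha_{k,l}^{m,n}\int_Z x^ky^l\,\D\mu,
\end{align*}
whose right-hand side equals $c_{m,n}(\boldsymbol a)$ precisely when $a_{k,l}=\int_Z x^ky^l\,\D\mu$. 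Thus the same representing measure works for one problem if and only if it works for the other, which is exactly the asserted equivalence of the conditions (a). Summability of the relevant integrands is automatic here since we only ever integrate fixed polynomials.

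For the equivalence of condition (b), the plan is to show that the two positivity requirements are literally the same requirement transported through the coordinate change. A complex polynomial $p$ in $z,\bar z$ is nonnegative on $Z$ if and only if, viewed as a polynomial $\tilde p$ in $x,y$ via $z=x+\I y$, it is nonnegative on $Z$ (these are the same function). Writing $p(z,\bar z)=\sum_{m,n}p_{m,n}z^m\bar z^n$ and using \eqref{id2} to re-expand each $z^m\bar z^n$ in terms of $x^ky^l$, I would check that $L_{\boldsymbol c(\boldsymbol a)}(p)=L_{\boldsymbol a}(\tilde p)$; the identities \eqref{id3} guarantee that the two re-expansions are mutually inverse, so the pairing is preserved. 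Consequently, "$L_{\boldsymbol c(\boldsymbol a)}(p)\Ge 0$ for every $p$ nonnegative on $Z$" is equivalent to "$L_{\boldsymbol a}(\tilde p)\Ge 0$ for every $\tilde p$ nonnegative on $Z$," which is condition (b) of Theorem A with $\varkappa=2$.

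The main obstacle, and the only point requiring genuine care, is the bookkeeping that the passage $p\mapsto\tilde p$ is a \emph{bijection} of the space of complex polynomials onto itself that preserves nonnegativity on $Z$ and intertwines the two functionals; all of this rests on the mutual-inverse relations \eqref{id3} and on the fact that $z\mapsto(x,y)$ is a bijection of $\cbb$ onto $\rbb^2$. Once the intertwining $L_{\boldsymbol c(\boldsymbol a)}(p)=L_{\boldsymbol a}(\tilde p)$ is verified on monomials (hence, by linearity, on all polynomials), both equivalences follow immediately, and the reality of $\boldsymbol a$ is used only to place ourselves in the setting of Theorem A. I expect the argument to be entirely elementary, with the combinatorial identity \eqref{id3} doing all the real work.
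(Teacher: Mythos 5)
Your proposal is correct and follows essentially the same route as the paper: for (a) you integrate the change-of-basis identities \eqref{id1} and \eqref{id2} against the same measure $\mu$ (using the invertibility encoded in \eqref{id3}/\eqref{id4} for the converse), and for (b) you transport the positivity requirement through the intertwining $L_{\boldsymbol c(\boldsymbol a)}(p)=L_{\boldsymbol a}(\tilde p)$, which is exactly the ``similar calculation'' the paper invokes without writing out.
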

   \begin{proof}
If $a_{m,n} = \int_Z x^m y^n \D \mu(x,y)$ for all
$m,n \in \zbb_\pluss$, then
   \begin{align*}
c_{m,n}(\boldsymbol a) = \sum_{k,l\Ge 0}
\alpha_{k,l}^{m,n} a_{k,l} = \int_Z
\sum_{k,l\Ge 0} \alpha_{k,l}^{m,n} x^k y^l \D \mu(x,y)
\overset{\eqref{id1}}= \int_Z z^m \bar z^n \D \mu(z).
   \end{align*}
Conversely, if $\boldsymbol c(\boldsymbol a)$ is a complex
moment sequence with a representing measure $\mu$ on
$Z$, then
   \begin{align*}
a_{m,n} \overset{\eqref{id4}}= \sum_{k,l\Ge 0}
\beta_{k,l}^{m,n} c_{k,l}(\boldsymbol a) = \int_Z
\sum_{k,l\Ge 0} \beta_{k,l}^{m,n} z^k \bar z^l \D
\mu(z) \overset{\eqref{id2}}= \int_Z x^m y^n \D
\mu(x,y),
   \end{align*}
which completes the proof of the equivalence of both conditions
(a).

Similar calculation based on \eqref{id1}, \eqref{id2} and
\eqref{id4} justifies the equivalence of the conditions
(b).
   \end{proof}
Since the positive functional $L_{\boldsymbol a}$ given by a complex sequence $\boldsymbol a = \{a_{m,n}\}_{m,n=0}^\infty$ is automatically Hermitian (i.e.\ $L_{\boldsymbol a}(\bar p) = \overline{L_{\boldsymbol a}(p)}$), the sequence $\boldsymbol a$ is necessarily real. This allows us to deduce Theorem B from Theorem A and Proposition \ref{dar}.

    \vspace{1ex} {\bf Acknowledgments.} The very early
version of this paper was designated for Kre\u{\i}n's
anniversary volume. However, due to its growing
capacity we have been exceeding all consecutive
deadlines; let us thank, by the way, Professor Vadim
Adamyan for his patience in negotiating them.

    \bibliographystyle{amsalpha}
   
   \end{document}